\newtheorem{definition}{Definition}[section]
\newtheorem{corollary}{Corollary}[section]
\newtheorem{proposition}{Proposition}[section]
\newtheorem{remark}{Remark}[section]
\newtheorem{notation}{Notation}[section]
\newcommand{\df}[1]{\textbf{\textit{#1}}}  
\newcommand{\Tau}{\mathcal{T}} 
\newcommand{\E}{\mathbbmss{E}}
\newcommand{\U}{\mathbbmss{U}}
\newcommand{\PP}{\mathbb{P}} 
\newcommand{\NN}{\mathbb{N}} 
\newcommand{\Ncal}{\mathcal{N}}  
\newcommand{\Acal}{\mathcal{A}}  
\newcommand{\Bcal}{\mathcal{B}}  
\newcommand{\Scal}{\mathcal{S}}  
\newcommand{\SSE}[1][\U]{{\mathcal{SS}(#1)}_\E}  
\newcommand{\SSG}[2]{{\mathcal{SS}(#1)}_{#2}} 
\newcommand{\SPE}[1][\U]{{\mathcal{SP}(#1)}_\E}  
\newcommand{\SPG}[2]{{\mathcal{SP}(#1)}_{#2}} 
\newcommand{\softsubseteq}{\tilde{\subseteq}}  
\newcommand{\softequal}{\tilde{=}}  
\newcommand{\softin}{\tilde{\in}}  
\newcommand{\softnotequal}{\tilde{\ne}}  
\newcommand{\softnotsubseteq}{\tilde{\not\subseteq}}  
\newcommand{\softnotin}{\tilde{\notin}}  
\newcommand{\nullsoftset}{(\tilde{\emptyset},\E)}  
\newcommand{\absolutesoftset}[1][\U]{(\tilde{#1},\E)}  
\newcommand{\nullsoftsetG}[1]{\left(\tilde{\emptyset},#1\right)}  
\newcommand{\absolutesoftsetG}[2]{\left(\tilde{#1},#2\right)}  
\newcommand{\softsetminus}{\widetilde{\setminus}}  
\newcommand{\softcup}{\tilde{\cup}}  
\newcommand{\softcap}{\tilde{\cap}}  
\newcommand{\softbigcup}{\widetilde{\bigcup}}  
\newcommand{\softbigcap}{\widetilde{\bigcap}}  
\newcommand{\softprod}{\widetilde{\prod}}  
\newcommand{\softcirc}{\widetilde{\circ}}  
\tikzset{commutative diagrams/.cd, softcircdiagram/.style={start anchor=center,end anchor=center,draw=none}}
\newcommand{\softcl}[2][X]{\operatorname{s-cl}_{#1} #2 }  
\newcommand{\softclpar}[2][X]{\operatorname{s-cl}_{#1}\!\!\left( #2 \right)}  
\newcommand{\rl}[1][Y]{{}^{#1}\!}   
\newcommand{\softhomeomorphic}{\tilde{\thickapprox}}  
\begin{document}
\title{A Soft Embedding Lemma for Soft Topological Spaces}
\author{Giorgio Nordo}

\maketitle

\begin{abstract}
In 1999, Molodtsov initiated the theory of soft sets as a new mathematical tool
for dealing with uncertainties in many fields of applied sciences.
In 2011, Shabir and Naz introduced and studied the notion of soft topological spaces, also defining
and investigating many new soft properties as generalization of the classical ones.
In this paper, we introduce the notions of soft separation between soft points and soft closed sets in order
to obtain a generalization of the well-known Embedding Lemma to the class of soft topological spaces.
\end{abstract}

\section{Introduction}

Almost every branch of sciences and many practical problems in
engineering, economics, computer science, physics, meteorology, statistics, medicine, sociology, etc.
have its own uncertainties and ambiguities because they
depend on the influence of many parameters and,
due to the inadequacy of the existing theories of parameterization in dealing with uncertainties,
it is not always easy to model such a kind of problems
by using classical mathematical methods.
In 1999, Molodtsov \cite{molodtsov} initiated the novel concept
of Soft Sets Theory as a new mathematical tool
and a completely different approach for dealing with uncertainties
while modelling problems in a large class of applied sciences.

In the past few years, the fundamentals of soft set theory have been studied by many researchers.
Starting from 2002, Maji, Biswas and Roy \cite{maji2002, maji2003} studied the theory of soft sets initiated by
Molodtsov, defining notion as the equality of two soft sets, the subset and super set of a soft set,
the complement of a soft set, the union and the intersection of soft sets,
the null soft set and absolute soft set, and they gave many examples.
In 2005, Pei and Miao \cite{pei} and Chen et al. \cite{chen} improved the work of Maji.
Further contributions to the Soft Sets Theory were given by Yang \cite{yang},
Ali et al. \cite{ali},
Fu \cite{fu},
Qin and Hong \cite{qin},
Sezgin and Atag\"{u}n \cite{sezgin},
Neog and Sut \cite{neog},
Ahmad and Kharal \cite{ahmad2009},
Babitha and Sunil \cite{babitha2010},
Ibrahim and Yosuf \cite{ibrahim},
Singh and Onyeozili \cite{singh},
Feng and Li \cite{feng},
Onyeozili and Gwary \cite{onyeozili},
\c{C}a\u{g}man \cite{cagman2014}.

In 2011, Shabir and Naz \cite{shabir} introduced the concept of soft topological spaces,
also defining and investigating the notions of soft closed sets, soft closure,
soft neighborhood, soft subspace and some separation axioms.
Some other properties related to soft topology were studied by
\c{C}a\u{g}man, Karata\c{s} and Enginoglu in \cite{cagman2011}.
In the same year Hussain and Ahmad \cite{hussain} investigated
the properties of soft closed sets, soft neighbourhoods, soft interior, soft exterior
and soft boundary, while
Kharal and Ahmad \cite{kharal} defined the notion of a mapping on soft classes
and studied several properties of images and inverse images.
The notion of soft interior, soft neighborhood and soft continuity were also
object of study by Zorlutuna, Akdag, Min and Atmaca in \cite{zorlutuna}.
Some other relations between these notions was proved by Ahmad and Hussain in \cite{ahmad}.
The neighbourhood properties of a soft topological space were investigated in 2013
by Nazmul and Samanta \cite{nazmul}.
The class of soft Hausdorff spaces was extensively studied by Varol and Ayg\"{u}n in \cite{varol2013}.
In 2012, Ayg\"{u}no\u{g}lu and Ayg\"{u}n \cite{aygunoglu}
defined and studied the notions of soft continuity and soft product topology.
Some years later, Zorlutuna and \c{C}aku \cite{zorlutuna2015}
gave some new characterizations of soft continuity, soft openness and soft closedness of soft mappings,
also generalizing the Pasting Lemma to the soft topological spaces.
Soft first countable and soft second countable spaces were instead defined and studied by Rong in \cite{rong}.
Furthermore, the notion of soft continuity between soft topological spaces was independently introduced
and investigated by Hazra, Majumdar and Samanta in \cite{hazra}.
Soft connectedness was also studied in 2015 by Al-Khafaj \cite{al-khafaj} and Hussain \cite{hussain2015a}.
In the same year, Das and Samanta \cite{das,das2} introduced and extensively studied the soft metric spaces.
In 2015, Hussain and Ahmad \cite{hussain2015b} redefined and explored several properties of
soft $T_i$ (with $i=0,1,2,3,4$) separation axioms and discuss some soft invariance properties
namely soft topological property and soft hereditary property.
In \cite{xie}, Xie introduced the concept of soft points and proved
that soft sets can be translated into soft points so that they may conveniently dealt
as same as ordinary sets.
In 2016, Tantawy, El-Sheikh and Hamde \cite{tantawy} continued the study of soft $T_i$-spaces
(for $i=0,1,2,3,4,5$) also discussing the hereditary and topological properties for such spaces.
In 2017, Fu, Fu and You \cite{fu2017} investigated some basic properties concerning the soft topological product space.
Further contributions to the theory of soft sets and that of soft topology were added
in 2011, by Min \cite{min},
in 2012, by Janaki \cite{janaki},
and by Varol, Shostak and Ayg\"{u}n \cite{varol},
in 2013 and 2014, by Peyghan, Samadi and Tayebi \cite{peyghan},
by Wardowski \cite{wardowski},
by Nazmul and Samanta \cite{nazmul2014},
by Peyghan \cite{peyghan2014},
and by Georgiou, Megaritis and Petropoulos \cite{georgiou2013,georgiou2014},
in 2015 by Ulu\c{c}ay, \c{S}ahin, Olgun and Kili\c{c}man \cite{ulucay},
and by Shi and Pang \cite{shi},
in 2016 by Wadkar, Bhardwaj, Mishra and Singh \cite{wadkar},
by Matejdes \cite{matejdes2016},
and by Fu and Fu \cite{fu2017},
in 2017 by Bdaiwi \cite{bdaiwi},
and, more recently, by Bayramov and Aras \cite{bayramov},
and by Nordo \cite{nordo2018,nordo2019}.

In the present paper we will present the notions of family of soft mappings
soft separating soft points and soft points from soft closed sets
in order to give a generalization of the well-known Embedding Lemma for soft topological spaces.

\section{Preliminaries}
In this section we present some basic definitions and results on soft sets and suitably exemplify them.
Terms and undefined concepts are used as in \cite{engelking}. 


\begin{definition}{\rm\cite{molodtsov}}
\label{def:softset}
Let $\U$ be an initial universe set and $\E$ be a nonempty set of parameters (or abstract attributes)
under consideration with respect to $\U$ and $A\subseteq \E$,
we say that a pair $(F,A)$ is a \df{soft set} over $\U$
if $F$ is a set-valued mapping $F: A \to \PP(\U)$
which maps every parameter $e \in A$ to a subset $F(e)$ of $\U$.
\end{definition}

In other words, a soft set is not a real (crisp) set
but a parameterized family $\left\{ F(e) \right\}_{e\in A}$ of subsets of the universe $\U$.
For every parameter $e \in A$, $F(e)$ may be considered as the set of \textit{$e$-approximate elements}
of the soft set $(F,A)$.

\begin{remark}
\label{rem:sameparameter}
In 2010, Ma, Yang and Hu \cite{ma} proved that every soft set $(F,A)$ is equivalent
to the soft set $(F,\E)$ related to the whole set of parameters $\E$,
simply considering empty every approximations of parameters which are missing in $A$,
that is extending in a trivial way its set-valued mapping,
i.e. setting $F(e)=\emptyset$, for every $e \in \E \setminus A$.
\\
For such a reason, in this paper we can consider all the soft sets over the same parameter set $\E$
as in \cite{chiney} and we will redefine all the basic operations and relations
between soft sets originally introduced in \cite{maji2002,maji2003,molodtsov} as in \cite{nazmul},
that is by considering the same parameter set.
\end{remark}

\begin{definition}{\rm\cite{zorlutuna}}
\label{def:setofsoftsets}
The set of all the soft sets over a universe $\U$ with respect to a set of parameters $\E$
will be denoted by $\SSE$.
\end{definition}

\begin{definition}{\rm\cite{nazmul}}
\label{def:softsubset}
Let $(F,\E),(G,\E) \in \SSE$ be two soft sets over a common universe $\U$
and a common set of parameters $\E$,
we say that $(F,\E)$ is a \df{soft subset} of $(G,\E)$ and we write
$(F,\E) \softsubseteq (G,\E)$
if $F(e)\subseteq G(e)$ for every $e \in \E$.
\end{definition}

\begin{definition}{\rm\cite{nazmul}}
\label{def:softequal}
Let $(F,\E),(G,\E) \in \SSE$ be two soft sets over a common universe $\U$, we say that
$(F,\E)$ and $(G,\E)$ are \df{soft equal} and we write $(F,\E) \softequal (G,\E)$
if $(F,\E) \softsubseteq (G,\E)$ and $(G,\E) \softsubseteq (F,\E)$.
\end{definition}

\begin{remark}
\label{rem:softequal}
If $(F,\E),(G,\E) \in \SSE$ are two soft sets over $\U$,
it is a trivial matter to note that $(F,\E) \softequal (G,\E)$ if and only if
it results $F(e)=G(e)$ for every $e \in \E$.
\end{remark}

\begin{definition}{\rm\cite{nazmul}}
\label{def:nullsoftset}
A soft set $(F,\E)$ over a universe $\U$ is said to be the \df{null soft set}
and it is denoted by $\nullsoftset$ if $F(e) = \emptyset$ for every $e \in \E$.
\end{definition}

\begin{definition}{\rm\cite{nazmul}}
\label{def:absolutesoftset}
A soft set $(F,\E) \in \SSE$ over a universe $\U$ is said to be the \df{absolute soft set}
and it is denoted by $\absolutesoftset$
if $F(e) = \U$ for every $e \in \E$.
\end{definition}

\begin{definition}
\label{def:constantsoftset}
Let $(F,\E) \in \SSE$ be a soft set over a universe $\U$
and $V$ be a nonempty subset of $U$,
the \df{constant soft set} of $V$, denoted by $(\tilde{V},\E)$)
(or, sometimes, by $\tilde{V}$), is the soft set $(\underbar{V},\E)$,
where $\underbar{V}: \E \to \PP(\U)$ is the constant set-valued mapping
defined by $\underbar{V}(e) = V$, for every $e \in \E$.
\end{definition}

\begin{definition}{\rm\cite{nazmul}}
\label{def:softcomplement}
Let $(F,\E) \in \SSE$ be a soft set over a universe $\U$, the \df{soft complement}
(or more exactly the \textit{soft relative complement}) of $(F,\E)$,
denoted by $(F,\E)^\complement$, is the soft set $\left( F^\complement, \E \right)$
where $F^\complement : \E \to \PP(\U)$ is the set-valued mapping
defined by $F^\complement(e) = F(e)^\complement = \U \setminus F(e)$, for every $e \in \E$.
\end{definition}

\begin{definition}{\rm\cite{nazmul}}
\label{def:softdifference}
Let $(F,\E),(G,\E) \in \SSE$ be two soft sets over a common universe $\U$,
the \df{soft difference} of $(F,\E)$ and $(G,\E)$,
denoted by $(F,\E) \softsetminus (G,\E)$, is the soft set $\left( F \setminus G, \E \right)$
where $F \setminus G : \E \to \PP(\U)$ is the set-valued mapping
defined by $(F \setminus G)(e) = F(e) \setminus G(e)$, for every $e \in \E$.
\end{definition}

Clearly, for every soft set $(F,\E) \in \SSE$, it results
$(F,\E)^\complement \, \softequal \, \absolutesoftset \softsetminus (F,\E)$.

\begin{definition}{\rm\cite{nazmul}}
\label{def:softunion}
Let $(F,\E), (G,\E) \in \SSE$ be two soft sets over a universe $\U$,
the \df{soft union} of $(F,\E)$ and $(G,\E)$, denoted by $(F,\E) \softcup (G,\E)$,
is the soft set $\left( F \cup G, \E \right)$
where $F \cup G: \E \to \PP(\U)$ is the set-valued mapping
defined by $(F \cup G)(e) = F(e) \cup G(e)$, for every $e \in \E$.
\end{definition}

\begin{definition}{\rm\cite{nazmul}}
\label{def:softintersection}
Let $(F,\E), (G,\E) \in \SSE$ be two soft sets over a universe $\U$,
the \df{soft intersection} of $(F,\E)$ and $(G,\E)$, denoted by $(F,\E) \softcap (G,\E)$,
is the soft set $\left( F \cap G, \E \right)$
where $F \cap G: \E \to \PP(\U)$ is the set-valued mapping
defined by $(F \cap G)(e) = F(e) \cap G(e)$, for every $e \in \E$.
\end{definition}

\begin{proposition}{\rm\cite{cagman2014}}
\label{pro:propertiesunionandintersection}
For every soft set $(F,\E) \in \SSE$, the following hold:
\begin{enumerate}[label={\rm(\arabic*)}]
\item $(F,\E) \, \softcup \, (F,\E) \, \softequal \, (F,\E) .$
\item $(F,\E) \, \softcup \, \nullsoftset \, \softequal \, (F,\E) .$
\item $(F,\E) \, \softcup \, \absolutesoftset \, \softequal \, \absolutesoftset .$
\item $(F,\E) \, \softcap \, (F,\E) \, \softequal \, (F,\E) .$
\item $(F,\E) \, \softcap \, \nullsoftset \, \softequal \, \nullsoftset .$
\item $(F,\E) \, \softcap \, \absolutesoftset \, \softequal \, (F,\E) .$
\end{enumerate}
\end{proposition}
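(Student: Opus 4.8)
The plan is to reduce each of the six soft identities to the corresponding classical (crisp) set-theoretic law, evaluated one parameter at a time, and then invoke Remark~\ref{rem:softequal}. Recall that that remark characterizes soft equality pointwise: two soft sets $(H,\E),(K,\E) \in \SSE$ satisfy $(H,\E) \softequal (K,\E)$ if and only if $H(e) = K(e)$ for every $e \in \E$. Hence in every case it suffices to fix an arbitrary parameter $e \in \E$, unfold the definition of the soft operation involved, and check the resulting ordinary equality of subsets of $\U$.

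For the three union identities, the key is Definition~\ref{def:softunion}, which gives $(F \cup G)(e) = F(e) \cup G(e)$. Applying this with $G = F$ yields $(F \cup F)(e) = F(e) \cup F(e) = F(e)$ by idempotence of $\cup$, which settles (1). For (2) and (3) I would further use Definition~\ref{def:nullsoftset} and Definition~\ref{def:absolutesoftset} to read off the relevant set-valued map at $e$: the null soft set $\nullsoftset$ contributes $\emptyset$ and the absolute soft set $\absolutesoftset$ contributes $\U$. Thus $(F,\E) \softcup \nullsoftset$ has $e$-approximation $F(e) \cup \emptyset = F(e)$, giving (2), while $(F,\E) \softcup \absolutesoftset$ has $e$-approximation $F(e) \cup \U = \U$, giving (3) since this is exactly the $e$-approximation of $\absolutesoftset$. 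The intersection identities (4), (5), (6) are entirely dual: using Definition~\ref{def:softintersection} in the form $(F \cap G)(e) = F(e) \cap G(e)$, together with the same readings of $\nullsoftset$ and $\absolutesoftset$, one obtains $F(e) \cap F(e) = F(e)$, $F(e) \cap \emptyset = \emptyset$, and $F(e) \cap \U = F(e)$, respectively.

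There is no genuine obstacle here: the statement is purely formal and follows from the definitions once the pointwise reduction is in place. The only point requiring a little care is bookkeeping, namely correctly substituting the $e$-approximations of the distinguished soft sets, $\emptyset$ for $\nullsoftset$ and $\U$ for $\absolutesoftset$, at each parameter, and recognizing that each resulting constant value matches the $e$-approximation of the claimed right-hand side. Since $e \in \E$ was arbitrary, Remark~\ref{rem:softequal} then upgrades each parameter-wise equality to the asserted soft equality, completing all six cases.
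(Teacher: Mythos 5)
Your argument is correct: reducing each identity to the corresponding crisp set-theoretic law at an arbitrary parameter $e \in \E$ via Definitions~\ref{def:softunion}, \ref{def:softintersection}, \ref{def:nullsoftset} and \ref{def:absolutesoftset}, and then invoking Remark~\ref{rem:softequal} to pass from the parameter-wise equalities to soft equality, is exactly the standard verification. The paper itself states this proposition as a known result cited from the literature and gives no proof, so there is nothing to compare against; your pointwise reduction is the canonical argument and is complete.
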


\begin{definition}{\rm\cite{al-khafaj}}
\label{def:softdisjunct}
Two soft sets $(F,\E)$ and $(G,\E)$ over a common universe $\U$
are said to be \df{soft disjoint} if their soft intersection is the soft null set,
i.e. if $(F,\E) \softcap (G,\E) \, \softequal \, \nullsoftset$.
If two soft sets are not soft disjoint, we also say that they \df{soft meet} each other.
In particular, if $(F,\E) \softcap (G,\E) \, \softnotequal \, \nullsoftset$
we say that $(F,\E)$ \df{soft meets} $(G,\E)$.
\end{definition}

\begin{proposition}{\rm\cite{shabir}}
\label{pro:softdifferenceandsoftcomplement}
Let $(F,\E), (G,\E) \in \SSE$ be two soft sets over a universe $\U$, we have that
$(F,\E) \softsetminus (G,\E) \softequal (F,\E) \softcap (G,\E)^\complement$.
\end{proposition}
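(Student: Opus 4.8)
The plan is to reduce the asserted soft equality to a parameter-wise identity between ordinary subsets of the universe and then to invoke a single classical fact from elementary set theory. By Remark~\ref{rem:softequal}, in order to establish
$(F,\E) \softsetminus (G,\E) \softequal (F,\E) \softcap (G,\E)^\complement$
it suffices to verify that the two soft sets have the same $e$-approximation for every parameter $e \in \E$. So the whole argument amounts to computing both sides pointwise in $e$ and comparing the resulting subsets of $\U$.

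First I would unwind the left-hand side. By Definition~\ref{def:softdifference}, the soft difference $(F,\E) \softsetminus (G,\E)$ is the soft set $(F \setminus G, \E)$, whose $e$-approximation is $(F \setminus G)(e) = F(e) \setminus G(e)$. Next I would unwind the right-hand side in two steps: by Definition~\ref{def:softcomplement} the soft complement $(G,\E)^\complement$ equals $(G^\complement, \E)$ with $G^\complement(e) = \U \setminus G(e)$, and then by Definition~\ref{def:softintersection} the soft intersection $(F,\E) \softcap (G,\E)^\complement$ is the soft set $(F \cap G^\complement, \E)$, whose $e$-approximation is $F(e) \cap G^\complement(e) = F(e) \cap (\U \setminus G(e))$.

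It then remains to observe the classical set-theoretic identity $A \setminus B = A \cap (\U \setminus B)$, valid for any subsets $A, B$ of the fixed universe $\U$. Applying it with $A = F(e)$ and $B = G(e)$ yields $F(e) \setminus G(e) = F(e) \cap (\U \setminus G(e))$ for every $e \in \E$, which is precisely the equality of the two $e$-approximations computed above. Since this holds for all parameters $e \in \E$, Remark~\ref{rem:softequal} delivers the desired soft equality.

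I do not expect any genuine obstacle: the entire content of the statement is the elementary identity $A \setminus B = A \cap (\U \setminus B)$, and the only care required is the bookkeeping of correctly translating each soft operation into its defining set-valued mapping before comparing the approximations parameter by parameter.
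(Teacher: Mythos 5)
Your proof is correct: reducing the soft equality to the parameter-wise identity $F(e)\setminus G(e)=F(e)\cap(\U\setminus G(e))$ via Remark~\ref{rem:softequal} and Definitions~\ref{def:softcomplement}, \ref{def:softdifference} and \ref{def:softintersection} is exactly the standard argument. The paper itself gives no proof of this proposition (it is quoted from Shabir--Naz), so there is nothing to compare against; your write-up supplies the expected routine verification without any gap.
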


The notions of soft union and intersection admit some obvious generalizations to a family
with any number of soft sets.

\begin{definition}{\rm\cite{nazmul}}
\label{def:generalizedsoftunion}
Let $\left\{(F_i,\E) \right\}_{i\in I} \subseteq \SSE$ be a nonempty subfamily
of soft sets over a universe $\U$,
the (generalized) \df{soft union} of $\left\{(F_i,\E) \right\}_{i\in I}$,
denoted by $\softbigcup_{i \in I} (F_i,\E) $,
is defined by $\left(\bigcup_{i \in I} F_i, \E \right)$
where $\bigcup_{i \in I} F_i: \E \to \PP(\U)$ is the set-valued mapping
defined by $\left(\bigcup_{i \in I} F_i\right)(e) = \bigcup_{i \in I} F_i(e)$, for every $e \in \E$.
\end{definition}

\begin{definition}{\rm\cite{nazmul}}
\label{def:generalizedsoftintersection}
Let $\left\{(F_i,\E) \right\}_{i\in I} \subseteq \SSE$ be a nonempty subfamily
of soft sets over a universe $\U$,
the (generalized) \df{soft intersection} of $\left\{(F_i,\E) \right\}_{i\in I}$,
denoted by $\softbigcap_{i \in I} (F_i,\E) $,
is defined by $\left(\bigcap_{i \in I} F_i, \E \right)$
where $\bigcap_{i \in I} F_i: \E \to \PP(\U)$ is the set-valued mapping
defined by $\left(\bigcap_{i \in I} F_i\right)(e) = \bigcap_{i \in I} F_i(e)$, for every $e \in \E$.
\end{definition}

\begin{proposition}
\label{pro:generalizeddemorganlaws}
Let $\left\{(F_i,\E) \right\}_{i\in I} \subseteq \SSE$ be a nonempty subfamily
of soft sets over a universe $\U$, it results:
\begin{enumerate}[label={\rm(\arabic*)}]
\item $\left( \softbigcup_{i \in I} (F_i,\E) \right)^{\!\complement} \softequal \;
\softbigcap_{i \in I} (F_i,\E)^\complement \,.$
\item $\left( \softbigcap_{i \in I} (F_i,\E) \right)^{\!\complement} \softequal \;
\softbigcup_{i \in I} (F_i,\E)^\complement \,.$
\end{enumerate}
\end{proposition}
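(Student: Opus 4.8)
The plan is to prove the generalized De Morgan laws for soft sets by reducing everything to the classical (crisp) De Morgan laws applied pointwise at each parameter, exploiting the fact that all soft operations in this paper are defined parameter-by-parameter. Since a soft equality $(F,\E)\softequal(G,\E)$ holds if and only if $F(e)=G(e)$ for every $e\in\E$ (Remark~\ref{rem:softequal}), it suffices to verify the required set-theoretic identity in each fibre $F(e)\subseteq\U$.

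First I would prove item (1). Fix an arbitrary parameter $e\in\E$. By Definition~\ref{def:softcomplement}, the set-valued mapping of the left-hand side evaluated at $e$ is $\left(\bigcup_{i\in I}F_i\right)(e)^\complement$, and by Definition~\ref{def:generalizedsoftunion} this equals $\left(\bigcup_{i\in I}F_i(e)\right)^\complement$. Applying the classical De Morgan law in the power set $\PP(\U)$ gives $\bigcap_{i\in I}F_i(e)^\complement$. On the other side, the set-valued mapping of $\softbigcap_{i\in I}(F_i,\E)^\complement$ evaluated at $e$ is, by Definition~\ref{def:generalizedsoftintersection} followed by Definition~\ref{def:softcomplement}, precisely $\bigcap_{i\in I}F_i(e)^\complement$. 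Since the two mappings agree at the arbitrary parameter $e$, Remark~\ref{rem:softequal} yields the desired soft equality. Item (2) is proved by the same chain of equalities with the roles of $\bigcup$ and $\bigcap$ interchanged, again invoking the classical De Morgan law $\left(\bigcap_{i\in I}F_i(e)\right)^\complement=\bigcup_{i\in I}F_i(e)^\complement$ at each $e$.

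I do not expect any genuine obstacle here: the statement is essentially a formal transport of a standard crisp identity through definitions that are themselves pointwise. The only point requiring a modicum of care is the bookkeeping of the three nested operations (generalized union/intersection followed by complement versus complement followed by generalized intersection/union) and making sure each is unwound via the correct definition in the correct order. Because the family $\{(F_i,\E)\}_{i\in I}$ is assumed nonempty, no degenerate empty-index issues arise, so the classical De Morgan laws apply without qualification at every fibre.
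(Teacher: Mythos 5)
Your proof is correct. The paper actually states Proposition~\ref{pro:generalizeddemorganlaws} without any proof, so there is nothing to compare against; your pointwise reduction --- unwinding Definitions~\ref{def:generalizedsoftunion}, \ref{def:generalizedsoftintersection} and \ref{def:softcomplement} at each parameter $e\in\E$, applying the classical De Morgan laws in $\PP(\U)$, and concluding via Remark~\ref{rem:softequal} --- is exactly the standard argument the author evidently considered too routine to include, and it is carried out with the correct ordering of the nested operations.
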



\begin{definition}{\rm\cite{xie}}
\label{def:softpoint}
A soft set $(F,\E) \in \SSE$ over a universe $\U$ is said to be a \df{soft point} over $U$
if it has only one non-empty approximation which is a singleton,
i.e. if there exists some parameter $\alpha \in \E$
and an element $p \in \U$ such that
$F(\alpha) = \{ p \}$ and $F(e)=\emptyset$ for every $e \in \E \setminus \{ \alpha \}$.
Such a soft point is usually denoted by $(p_\alpha, \E)$.
The singleton $\{ p \}$ is called the \textit{support set} of the soft point
and $\alpha$ is called the \textit{expressive parameter} of $(p_\alpha, \E)$.
\end{definition}

\begin{remark}
\label{rem:softpoint}
In other words, a soft point $(p_\alpha, \E)$ is a soft set corresponding to
the set-valued mapping $p_\alpha : \E \to \mathbb(U)$ that, for any $e \in \E$, is defined by
$$ p_\alpha(e) =
\left\{
\begin{array}{ll}
\{ p \} & \mbox{ if } e = \alpha \\
\emptyset & \mbox{ if } e \in \E \setminus \{\alpha \} \\
\end{array}
\right. .
$$
\end{remark}

\begin{definition}{\rm\cite{xie}}
\label{def:setofsoftpoints}
The set of all the soft points over a universe $\U$ with respect to a set of parameters $\E$
will be denoted by $\SPE$.
\end{definition}

Since any soft point is a particular soft set, it is evident that $\SPE \subseteq \SSE$.

\begin{definition}{\rm\cite{xie}}
\label{def:softpointsoftbelongstosoftset}
Let $(p_\alpha, \E) \in \SPE$ and $(F,\E) \in \SSE$
be a soft point and a soft set over a common universe $\U$, respectively.
We say that \df{the soft point $(p_\alpha, \E)$ soft belongs to the soft set $(F,\E)$}
and we write $(p_\alpha, \E) \softin (F,\E)$, if the soft point is a soft subset of the soft set,
i.e. if $(p_\alpha, \E) \softsubseteq (F,\E)$
and hence if $p \in F(\alpha)$.
We also say that \df{the soft point $(p_\alpha, \E)$ does not belongs to the soft set $(F,\E)$}
and we write $(p_\alpha, \E) \softnotin (F,\E)$, if the soft point is not a soft subset of the soft set,
i.e. if $(p_\alpha, \E) \softnotsubseteq (F,\E)$
and hence if $p \notin F(\alpha)$.
\end{definition}

\begin{definition}{\rm\cite{das}}
\label{def:equalitysoftpoints}
Let $(p_\alpha, \E), (q_\beta, \E) \in \SPE$ be two soft points over a common universe $\U$,
we say that $(p_\alpha, \E)$ and $(q_\beta, \E)$ are \df{soft equal},
and we write $(p_\alpha, \E) \softequal (q_\beta, \E)$,
if they are equals as soft sets and hence if $p = q$ and $\alpha = \beta$.
\end{definition}

\begin{definition}{\rm\cite{das}}
\label{def:distinctssoftpoints}
We say that two soft points $(p_\alpha, \E)$ and $(q_\beta, \E)$ are \df{soft distincts},
and we write $(p_\alpha, \E) \softnotequal (q_\beta, \E)$,
if and only if $p\ne q$ or $\alpha \ne \beta$.
\end{definition}

The notion of soft point allows us to express the soft inclusion in a more familiar way.

\begin{proposition}
\label{pro:softinclusionbysoftpoint}
Let $(F,\E), (G,\E) \in \SSE$ be two soft sets over a common universe $\U$ respect
to a parameter set $\E$, then $(F,\E) \softsubseteq (G,\E)$
if and only if for every soft point $(p_\alpha, \E) \softin (F,\E)$
it follows that $(p_\alpha, \E) \softin (G,\E)$.
\end{proposition}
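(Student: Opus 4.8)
The plan is to prove the two implications of the biconditional separately, in each case translating the soft-set statements into the pointwise conditions on approximations furnished by Definition~\ref{def:softsubset} and Definition~\ref{def:softpointsoftbelongstosoftset}.

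For the necessity, I would assume $(F,\E) \softsubseteq (G,\E)$ and fix an arbitrary soft point $(p_\alpha,\E) \softin (F,\E)$. By Definition~\ref{def:softpointsoftbelongstosoftset} the soft membership means $p \in F(\alpha)$, while by Definition~\ref{def:softsubset} the hypothesis yields $F(e)\subseteq G(e)$ for every $e \in \E$, in particular for $e=\alpha$. Composing these two inclusions gives $p \in G(\alpha)$, which is precisely the assertion $(p_\alpha,\E) \softin (G,\E)$.

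For the sufficiency, I would reason directly at the level of approximations. Assuming the soft-point condition, I fix any parameter $e \in \E$ and any element $p \in F(e)$. The key observation is that such a $p$ canonically determines a soft point $(p_e,\E)$ whose $e$-approximation is the singleton $\{p\}\subseteq F(e)$ and whose remaining approximations are empty; by Definitions~\ref{def:softpoint} and~\ref{def:softpointsoftbelongstosoftset} this soft point satisfies $(p_e,\E) \softin (F,\E)$. Applying the hypothesis to it forces $(p_e,\E) \softin (G,\E)$, hence $p \in G(e)$. Since $p$ and $e$ were arbitrary, I conclude $F(e)\subseteq G(e)$ for all $e \in \E$, that is $(F,\E) \softsubseteq (G,\E)$ by Definition~\ref{def:softsubset}.

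The only step demanding any attention is this last passage from an arbitrary member $p$ of an approximation $F(e)$ to the associated soft point $(p_e,\E)$: one must verify that it genuinely is a soft point soft-belonging to $(F,\E)$, so that the hypothesis is applicable. This check is immediate from the definitions and constitutes the sole (and very mild) content of the argument; no other obstacle arises.
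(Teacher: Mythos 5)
Your proposal is correct and follows essentially the same argument as the paper: the forward implication by specializing $F(e)\subseteq G(e)$ to $e=\alpha$, and the converse by associating to each $e\in\E$ and $p\in F(e)$ the soft point $(p_e,\E)$, applying the hypothesis, and deducing $p\in G(e)$. No gaps.
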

\begin{proof}
Suppose that $(F,\E) \softsubseteq (G,\E)$. Then, for every $(p_\alpha, \E) \softin (F,\E)$,
by Definition \ref{def:softpointsoftbelongstosoftset}, we have that $p \in F(\alpha)$.
Since, by Definition \ref{def:softsubset}, we have in particular that $F(\alpha) \subseteq G(\alpha)$,
it follows that $p \in G(\alpha)$, which,
by Definition \ref{def:softpointsoftbelongstosoftset}, is equivalent to say that $(p_\alpha, \E) \softin (G,\E)$.
\\
Conversely, suppose that for every soft point $(p_\alpha, \E) \softin (F,\E)$
it follows $(p_\alpha, \E) \softin (G,\E)$.
Then, for every $e \in \E$ and any $p \in F(e)$, by Definition \ref{def:softpointsoftbelongstosoftset},
we have that the soft point $(p_e,\E) \softin (F,\E)$. So, by our hypotesis, it follows that
$(p_e,\E) \softin (G,\E)$ which is equivalent to $p \in G(e)$.
This proves that $F(e)\subseteq G(e)$ for every $e \in \E$ and so that $(F,\E) \softsubseteq (G,\E)$.
\end{proof}


\begin{definition}{\rm\cite{hussain}}
\label{def:subsoftset}
Let $(F,\E) \in \SSE$ be a soft set over a universe $\U$
and $V$ be a nonempty subset of $\U$,
the \df{sub soft set} of $(F,\E)$ over $V$, is the soft set $\left(\rl[V] F,\E \right)$,
where $\rl[V] F: \E \to \PP(\U)$ is the set-valued mapping
defined by $\rl[V] F(e) = F(e) \cap V$, for every $e \in \E$.
\end{definition}

\begin{remark}
\label{rem:subsoftset}
Using Definitions \ref{def:constantsoftset} and \ref{def:softintersection},
it is a trivial matter to verify that
a sub soft set of $(F,\E)$ over $V$ can also be expressed as
$\left(\rl[V] F,\E \right) \softequal \, (F,\E) \softcap (\tilde{V}, \E)$.
\\
Furthermore, it is evident that the sub soft set $\left(\rl[V] F,\E \right)$ above defined
belongs to the set of all the soft sets over $V$ with respect to the set of parameters $\E$,
which is contained in the set of all the soft sets over the universe $\U$ with respect to $\E$,
that is $\left(\rl[V] F,\E \right) \in \SSE[V] \subseteq \SSE$.
\end{remark}


\begin{definition}{\rm\cite{babitha2010,kazanci}}
\label{def:softproductofsoftsets}
Let $\left\{(F_i,\E_i) \right\}_{i\in I}$ be a family of soft sets
over a universe set $\U_i$ with respect to a set of parameters $\E_i$ (with $i \in I$), respectively.
Then the \df{soft product} (or, more precisely, the \df{soft cartesian product})
of $\left\{(F_i,\E_i) \right\}_{i\in I}$,
denoted by $\softprod_{i \in I} (F_i,\E_i)$,
is the soft set $\left( \prod_{i \in I} F_i , \prod_{i \in I} \E_i \right)$
over the (usual) cartesian product $\prod_{i \in I} \U_i$
and with respect to the set of parameters $\prod_{i \in I} \E_i$,
where $\prod_{i \in I} F_i : \prod_{i \in I} \E_i \to \PP\left( \prod_{i \in I} \U_i \right)$
is the set-valued mapping defined by $\prod_{i \in I} F_i \left( \langle e_i \rangle_{i \in I} \right)
= \prod_{i \in I} F_i(e_i)$, for every $\langle e_i \rangle_{i \in I} \in \prod_{i \in I} \E_i$.
\end{definition}

\begin{proposition}{\rm\cite{nordo2019prod}}
\label{pro:softpointsoftbelongingtosoftproduct}
Let $\softprod_{i \in I} (F_i,\E_i)$ be the soft product of a family
$\left\{(F_i,\E_i) \right\}_{i\in I}$  of soft sets
over a universe set $\U_i$ with respect to a set of parameters $\E_i$ (with $i \in I$),
and let $\left( p_\alpha , \prod_{i \in I} \E_i \right) \in \SPG{\prod_{i \in I} \U_i}{\prod_{i \in I} \E_i}$
be a soft point of the product $\prod_{i \in I} \U_i$,
where $p = \langle p_i \rangle_{i \in I} \in \prod_{i \in I} \U_i$ and
$\alpha = \langle \alpha_i \rangle_{i \in I} \in \prod_{i \in I} \E_i$, then
$\left( p_\alpha , \prod_{i \in I} \E_i \right) \softin \, \softprod_{i \in I} (F_i,\E_i)$
if and only if
$\left( (p_i)_{\alpha_i}, \E_i \right) \softin (F_i, \E_i)$ for every $i \in I$.
\end{proposition}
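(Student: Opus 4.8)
The plan is to reduce the statement to a short chain of elementary equivalences, each justified by one of the definitions recalled above, so that the entire argument amounts to unwinding the soft-belonging relation through the defining formula of the soft product.

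First I would invoke Definition \ref{def:softpointsoftbelongstosoftset}, which tells us that the soft point $\left(p_\alpha, \prod_{i \in I} \E_i\right)$ soft belongs to $\softprod_{i \in I}(F_i,\E_i)$ precisely when $p \in \left(\prod_{i \in I} F_i\right)(\alpha)$, where $\alpha = \langle \alpha_i \rangle_{i \in I}$ is the expressive parameter of the soft point and $\prod_{i \in I} F_i$ is the set-valued mapping of the soft product.

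The central step is to evaluate this mapping at $\alpha$: by Definition \ref{def:softproductofsoftsets} one has $\left(\prod_{i \in I} F_i\right)\!\left(\langle \alpha_i \rangle_{i \in I}\right) = \prod_{i \in I} F_i(\alpha_i)$. Substituting $p = \langle p_i \rangle_{i \in I}$, the membership $p \in \prod_{i \in I} F_i(\alpha_i)$ is, by the very definition of the ordinary cartesian product of a family of sets, equivalent to the conjunction ``$p_i \in F_i(\alpha_i)$ for every $i \in I$''. Finally I would apply Definition \ref{def:softpointsoftbelongstosoftset} once more, now in each factor, to rewrite each condition $p_i \in F_i(\alpha_i)$ as the soft belonging $\left((p_i)_{\alpha_i}, \E_i\right) \softin (F_i,\E_i)$. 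Reading the chain of equivalences from end to end yields exactly the claimed biconditional.

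I do not expect any genuine obstacle here, since every link in the chain is a definitional rewriting. The only point that deserves care is index bookkeeping: one must make sure that when $\alpha$ and $p$ are written out coordinatewise the evaluation of the product mapping and the coordinatewise description of cartesian-product membership refer to the same tuple of parameters $\langle \alpha_i \rangle_{i \in I}$, so that the $i$-th factor of $p$ is tested against $F_i$ at the $i$-th parameter $\alpha_i$ and nowhere else.
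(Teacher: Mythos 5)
Your proposal is correct and follows essentially the same route as the paper's own proof: both unwind the soft-belonging relation via Definition \ref{def:softpointsoftbelongstosoftset} to the membership $p \in \left(\prod_{i \in I} F_i\right)(\alpha)$, evaluate the product mapping coordinatewise by Definition \ref{def:softproductofsoftsets}, and translate back factor by factor. Nothing is missing.
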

\begin{proof}
In fact, by using Definitions \ref{def:softproductofsoftsets} and \ref{def:softpointsoftbelongstosoftset},
$\left( p_\alpha , \prod_{i \in I} \E_i \right) \softin \softprod_{i \in I} (F_i,\E_i)$
means that
$p \in \left( \prod_{i \in I} F_i \right)(\alpha)$
that is $\langle p_i \rangle_{i \in I} \in \left( \prod_{i \in I} F_i \right)
\left( \langle \alpha_i \rangle_{i \in I} \right)$
which corresponds to say tat
$p_i \in F_i(\alpha_i)$ for every $i \in I$
which, by Definition \ref{def:softpointsoftbelongstosoftset}, is equivalent to
$\left( (p_i)_{\alpha_i}, \E_i \right) \softin (F_i, \E_i)$ for every $i \in I$.
\end{proof}

\begin{corollary}{\rm\cite{nordo2019prod}}
\label{cor:softnullityofsoftproduct}
The soft product of a family $\left\{(F_i,\E_i) \right\}_{i\in I}$
of soft sets over a universe set $\U_i$
with respect to a set of parameters $\E_i$ (with $i \in I$)
is null if and only if at least one of its soft sets is null, that is
$\softprod_{i \in I} (F_i,\E_i) \, \softequal \, \nullsoftsetG{\prod_{i \in I} \E_i}$
if$f$ there exists some $j \in I$ such that $(F_j,\E_j) \softequal \nullsoftset$.
\end{corollary}

\begin{proposition}{\rm\cite{nordo2019prod}}
\label{pro:monotonicofsoftproduct}
Let $\left\{(F_i,\E_i) \right\}_{i\in I}$ and $\left\{(G_i,\E_i) \right\}_{i\in I}$
be two families of soft sets over a universe set $\U_i$
with respect to a set of parameters $\E_i$ (with $i \in I$), such that
$(F_i,\E_i) \softsubseteq (G_i,\E_i)$ for every $i \in I$, then
their respective soft products are such that
$\softprod_{i \in I} (F_i,\E_i) \, \softsubseteq \, \softprod_{i \in I} (G_i,\E_i)$.
\end{proposition}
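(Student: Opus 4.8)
The plan is to reduce the statement about soft products to the elementwise characterization of soft point membership furnished by Proposition \ref{pro:softpointsoftbelongingtosoftproduct}, combined with the pointwise criterion for soft inclusion given in Proposition \ref{pro:softinclusionbysoftpoint}. The hypothesis is that $(F_i,\E_i) \softsubseteq (G_i,\E_i)$ for every $i \in I$, and the goal is to show $\softprod_{i \in I} (F_i,\E_i) \softsubseteq \softprod_{i \in I} (G_i,\E_i)$. Rather than unwinding the set-valued mappings directly by hand, I would argue through soft points, since the machinery has already been assembled for exactly this purpose.

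First I would take an arbitrary soft point $\left( p_\alpha , \prod_{i \in I} \E_i \right) \softin \softprod_{i \in I} (F_i,\E_i)$ of the product universe $\prod_{i \in I} \U_i$, writing $p = \langle p_i \rangle_{i \in I}$ and $\alpha = \langle \alpha_i \rangle_{i \in I}$ as in the setup of Proposition \ref{pro:softpointsoftbelongingtosoftproduct}. By that proposition, this soft membership is equivalent to $\left( (p_i)_{\alpha_i}, \E_i \right) \softin (F_i, \E_i)$ for every $i \in I$. Next, since $(F_i,\E_i) \softsubseteq (G_i,\E_i)$ by hypothesis, Proposition \ref{pro:softinclusionbysoftpoint} applied coordinatewise yields $\left( (p_i)_{\alpha_i}, \E_i \right) \softin (G_i, \E_i)$ for every $i \in I$. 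Applying Proposition \ref{pro:softpointsoftbelongingtosoftproduct} once more, now in the reverse direction and for the family $\left\{(G_i,\E_i)\right\}_{i \in I}$, this gives $\left( p_\alpha , \prod_{i \in I} \E_i \right) \softin \softprod_{i \in I} (G_i,\E_i)$.

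Since the soft point was arbitrary, I would then invoke Proposition \ref{pro:softinclusionbysoftpoint} a final time, in its \emph{sufficiency} direction, to conclude that $\softprod_{i \in I} (F_i,\E_i) \softsubseteq \softprod_{i \in I} (G_i,\E_i)$, which is the desired claim. I do not anticipate a genuine obstacle here: the result is essentially the monotonicity of the ordinary cartesian product lifted through the soft-point correspondence, and every equivalence needed has been established earlier in the excerpt. The only point requiring mild care is to ensure that the biconditional of Proposition \ref{pro:softpointsoftbelongingtosoftproduct} is used with the correct family on each side, but this is a bookkeeping matter rather than a mathematical difficulty. An entirely equivalent and perhaps shorter route would bypass soft points altogether and argue directly: for each $\langle e_i \rangle_{i \in I} \in \prod_{i \in I} \E_i$, one has $\left(\prod_{i \in I} F_i\right)\left(\langle e_i \rangle_{i \in I}\right) = \prod_{i \in I} F_i(e_i) \subseteq \prod_{i \in I} G_i(e_i) = \left(\prod_{i \in I} G_i\right)\left(\langle e_i \rangle_{i \in I}\right)$, where the middle inclusion follows from $F_i(e_i) \subseteq G_i(e_i)$ for all $i$; but the soft-point argument is cleaner to present given the tools already in place.
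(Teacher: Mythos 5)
Your argument is correct. Note, however, that the paper itself states this proposition without proof, citing it to the external preprint \cite{nordo2019prod}, so there is no in-paper argument to compare against. Both of the routes you sketch are sound. The soft-point route is carefully set up: Proposition \ref{pro:softpointsoftbelongingtosoftproduct} is a genuine biconditional, so using it forward for the family $\left\{(F_i,\E_i)\right\}_{i\in I}$ and backward for $\left\{(G_i,\E_i)\right\}_{i\in I}$ is legitimate, and Proposition \ref{pro:softinclusionbysoftpoint} applies both coordinatewise (with universe $\U_i$ and parameters $\E_i$) and at the product level (with universe $\prod_{i\in I}\U_i$ and parameters $\prod_{i\in I}\E_i$), since every soft point of the product decomposes as $p=\langle p_i\rangle_{i\in I}$, $\alpha=\langle\alpha_i\rangle_{i\in I}$. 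That said, the direct argument you mention at the end is the more economical one and is what the surrounding propositions (e.g.\ the proof of Proposition \ref{pro:softslabassoftproduct}) suggest as the house style: by Definition \ref{def:softproductofsoftsets} and Definition \ref{def:softsubset}, the claim reduces immediately to $\prod_{i\in I}F_i(e_i)\subseteq\prod_{i\in I}G_i(e_i)$ for each $\langle e_i\rangle_{i\in I}$, which is the ordinary monotonicity of the Cartesian product. Either version is acceptable; if you present the soft-point version, keep the sentence you already have flagging which family each application of the biconditional refers to.
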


\begin{proposition}{\rm\cite{kazanci}}
\label{pro:distributivepropertyofsoftproductrespecttosoftintersection}
Let $\left\{(F_i,\E_i) \right\}_{i\in I}$ and $\left\{(G_i,\E_i) \right\}_{i\in I}$
be two families of soft sets over a universe set $\U_i$
with respect to a set of parameters $\E_i$ (with $i \in I$), then it results:
$$\softprod_{i \in I} \left( (F_i,\E_i) \softcap (G_i, \E_i) \right)
\, \softequal \, \softprod_{i \in I} (F_i,\E_i) \, \softcap \, \softprod_{i \in I} (G_i,\E_i) \, . $$
\end{proposition}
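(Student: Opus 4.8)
The plan is to reduce the claimed soft equality to the corresponding identity for ordinary Cartesian products, parameter by parameter. By Remark \ref{rem:softequal}, two soft sets over the same universe and the same parameter set are soft equal if and only if their set-valued mappings coincide on every parameter; here the common parameter set is $\prod_{i \in I} \E_i$ and the common universe is $\prod_{i \in I} \U_i$. So I would fix an arbitrary $\langle e_i \rangle_{i \in I} \in \prod_{i \in I} \E_i$ and compute the value of each side at this parameter.

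First I would unwind the left-hand side. By Definition \ref{def:softintersection} we have $(F_i,\E_i) \softcap (G_i,\E_i) \softequal (F_i \cap G_i, \E_i)$ with $(F_i \cap G_i)(e_i) = F_i(e_i) \cap G_i(e_i)$, and then Definition \ref{def:softproductofsoftsets} gives that the left-hand side evaluated at $\langle e_i \rangle_{i \in I}$ equals $\prod_{i \in I}\bigl(F_i(e_i) \cap G_i(e_i)\bigr)$. Next I would unwind the right-hand side: by Definition \ref{def:softproductofsoftsets} the two soft products evaluate at $\langle e_i \rangle_{i \in I}$ to $\prod_{i \in I} F_i(e_i)$ and $\prod_{i \in I} G_i(e_i)$ respectively, so by Definition \ref{def:softintersection} their soft intersection evaluates to $\bigl(\prod_{i \in I} F_i(e_i)\bigr) \cap \bigl(\prod_{i \in I} G_i(e_i)\bigr)$.

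At this point the whole proposition reduces to the purely set-theoretic identity
$$ \prod_{i \in I}\bigl(F_i(e_i) \cap G_i(e_i)\bigr) \;=\; \Bigl(\prod_{i \in I} F_i(e_i)\Bigr) \cap \Bigl(\prod_{i \in I} G_i(e_i)\Bigr), $$
which I would verify by a routine element chase: a tuple $\langle x_i \rangle_{i \in I}$ lies in the left-hand product exactly when $x_i \in F_i(e_i)$ and $x_i \in G_i(e_i)$ for every $i \in I$, which is precisely the condition for it to lie in both $\prod_{i \in I} F_i(e_i)$ and $\prod_{i \in I} G_i(e_i)$. Since $\langle e_i \rangle_{i \in I}$ was arbitrary, the two set-valued mappings agree everywhere, and the soft equality follows.

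I expect no serious obstacle: the entire content sits in the elementary identity above, and the only thing requiring care is keeping the two layers of indexing straight, namely the index $i \in I$ running over the factors of the product versus the parameter-by-parameter comparison demanded by Remark \ref{rem:softequal}. An alternative, arguably cleaner, route would argue through soft points: use Proposition \ref{pro:softinclusionbysoftpoint} to reduce the soft equality to double soft inclusion, Proposition \ref{pro:softpointsoftbelongingtosoftproduct} to pass a soft point of the product to its coordinates, and the observation that a soft point soft belongs to a soft intersection if and only if it soft belongs to both factors; the logical heart would then be the distribution of the universal quantifier over $i \in I$ across a conjunction, which is again elementary.
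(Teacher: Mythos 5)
Your proof is correct. Note that the paper itself gives no proof of this proposition --- it is quoted from the reference \cite{kazanci} --- so there is no in-paper argument to compare against; but your parameter-by-parameter reduction via Remark \ref{rem:softequal}, ending in the elementary identity $\prod_{i \in I}\bigl(A_i \cap B_i\bigr) = \bigl(\prod_{i \in I} A_i\bigr) \cap \bigl(\prod_{i \in I} B_i\bigr)$, is exactly the technique the paper uses for the analogous statements it does prove (e.g.\ Propositions \ref{pro:softslabassoftproduct} and \ref{pro:softnslabassoftproduct}, which also conclude by evaluating the two set-valued mappings at an arbitrary $\langle e_i \rangle_{i \in I} \in \prod_{i \in I} \E_i$ and invoking Remark \ref{rem:softequal}). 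Your alternative route through soft points, using Propositions \ref{pro:softinclusionbysoftpoint} and \ref{pro:softpointsoftbelongingtosoftproduct}, is equally sound and would also be consistent with the paper's toolkit.
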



According to Remark \ref{rem:sameparameter} the following notions by
Kharal and Ahmad have been simplified and slightly modified for soft sets
defined on a common parameter set.

\begin{definition}{\rm\cite{kharal}}
\label{def:softmapping}
Let $\SSG{\U}{\E}$ and $\SSG{\U'}{\E'}$ be two sets of soft open sets
over the universe sets $\U$ and $\U'$ with respect to the sets of parameters $\E$ and $\E'$, respectively.
and consider a mapping $\varphi: \U \to \U'$ between the two universe sets and
a mapping $\psi: \E \to \E'$ between the two set of parameters.
The mapping $\varphi_\psi : \SSG{\U}{\E} \to \SSG{\U'}{\E'}$
which maps every soft set $(F,\E)$ of $\SSG{\U}{\E}$
to a soft set $(\varphi_\psi(F), \E')$ of $\SSG{\U'}{\E'}$,
denoted by $\varphi_\psi (F,\E)$,
where $\varphi_\psi(F) : \E' \to \PP(\U')$ is the set-valued mapping defined by
$\varphi_\psi(F)(e') = \bigcup \left\{ \varphi(F(e)) : e\in \psi^{-1}(\{e'\}) \right\}$
for every $e' \in \E'$,
is called a \df{soft mapping} from $\U$ to $\U'$ induced by the mappings $\varphi$ and $\psi$,
while the soft set $\varphi_\psi (F,\E) \softequal (\varphi_\psi(F), \E')$
is said to be the \df{soft image} of the soft set $(F,\E)$
under the soft mapping $\varphi_\psi : \SSG{\U}{\E} \to \SSG{\U'}{\E'}$.
\\
The soft mapping $\varphi_\psi : \SSG{\U}{\E} \to \SSG{\U'}{\E'}$ is said
\df{injective} (respectively \df{surjective}, \df{bijective})
if the mappings $\varphi: \U \to \U'$ and $\psi: \E \to \E'$ are both
injective (resp. surjective, bijective).
\end{definition}

\begin{remark}
\label{rem:softmapping}
In other words a soft mapping $\varphi_\psi : \SSG{\U}{\E} \to \SSG{\U'}{\E'}$
matches every set-valued mapping $F: \E \to \PP(\U)$
to a set-valued mapping $\varphi_\psi(F) : \E' \to \PP(\U')$
which, for every $e' \in \E'$, is defined by
$$
\varphi_\psi(F)(e') = \left\{
\begin{array}{ll}
\bigcup_{e\in \psi^{-1}(\{e'\})} \varphi(F(e))
& \text{ if } \psi^{-1}(\{e'\}) \ne \emptyset
\\[2mm]
\emptyset & \text{ otherwise}
\end{array}
\right. .
$$
In particular, if the soft mapping $\varphi_\psi$ is bijective,
the set-valued mapping $\varphi_\psi(F) : \E' \to \PP(\U')$
is defined simply by
$\varphi_\psi(F)(e') = \varphi \left( F \left(  \psi^{-1}(e') \right) \right)$,
for every $e' \in \E'$.
\\
Let us also note that in some paper (see, for example, \cite{varol2013}) the soft mapping
$\varphi_\psi$ is denoted with  $(\varphi, \psi) : \SSG{\U}{\E} \to \SSG{\U'}{\E'}$.
\end{remark}

It is worth noting that soft mappings between soft sets behaves similarly
to usual (crisp) mappings in the sense that they maps soft points to soft points,
as proved in the following property.

\begin{proposition}
\label{pro:softimageofasoftpoint}
Let $\varphi_\psi : \SSG{\U}{\E} \to \SSG{\U'}{\E'}$ be a soft mapping
induced by the mappings $\varphi: \U \to \U'$ and $\psi: \E \to \E'$
between the two sets $\SSG{\U}{\E}, \SSG{\U'}{\E'}$ of soft sets.
and consider a soft point $(p_\alpha,\E)$ of $\SPE$.
Then the soft image $\varphi_\psi (p_\alpha,\E)$ of the soft point $(p_\alpha,\E)$
under the soft mapping $\varphi_\psi$
is the soft point $\left( {\varphi(p)}_{\psi(\alpha)}, \E'\right)$,
i.e. $\varphi_\psi (p_\alpha,\E) \softequal \left( {\varphi(p)}_{\psi(\alpha)}, \E'\right)$.
\end{proposition}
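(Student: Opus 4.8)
The plan is to compute the soft image $\varphi_\psi(p_\alpha, \E)$ directly from Definition \ref{def:softmapping} and show that the resulting set-valued mapping agrees, approximation by approximation, with the set-valued mapping $\varphi(p)_{\psi(\alpha)}$ described in Remark \ref{rem:softpoint}. Since soft equality is pointwise by Remark \ref{rem:softequal}, it suffices to verify that $\varphi_\psi(p_\alpha)(e') = \varphi(p)_{\psi(\alpha)}(e')$ for every parameter $e' \in \E'$.

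First I would fix an arbitrary $e' \in \E'$ and apply the defining formula
\[
\varphi_\psi(p_\alpha)(e') = \bigcup \left\{ \varphi(p_\alpha(e)) : e \in \psi^{-1}(\{e'\}) \right\}.
\]
The key observation is that the set-valued mapping $p_\alpha$ is null on every parameter except $\alpha$, where it takes the value $\{p\}$. Hence each term $\varphi(p_\alpha(e))$ in the union is empty unless $e = \alpha$, in which case it equals $\varphi(\{p\}) = \{\varphi(p)\}$. The whole union therefore collapses to $\{\varphi(p)\}$ precisely when $\alpha$ lies in $\psi^{-1}(\{e'\})$, and to the empty set otherwise.

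Next I would split into the two cases determined by whether $\alpha \in \psi^{-1}(\{e'\})$, i.e.\ whether $\psi(\alpha) = e'$. When $e' = \psi(\alpha)$, the parameter $\alpha$ belongs to $\psi^{-1}(\{e'\})$, so the union contains the nonempty term $\{\varphi(p)\}$ and all other terms are empty, giving $\varphi_\psi(p_\alpha)(e') = \{\varphi(p)\}$. When $e' \ne \psi(\alpha)$, the preimage $\psi^{-1}(\{e'\})$ does not contain $\alpha$, so every term of the union is empty and $\varphi_\psi(p_\alpha)(e') = \emptyset$. These two outcomes are exactly the two branches of the set-valued mapping $\varphi(p)_{\psi(\alpha)}$ from Remark \ref{rem:softpoint}, with expressive parameter $\psi(\alpha)$ and support $\{\varphi(p)\}$.

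I do not expect a genuine obstacle here; the statement is essentially a bookkeeping verification. The one point requiring minor care is the behaviour when $\psi^{-1}(\{e'\})$ is empty or when several parameters map to the same $e'$: one must note that emptiness of the preimage forces $\varphi_\psi(p_\alpha)(e') = \emptyset$ (consistent with $e' \ne \psi(\alpha)$), and that even if $\psi$ is non-injective, the only parameter in any fibre that contributes a nonempty value is $\alpha$ itself, so no unwanted overlap arises. Concluding, since the two set-valued mappings coincide on every $e' \in \E'$, we obtain $\varphi_\psi(p_\alpha, \E) \softequal \left( \varphi(p)_{\psi(\alpha)}, \E' \right)$, as required.
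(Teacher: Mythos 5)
Your proposal is correct and follows essentially the same route as the paper's own proof: both compute $\varphi_\psi(p_\alpha)(e')$ from Definition \ref{def:softmapping}, observe that only the parameter $\alpha$ contributes a nonempty term to the union, and split into the cases $e' = \psi(\alpha)$ (yielding $\{\varphi(p)\}$) and $e' \ne \psi(\alpha)$ (yielding $\emptyset$). Your explicit remarks on empty fibres and non-injective $\psi$ are a slightly more careful treatment of points the paper handles implicitly, but the argument is the same.
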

\begin{proof}
Let $(p_\alpha,\E)$ be a soft point of $\SPE$, by Definition \ref{def:softmapping},
its soft image $\varphi_\psi (p_\alpha,\E) $
under the soft mapping $\varphi_\psi : \SSG{\U}{\E} \to \SSG{\U'}{\E'}$
is the soft set $\left( \varphi_\psi \left( p_\alpha \right) , \E' \right)$
corresponding to the set-valued mapping
$\varphi_\psi \left( p_\alpha \right) : \E' \to \PP(\U')$
which, for every $e' \in \E'$, is defined by
$\varphi_\psi(p_\alpha)(e') = \bigcup \left\{ \varphi(p_\alpha(e))
: e\in \psi^{-1}(\{e'\}) \right\}$.
Now, if $e' = \psi(\alpha)$ we have that:
\begin{align*}
\varphi_\psi(p_\alpha) \left( \psi(\alpha) \right) &= \bigcup \left\{ \varphi(p_\alpha(e))
  : e\in \psi^{-1}(\{ \psi(\alpha) \}) \right\}
\\
&= \bigcup \left\{ \varphi\left(p_\alpha(e)\right) : e\in \E , \, \psi(e)=\psi(\alpha) \right\}
\\
&= \bigcup \left\{ \varphi\left(p_\alpha(e)\right) : e = \alpha  \right\}
  \cup
  \bigcup \left\{ \varphi\left(p_\alpha(e)\right) :
  e\in \E\setminus \{ \alpha \} , \psi(e)=\psi(\alpha) \right\}
\\
&= \left\{ \varphi \left( p_\alpha(\alpha) \right) \right\}
  \cup
  \bigcup \left\{ \varphi\left(p_\alpha(e)\right)= \emptyset :
  e\in \E\setminus \{ \alpha \} , \, \psi(e)=e' \right\}
\\
&= \left\{ \varphi (p)  \right\} \cup \emptyset
\\
&= \left\{ \varphi (p)  \right\}
\end{align*}
while, for every $e' \in \E' \setminus \left\{ \psi(\alpha) \right\}$,
we have that $\psi(\alpha) \ne e'$ and so it follows that:
\begin{align*}
\varphi_\psi(p_\alpha)(e') &= \bigcup \left\{ \varphi(p_\alpha(e))
: e\in \psi^{-1}(\{e'\}) \right\}
\\
&= \bigcup \left\{ \varphi\left(p_\alpha(e)\right) : e\in \E , \, \psi(e)=e' \right\}
\\
&= \bigcup \left\{ \varphi\left(p_\alpha(e)\right) : e\in \E \setminus \{\alpha\}, \, \psi(e)=e' \right\}
\\
&= \bigcup \left\{ \varphi\left(p_\alpha(e)\right)=\emptyset : e\in \E \setminus \{\alpha\}, \, \psi(e)=e' \right\}
\\
&= \emptyset \, .
\end{align*}
This proves that the set-valued mapping $\varphi_\psi \left( p_\alpha \right) : \E' \to \PP(\U')$
sends the parameter $\psi(\alpha)$ to the singleton $ \left\{ \varphi (p)  \right\} $
and maps every other parameters of $\E' \setminus \left\{ \psi(e) \right\}$ to the empty set,
and so, by Definition \ref{def:softpoint}, this means that
the soft image $\varphi_\psi (p_\alpha,\E)$ of the soft point $(p_\alpha,\E) \in \SPE$
under the soft mapping $\varphi_\psi : \SSG{\U}{\E} \to \SSG{\U'}{\E'}$
is the soft point in $\SPG{\U'}{\E'}$
having $\left\{ \varphi(p) \right\}$ as support set
and $\psi(\alpha)$ as expressive parameter,
that is $\left( {\varphi(p)}_{\psi(\alpha)}, \E'\right)$.
\end{proof}

\begin{corollary}
\label{cor:injectivityofasoftmapping}
Let $\varphi_\psi : \SSG{\U}{\E} \to \SSG{\U'}{\E'}$ be a soft mapping
induced by the mappings $\varphi: \U \to \U'$ and $\psi: \E \to \E'$
between the two sets $\SSG{\U}{\E}, \SSG{\U'}{\E'}$ of soft sets,
then $\varphi_\psi$ is injective if and only if
its soft images of every distinct pair of soft points are distinct too, i.e. if
for every $(p_\alpha,\E), (q_\beta,\E) \in \SPE$ such that $(p_\alpha,\E) \softnotequal (q_\beta,\E)$
it follows that $\varphi_\psi (p_\alpha,\E) \softnotequal \varphi_\psi (q_\beta,\E)$.
\end{corollary}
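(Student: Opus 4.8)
The plan is to translate the statement entirely into conditions on the two underlying crisp maps $\varphi$ and $\psi$, exploiting two facts already at our disposal: by Definition \ref{def:softmapping}, $\varphi_\psi$ is injective precisely when both $\varphi$ and $\psi$ are injective, and by Proposition \ref{pro:softimageofasoftpoint}, the soft image of a soft point $(p_\alpha,\E)$ is the soft point $\left({\varphi(p)}_{\psi(\alpha)},\E'\right)$. Combined with the characterization of soft distinctness in Definition \ref{def:distinctssoftpoints} (namely that $(p_\alpha,\E)\softnotequal(q_\beta,\E)$ means $p\ne q$ or $\alpha\ne\beta$), this reduces the whole equivalence to elementary statements about $\varphi$ and $\psi$.

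For the necessity direction I would assume $\varphi_\psi$ injective, hence $\varphi$ and $\psi$ both injective, and take a soft distinct pair $(p_\alpha,\E)\softnotequal(q_\beta,\E)$, so that $p\ne q$ or $\alpha\ne\beta$. Their soft images are $\left({\varphi(p)}_{\psi(\alpha)},\E'\right)$ and $\left({\varphi(q)}_{\psi(\beta)},\E'\right)$, and a short case split finishes it: if $p\ne q$ the injectivity of $\varphi$ gives $\varphi(p)\ne\varphi(q)$, while if $\alpha\ne\beta$ the injectivity of $\psi$ gives $\psi(\alpha)\ne\psi(\beta)$; in either case the two image soft points are soft distinct by Definition \ref{def:distinctssoftpoints}.

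For the sufficiency direction I would establish the injectivity of $\varphi$ and of $\psi$ separately, in each case by constructing a test pair of soft points that differs in only one coordinate. To see that $\varphi$ is injective, fix any $\alpha\in\E$ and suppose $p\ne q$; then $(p_\alpha,\E)\softnotequal(q_\alpha,\E)$, so by hypothesis the images $\left({\varphi(p)}_{\psi(\alpha)},\E'\right)$ and $\left({\varphi(q)}_{\psi(\alpha)},\E'\right)$ are soft distinct, and since their expressive parameters coincide this forces $\varphi(p)\ne\varphi(q)$. Symmetrically, to see that $\psi$ is injective, fix any $p\in\U$ and suppose $\alpha\ne\beta$; then $(p_\alpha,\E)\softnotequal(p_\beta,\E)$, so their images are soft distinct, and since their support sets coincide this forces $\psi(\alpha)\ne\psi(\beta)$. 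Having both maps injective yields the injectivity of $\varphi_\psi$ by Definition \ref{def:softmapping}.

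The only delicate point I anticipate is the construction of these test soft points in the sufficiency direction, which silently requires $\E$ and $\U$ to be nonempty. The parameter set $\E$ is nonempty by the standing hypothesis of Definition \ref{def:softset}, and one needs the universe $\U$ to be nonempty as well so that a soft point $(p_\alpha,\E)$ actually exists when probing the injectivity of $\psi$; under the usual convention that the universe is nonempty this causes no trouble, and it is the sole place where the equivalence could degenerate.
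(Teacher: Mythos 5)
Your argument is correct and uses exactly the ingredients the paper itself invokes — Definition \ref{def:softmapping} (injectivity of $\varphi_\psi$ means injectivity of both $\varphi$ and $\psi$), Proposition \ref{pro:softimageofasoftpoint}, and Definition \ref{def:distinctssoftpoints} — the paper simply leaves the details as \lq\lq easily derived\rq\rq\ while you write them out, including the useful test-point constructions for the converse. Your closing remark about the nonemptiness of $\U$ and $\E$ is a legitimate (if minor) point of care that the paper glosses over.
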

\begin{proof}
It easily derives from Definitions \ref{def:distinctssoftpoints} and \ref{def:softmapping},
and Proposition \ref{pro:softimageofasoftpoint}.
\end{proof}


\begin{definition}{\rm\cite{kharal}}
\label{def:softinverseimage}
Let $\varphi_\psi : \SSG{\U}{\E} \to \SSG{\U'}{\E'}$ be a soft mapping
induced by the mappings $\varphi: \U \to \U'$ and $\psi: \E \to \E'$
between the two sets $\SSG{\U}{\E}, \SSG{\U'}{\E'}$ of soft sets
and consider a soft set $(G,\E')$ of $\SSG{\U'}{\E'}$.
The \df{soft inverse image} of $(G,\E')$ under the soft mapping
$\varphi_\psi : \SSG{\U}{\E} \to \SSG{\U'}{\E'}$,
denoted by $\varphi_\psi^{-1} (G,\E')$ is the
soft set $(\varphi_\psi^{-1}(G), \E')$ of $\SSG{\U}{\E}$
where $\varphi_\psi^{-1}(G) : \E \to \PP(\U)$ is the set-valued mapping
defined by $\varphi_\psi^{-1}(G)(e) = \varphi^{-1}\left( G\left( \psi(e) \right) \right)$
for every $e \in \E$.
\end{definition}

\begin{proposition}{\rm\cite{aygunoglu,kharal,zorlutuna}}
\label{pro:propertiesofsoftmappings}
Let $\varphi_\psi : \SSG{\U}{\E} \to \SSG{\U'}{\E'}$ be a soft mapping
induced by the mappings $\varphi: \U \to \U'$ and $\psi: \E \to \E'$
and let $(F,\E), (F_i, \E) \in \SSG{\U}{\E}$ and $(G,\E'),(G_i, \E') \in \SSG{\U'}{\E'}$
be soft sets over $\U$ and $\U'$, respectively, then the following hold:
\begin{enumerate}[label={\rm(\arabic*)}]
\item $\varphi_\psi\!\nullsoftsetG{\E}  \softequal  \nullsoftsetG{\E'} \!.$
\item $\varphi_\psi^{-1}\!\nullsoftsetG{\E'} \softequal \nullsoftsetG{\E} \!.$
\item $\varphi_\psi^{-1}\!\absolutesoftsetG{\U'}{\E'}  \softequal  \absolutesoftsetG{\U}{\E} \!.$
\item $(F,\E) \, \softsubseteq \, \varphi_\psi^{-1} \!\left( \varphi_\psi (F,\E) \right)$
  and the soft equality holds when $\varphi_\psi$ is injective.
\item $\varphi_\psi \! \left( \varphi_\psi^{-1} (G,\E') \right) \softsubseteq \, (G,\E')$
  and the soft equality holds when $\varphi_\psi$ is surjective.
\item $\varphi_\psi^{-1} \! \left( (G,\E')^\complement \right) \softequal
  \left( \varphi_\psi^{-1}(G,\E') \right)^{\!\complement} \!.$
\item if $(F_1, \E) \, \softsubseteq \, (F_2,\E) .$
  then $\varphi_\psi (F_1,\E) \, \softsubseteq \, \varphi_\psi (F_2,\E) .$
\item if $(G_1, \E') \, \softsubseteq \, (G_2,\E') .$
  then $\varphi_\psi^{-1} (G_1,\E') \, \softsubseteq \, \varphi_\psi^{-1} (G_2,\E') .$
\item $\varphi_\psi \! \left( \softbigcup_{i \in I} (F_i,\E) \right)
  \softequal \,\, \softbigcup_{i \in I} \varphi_\psi(F_i,\E) .$
\item $\softbigcap_{i \in I} \varphi_\psi(F_i,\E) \; \softsubseteq \;
 \varphi_\psi \! \left( \softbigcap_{i \in I} (F_i,\E) \right) \!.$
\item $\varphi_\psi^{-1} \! \left( \softbigcup_{i \in I} (G_i,\E') \right)
  \softequal \; \softbigcup_{i \in I} \varphi_\psi^{-1}(G_i,\E') .$
\item $\varphi_\psi^{-1} \! \left( \softbigcap_{i \in I} (G_i,\E') \right)
  \softequal \; \softbigcap_{i \in I} \varphi_\psi^{-1}(G_i,\E') .$
\end{enumerate}
\end{proposition}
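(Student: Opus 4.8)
The plan is to reduce each of the twelve assertions to a standard identity or inclusion for crisp images and preimages, evaluated separately at each parameter. This reduction is legitimate because soft inclusion and soft equality are defined coordinatewise: by Definition \ref{def:softsubset} and Remark \ref{rem:softequal}, to establish $(H_1,\E)\softsubseteq(H_2,\E)$ (respectively a soft equality) it suffices to verify $H_1(e)\subseteq H_2(e)$ (respectively $H_1(e)=H_2(e)$) for every $e$. Thus the whole proposition becomes a catalogue of pointwise set-theoretic facts, the only genuine bookkeeping being the fibre unions in the definition of the soft image (Definition \ref{def:softmapping} and Remark \ref{rem:softmapping}).

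First I would clear the routine items. For (1)--(3) I evaluate the defining maps: the soft image of the null map is a union of copies of $\varphi(\emptyset)=\emptyset$, while $\varphi_\psi^{-1}(G)(e)=\varphi^{-1}(G(\psi(e)))$ sends the null and absolute soft sets to $\varphi^{-1}(\emptyset)=\emptyset$ and $\varphi^{-1}(\U')=\U$. Items (6), (7), (8), (11) and (12) follow the same way from the crisp facts $\varphi^{-1}(B^\complement)=(\varphi^{-1}(B))^\complement$, the monotonicity of $\varphi$ and $\varphi^{-1}$, and the commutation of $\varphi^{-1}$ with arbitrary unions and intersections; in each case I substitute the formula of Definition \ref{def:softinverseimage} and apply the corresponding crisp identity at the fixed parameter $\psi(e)$. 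These are purely mechanical.

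The items involving the soft image require rearranging a double union. For (9) I compute, using Definition \ref{def:generalizedsoftunion}, that $\varphi_\psi(\softbigcup_{i}(F_i,\E))(e')=\bigcup\{\varphi((\bigcup_i F_i)(e)):e\in\psi^{-1}(\{e'\})\}$, and the crisp identity $\varphi(\bigcup_i A_i)=\bigcup_i\varphi(A_i)$ together with a swap of the two fixed-range unions recovers $(\softbigcup_i\varphi_\psi(F_i,\E))(e')$, giving a soft equality. For (10) the analogous computation with intersections only produces a one-sided relation, since at the crisp level $\varphi(\bigcap_i A_i)\subseteq\bigcap_i\varphi(A_i)$ may be strict; I would therefore check the direction of the stated soft inclusion carefully against this crisp inclusion, this being the sole place where soft image and soft preimage behave asymmetrically.

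Finally, the composition estimates (4) and (5) are where I expect the fibre union to be the real obstacle, since one must keep track of which parameter indexes the inner union. For (4) I unwind $\varphi_\psi^{-1}(\varphi_\psi(F))(e)=\varphi^{-1}\big(\bigcup\{\varphi(F(\bar e)):\bar e\in\psi^{-1}(\{\psi(e)\})\}\big)$ and note that the fibre $\psi^{-1}(\{\psi(e)\})$ always contains $e$, so the inner union dominates $\varphi(F(e))$ and hence $F(e)\subseteq\varphi^{-1}(\varphi(F(e)))$ lands inside the right-hand side; when $\varphi_\psi$ is injective both $\psi$ and $\varphi$ are injective, the fibre collapses to $\{e\}$ and $\varphi^{-1}\circ\varphi$ is the identity, upgrading the inclusion to equality. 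For (5) I symmetrically expand $\varphi_\psi(\varphi_\psi^{-1}(G))(e')$ and observe that every $e\in\psi^{-1}(\{e'\})$ satisfies $\psi(e)=e'$, so the inner term equals $\varphi(\varphi^{-1}(G(e')))\subseteq G(e')$; surjectivity of $\psi$ makes the fibre nonempty and surjectivity of $\varphi$ gives $\varphi\circ\varphi^{-1}=\mathrm{id}$, yielding equality. The only additional care is the empty-fibre case, which the piecewise formula of Remark \ref{rem:softmapping} handles automatically.
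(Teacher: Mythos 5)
The paper gives no proof of this proposition---it is recalled from the cited references---so there is no in-paper argument to compare your route against. Your coordinatewise reduction to crisp identities at each parameter, with the fibre union of Definition \ref{def:softmapping} as the only real bookkeeping, is the standard argument and it correctly disposes of items (1)--(9), (11) and (12). Your treatment of (4) and (5) is exactly right: the membership $e\in\psi^{-1}(\{\psi(e)\})$ yields the inclusion in (4), the identity $\psi(e)=e'$ on the fibre yields (5), and injectivity (resp.\ surjectivity) of both $\varphi$ and $\psi$ upgrades these to equalities, with the empty-fibre case harmless.

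The one genuine problem is item (10), and your instinct to ``check the direction of the stated soft inclusion carefully'' must be carried to its conclusion: the inclusion as printed is reversed and cannot be proved. From the crisp fact $\varphi\bigl(\bigcap_i A_i\bigr)\subseteq\bigcap_i\varphi(A_i)$ one gets, for every $e'$, that $\bigcup_{e\in\psi^{-1}(\{e'\})}\varphi\bigl(\bigcap_i F_i(e)\bigr)\subseteq\bigcap_i\bigcup_{e\in\psi^{-1}(\{e'\})}\varphi(F_i(e))$, i.e.\ $\varphi_\psi\bigl(\softbigcap_{i\in I}(F_i,\E)\bigr)\softsubseteq\softbigcap_{i\in I}\varphi_\psi(F_i,\E)$, which is the opposite of what is asserted. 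The asserted direction already fails in the simplest setting: take $\U=\{1,2\}$, $\U'=\{a\}$, $\E=\E'$ a singleton, $\psi$ the identity, $\varphi$ constant, and $F_1(e)=\{1\}$, $F_2(e)=\{2\}$; then $\softbigcap_i\varphi_\psi(F_i,\E)$ has approximation $\{a\}$ while $\varphi_\psi\bigl(\softbigcap_i(F_i,\E)\bigr)$ is the null soft set. So for (10) you should either prove the corrected inclusion or flag the printed statement as an erratum inherited from the sources; as written, your proposal neither proves nor refutes it.
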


\begin{proposition}{\rm\cite{kharal}}
\label{pro:monotonicsoftimagesandsoftinverseimages}
Let $\varphi_\psi : \SSG{\U}{\E} \to \SSG{\U'}{\E'}$ be a soft mapping
induced by the mappings $\varphi: \U \to \U'$ and $\psi: \E \to \E'$
and let $(F,\E), (G, \E) \in \SSG{\U}{\E}$  and $(F',\E'),(G', \E') \in \SSG{\U'}{\E'}$
be soft sets over $\U$ and $\U'$, respectively, then the following hold:
\begin{enumerate}[label={\rm(\arabic*)}]
\item $(F,\E) \, \softsubseteq \, (G,\E)$ implies
  $\varphi_\psi (F,\E) \, \softsubseteq \, \varphi_\psi (G,\E) .$
\item $(F',\E) \, \softsubseteq \, (G',\E')$ implies
  $\varphi_\psi^{-1}(F',\E') \, \softsubseteq \, \varphi_\psi^{-1}(G',\E') .$
\end{enumerate}
\end{proposition}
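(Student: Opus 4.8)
The plan is to reduce each of the two soft inclusions to an ordinary (crisp) inclusion of subsets of the universe, parameter by parameter, and then to invoke the elementary fact that both the image and the preimage of a map preserve set inclusion. Throughout, I would unwind the soft subset relation via Definition~\ref{def:softsubset}, which says that $(F,\E) \softsubseteq (G,\E)$ is equivalent to $F(e) \subseteq G(e)$ for every $e \in \E$; the whole argument then comes down to checking the corresponding inclusion of approximations at each parameter of the target (respectively source) parameter set.

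For part~(1), I would start from the hypothesis $(F,\E) \softsubseteq (G,\E)$, so that $F(e) \subseteq G(e)$ for every $e \in \E$. Fixing an arbitrary $e' \in \E'$, I would compute $\varphi_\psi(F)(e')$ and $\varphi_\psi(G)(e')$ through Definition~\ref{def:softmapping}, namely as the unions $\bigcup \{ \varphi(F(e)) : e \in \psi^{-1}(\{e'\}) \}$ and $\bigcup \{ \varphi(G(e)) : e \in \psi^{-1}(\{e'\}) \}$. Since $F(e) \subseteq G(e)$ gives $\varphi(F(e)) \subseteq \varphi(G(e))$ for each $e$ in the fibre $\psi^{-1}(\{e'\})$, taking unions over that same index set yields $\varphi_\psi(F)(e') \subseteq \varphi_\psi(G)(e')$. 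As $e'$ was arbitrary, Definition~\ref{def:softsubset} gives $\varphi_\psi(F,\E) \softsubseteq \varphi_\psi(G,\E)$.

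For part~(2), I would argue even more directly, since the soft inverse image is defined fibre-wise without any union. From the hypothesis $(F',\E') \softsubseteq (G',\E')$ I have $F'(e') \subseteq G'(e')$ for every $e' \in \E'$. Fixing an arbitrary $e \in \E$ and applying Definition~\ref{def:softinverseimage}, $\varphi_\psi^{-1}(F')(e) = \varphi^{-1}(F'(\psi(e)))$ and $\varphi_\psi^{-1}(G')(e) = \varphi^{-1}(G'(\psi(e)))$. The inclusion $F'(\psi(e)) \subseteq G'(\psi(e))$ is exactly the hypothesis at the parameter $\psi(e)$, and since preimages preserve inclusion it follows that $\varphi_\psi^{-1}(F')(e) \subseteq \varphi_\psi^{-1}(G')(e)$, whence $\varphi_\psi^{-1}(F',\E') \softsubseteq \varphi_\psi^{-1}(G',\E')$.

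I do not expect any genuine obstacle here: the only content is that crisp images and preimages are monotone and that soft images and inverse images are assembled from them at each parameter. In fact these two statements coincide with items~(7) and~(8) of Proposition~\ref{pro:propertiesofsoftmappings}, so an alternative one-line proof would be simply to cite those; I would nonetheless prefer to record the short pointwise verification above for completeness.
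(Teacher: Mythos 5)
Your proof is correct. The paper itself gives no proof of this proposition (it is quoted from the literature without argument), and your parameter-by-parameter verification --- reducing each soft inclusion via Definition~\ref{def:softsubset} to monotonicity of crisp images and preimages, using Definitions~\ref{def:softmapping} and~\ref{def:softinverseimage} --- is exactly the standard argument one would write. Your closing observation is also accurate: the two items are verbatim restatements of Proposition~\ref{pro:propertiesofsoftmappings}(7) and (8), so citing those would indeed suffice.
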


\begin{corollary}
\label{cor:monotonicsoftimagesandsoftinverseimages}
Let $\varphi_\psi : \SSG{\U}{\E} \to \SSG{\U'}{\E'}$ be a soft mapping
induced by the mappings $\varphi: \U \to \U'$ and $\psi: \E \to \E'$.
If $(F,\E) \in \SSG{\U}{\E}$ and $(F',\E') \in \SSG{\U'}{\E'}$
are soft sets over $\U$ and $\U'$, respectively
and $(p_\alpha, \E) \in \SPG{\U}{\E}$ and $(q_\beta, \E') \in \SPG{\U'}{\E'}$
are soft points over $\U$ and $\U'$, respectively, then the following hold:
\begin{enumerate}[label={\rm(\arabic*)}]
\item $(p_\alpha, \E) \softin (F,\E)$ implies
  $\varphi_\psi (p_\alpha, \E) \softin \varphi_\psi (F,\E) .$
\item $(q_\beta, \E') \softin (F',\E')$ implies
  $\varphi_\psi^{-1} (q_\beta, \E') \, \softsubseteq \, \varphi_\psi^{-1}(F',\E') .$
\end{enumerate}
\end{corollary}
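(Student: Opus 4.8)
The plan is to reduce both assertions to the already-proved monotonicity of soft images and soft inverse images (Proposition~\ref{pro:monotonicsoftimagesandsoftinverseimages}), exploiting the fact that, by Definition~\ref{def:softpointsoftbelongstosoftset}, the soft belonging relation $\softin$ of a soft point in a soft set is \emph{by definition} nothing but the soft inclusion $\softsubseteq$ of that soft point, viewed as a soft set. Thus each hypothesis $(\cdot)\softin(\cdot)$ can be rewritten as a soft inclusion, passed through the appropriate monotonicity statement, and then (where possible) re-read as a soft belonging.

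For part~(1), I would first unfold the hypothesis $(p_\alpha,\E)\softin(F,\E)$ via Definition~\ref{def:softpointsoftbelongstosoftset} into the soft inclusion $(p_\alpha,\E)\softsubseteq(F,\E)$. Applying monotonicity of soft images, Proposition~\ref{pro:monotonicsoftimagesandsoftinverseimages}(1), then yields $\varphi_\psi(p_\alpha,\E)\softsubseteq\varphi_\psi(F,\E)$. The key observation that closes the argument is that, by Proposition~\ref{pro:softimageofasoftpoint}, the soft image $\varphi_\psi(p_\alpha,\E)$ is itself a soft point, namely $\left({\varphi(p)}_{\psi(\alpha)},\E'\right)$. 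Consequently the soft inclusion just obtained is, once more by Definition~\ref{def:softpointsoftbelongstosoftset}, exactly the soft belonging $\varphi_\psi(p_\alpha,\E)\softin\varphi_\psi(F,\E)$, as required.

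For part~(2), the same opening step converts the hypothesis $(q_\beta,\E')\softin(F',\E')$ into $(q_\beta,\E')\softsubseteq(F',\E')$, and monotonicity of soft inverse images, Proposition~\ref{pro:monotonicsoftimagesandsoftinverseimages}(2), immediately delivers $\varphi_\psi^{-1}(q_\beta,\E')\softsubseteq\varphi_\psi^{-1}(F',\E')$, which is precisely the stated conclusion. There is no genuine obstacle in either case; the only point deserving attention is the asymmetry between the two parts, which is exactly where the extra input differs. In part~(1) the conclusion is a soft belonging because Proposition~\ref{pro:softimageofasoftpoint} guarantees that the soft image of a soft point is again a soft point, allowing the upgrade from $\softsubseteq$ to $\softin$. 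In part~(2) no analogous upgrade is available, and the conclusion is therefore stated only as a soft inclusion: the soft inverse image of a soft point is in general \emph{not} a soft point, since the fibre $\varphi^{-1}(\{q\})$ may be empty or may contain several points, so there is no soft-point structure on $\varphi_\psi^{-1}(q_\beta,\E')$ to invoke. Recognizing precisely this gap is what accounts for the two different forms of the conclusions, and it is the only subtle aspect of the proof.
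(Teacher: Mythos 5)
Your proof is correct and is exactly the argument the paper intends: the corollary is left unproved there precisely because it follows at once from Definition \ref{def:softpointsoftbelongstosoftset} together with Proposition \ref{pro:monotonicsoftimagesandsoftinverseimages}, with Proposition \ref{pro:softimageofasoftpoint} supplying the upgrade from $\softsubseteq$ to $\softin$ in part (1). Your remark on why part (2) cannot be similarly upgraded (the fibre $\varphi^{-1}(\{q\})$ need not be a singleton) correctly accounts for the asymmetry in the statement.
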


\begin{definition}
\label{def:softinversemapping}
Let $\varphi_\psi : \SSG{\U}{\E} \to \SSG{\U'}{\E'}$ be a bijective soft mapping
induced by the mappings $\varphi: \U \to \U'$ and $\psi: \E \to \E'$.
The \df{soft inverse mapping} of $\varphi_\psi$,
denoted by $\varphi_\psi^{-1}$,
is the soft mapping $\varphi_\psi^{-1} = \left( \varphi^{-1} \right)_{\psi^{-1}} : \SSG{\U'}{\E'} \to \SSG{\U}{\E}$
induced by the inverse mappings $\varphi^{-1}: \U' \to \U$ and $\psi^{-1}: \E' \to \E$
of the mappings $\varphi$ and $\psi$, respectively.
\end{definition}

\begin{remark}
\label{rem:imageosoftinversemapping}
Evidently, the soft inverse mapping $\varphi_\psi^{-1} : \SSG{\U'}{\E'} \to \SSG{\U}{\E}$
of a bijective soft mapping $\varphi_\psi : \SSG{\U}{\E} \to \SSG{\U'}{\E'}$ is also bijective
and its soft image of a soft set in $\SSG{\U'}{\E'}$ coincides with the soft inverse image of
the corresponding soft set under the soft mapping $\varphi_\psi$.
\end{remark}

\begin{definition}{\rm\cite{aygunoglu}}
\label{def:softcompositionofsoftmappings}
Let $\SSG{\U}{\E}, \SSG{\U'}{\E'}$ and $\SSG{\U''}{\E''}$ be three sets of soft open sets
over the universe sets $\U, \U', \U''$ with respect to the sets of parameters $\E, \E', \E''$, respectively,
and $\varphi_\psi : \SSG{\U}{\E} \to \SSG{\U'}{\E'}$, $\gamma_\delta : \SSG{\U}{\E'} \to \SSG{\U'}{\E''}$
be two soft mappings between such sets,
then the \df{soft composition} of the soft mappings $\varphi_\psi $ and $\gamma_\delta$,
denoted by $\gamma_\delta \softcirc \varphi_\psi$ is the
soft mapping $\left( \gamma \circ \varphi \right)_{\delta \circ \psi} : \SSG{\U}{\E} \to \SSG{\U''}{\E''}$
induced by the compositions $\gamma \circ \varphi : \U \to \U''$
of the mappings $\varphi$ and $\gamma$ between the universe sets
and $\delta \circ \psi : \E \to \E''$ of the mappings $\psi$ and $\delta$ between the parameter sets.
\end{definition}



The notion of soft topological spaces as topological spaces defined over a initial universe
with a fixed set of parameters was introduced in 2011 by Shabir and Naz \cite{shabir}.

\begin{definition}{\rm\cite{shabir}}
\label{def:softtopology}
Let $X$ be an initial universe set, $\E$ be a nonempty set of parameters with respect to $X$
and $\Tau \subseteq \SSE[X]$ be a family of soft sets over $X$, we say that
$\Tau$ is a \df{soft topology} on $X$ with respect to $\E$ if the following four conditions are satisfied:
\begin{enumerate}[label={\rm(\roman*)}]
\item the null soft set belongs to $\Tau$, i.e. $\nullsoftset \in \Tau .$
\item the absolute soft set belongs to $\Tau$, i.e. $\absolutesoftset[X] \in \Tau .$
\item the soft intersection of any two soft sets of $\Tau$ belongs to $\Tau$, i.e.
for every $(F,\E), (G,\E) \in \Tau$ then $(F,\E) \softcap (G,\E) \in \Tau .$
\item the soft union of any subfamily of soft sets in $\Tau$ belongs to $\Tau$, i.e.
for every $\left\{(F_i,\E) \right\}_{i\in I} \subseteq \Tau$ then
$\softbigcup_{i \in I} (F_i,\E) \in \Tau .$
\end{enumerate}
The triplet $(X, \Tau, \E)$ is called a \df{soft topological space} (or soft space, for short)
over $X$ with respect to $\E$.
\\
In some case, when it is necessary to better specify the universal set and the set of parameters,
the topology will be denoted by $\Tau(X,\E)$.
\end{definition}

\begin{definition}{\rm\cite{shabir}}
\label{def:softopenset}
Let $(X,\Tau,\E)$ be a soft topological space over $X$ with respect to $\E$,
then the members of $\Tau$ are said to be \df{soft open set} in $X$.
\end{definition}

\begin{definition}{\rm\cite{hazra}}
\label{def:comparisonofsofttopologies}
Let $\Tau_1$ and $\Tau_2$ be two soft topologies over a common universe set $X$
with respect to a set of paramters $\E$.
We say that $\Tau_2$ is \df{finer} (or stronger) than $\Tau_1$
if $\Tau_1 \subseteq \Tau_2$ where $\subseteq$ is the usual set-theoretic relation of inclusion between crisp sets.
In the same situation, we also say that $\Tau_1$ is \df{coarser} (or weaker) than $\Tau_2$.
\end{definition}

\begin{definition}{\rm\cite{shabir}}
\label{def:softclosedset}
Let $(X,\Tau,\E)$ be a soft topological space over $X$ and $(F,\E)$ be a soft set over $X$.
We say that $(F,\E)$ is \df{soft closed set} in $X$ if its complement $(F,\E)^\complement$
is a soft open set, i.e. if $(F,\E)^\complement \in \Tau$.
\end{definition}

\begin{notation}
The family of all soft closed sets of a soft topological space $(X,\Tau,\E)$ over $X$ with respect to $\E$
will be denoted by $\sigma$,
or more precisely with $\sigma(X,\E)$ when it is necessary to specify
the universal set $X$ and the set of parameters $\E$.
\end{notation}

\begin{proposition}{\rm\cite{shabir}}
\label{pro:propertiesofsoftclosedsets}
Let $\sigma$ be the family of soft closed sets of a soft topological space $(X,\Tau,\E)$, the following hold:
\begin{enumerate}[label={\rm(\arabic*)}]
\item the null soft set is a soft closed set, i.e. $\nullsoftset \in \sigma .$
\item the absolute soft set is a soft closed set, i.e. $\absolutesoftset[X] \in \sigma .$
\item the soft union of any two soft closed sets is still a soft closed set, i.e.
for every $(C,\E), (D,\E) \in \sigma$ then $(C,\E) \softcup (D,\E) \in \sigma .$
\item the soft intersection of any subfamily of soft closed sets is still a soft closed set, i.e.
for every $\left\{(C_i,\E) \right\}_{i\in I} \subseteq \sigma$ then
$\softbigcap_{i \in I} (C_i,\E) \in \sigma .$
\end{enumerate}
\end{proposition}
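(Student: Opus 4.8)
The plan is to exploit the duality between soft open and soft closed sets encoded in Definition \ref{def:softclosedset}: a soft set is soft closed precisely when its soft complement is soft open. Each of the four assertions will then be obtained by complementing the corresponding axiom of the soft topology $\Tau$ in Definition \ref{def:softtopology}, using the (generalized) De Morgan laws of Proposition \ref{pro:generalizeddemorganlaws} to commute complementation past soft unions and soft intersections.

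For items (1) and (2) the work reduces to two elementary complement identities. First I would observe, directly from Definitions \ref{def:nullsoftset}, \ref{def:absolutesoftset} and \ref{def:softcomplement}, that $\nullsoftset^\complement \softequal \absolutesoftset[X]$ and $\absolutesoftset[X]^\complement \softequal \nullsoftset$, since complementing the constant map $e \mapsto \emptyset$ yields $e \mapsto X$ and vice versa. Hence the complement of $\nullsoftset$ is $\absolutesoftset[X]$, which lies in $\Tau$ by axiom (ii), so $\nullsoftset \in \sigma$; dually, the complement of $\absolutesoftset[X]$ is $\nullsoftset \in \Tau$ by axiom (i), so $\absolutesoftset[X] \in \sigma$.

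For item (3), given $(C,\E),(D,\E) \in \sigma$ I would note that $(C,\E)^\complement,(D,\E)^\complement \in \Tau$ by Definition \ref{def:softclosedset}, and then compute
\[
\left( (C,\E) \softcup (D,\E) \right)^{\!\complement}
\softequal (C,\E)^\complement \softcap (D,\E)^\complement
\]
via Proposition \ref{pro:generalizeddemorganlaws}(1); the right-hand side belongs to $\Tau$ by axiom (iii), so the soft union is soft closed. Item (4) is entirely analogous: for $\left\{ (C_i,\E) \right\}_{i \in I} \subseteq \sigma$ one has each $(C_i,\E)^\complement \in \Tau$, and Proposition \ref{pro:generalizeddemorganlaws}(2) gives $\left( \softbigcap_{i \in I} (C_i,\E) \right)^{\!\complement} \softequal \softbigcup_{i \in I} (C_i,\E)^\complement$, which lies in $\Tau$ by axiom (iv); hence $\softbigcap_{i \in I} (C_i,\E) \in \sigma$.

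The argument is purely formal and I do not expect any genuine obstacle: the only points demanding a little care are recording the two complement identities for the null and absolute soft sets used in (1)--(2), and invoking the binary De Morgan law in (3) as the two-index instance of Proposition \ref{pro:generalizeddemorganlaws}(1).
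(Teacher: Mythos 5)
Your argument is correct and complete: the paper itself states Proposition \ref{pro:propertiesofsoftclosedsets} without proof (it is quoted from Shabir and Naz), and your complementation--De Morgan argument is exactly the standard justification, with each of the four items dualizing the corresponding axiom of Definition \ref{def:softtopology} via Definition \ref{def:softclosedset} and Proposition \ref{pro:generalizeddemorganlaws}. No gaps to report.
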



\begin{definition}{\rm\cite{aygunoglu}}
\label{def:softbase}
Let $(X,\Tau,\E)$ be a soft topological space over $X$ and $\Bcal \subseteq \Tau$
be a non-empty subset of soft open sets.
We say that $\Bcal$ is a \df{soft open base} for $(X,\Tau,\E)$ if every soft open set of $\Tau$ can be
expressed as soft union of a subfamily of $\Bcal$, i.e. if for every $(F,\E) \in \Tau$
there exists some $\Acal \subset \Bcal$ such that
$(F,\E) = \softbigcup \left\{ (A,\E) : \, (A,\E) \in \Acal \right\}$.
\end{definition}

\begin{proposition}{\rm\cite{nazmul}}
\label{pro:characterizationofsoftopenbase}
Let $(X,\Tau,\E)$ be a soft topological space over $X$ and
$\Bcal \subseteq \Tau$ be a family of soft open sets of $X$.
Then $\Bcal$ is a soft open base for $(X,\Tau,\E)$
if and only if for every soft open set $(F,\E) \in \Tau$ and any soft point $(x_\alpha, \E) \softin (F,\E)$
there exists some soft open set $(B,\E) \in \Bcal$
such that $(x_\alpha, \E) \softin (B,\E) \softsubseteq (F,\E)$.
\end{proposition}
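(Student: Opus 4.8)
The plan is to prove the two implications separately, in each case passing between the ``union'' formulation of soft base given in Definition \ref{def:softbase} and the ``soft point'' formulation of the statement, using the characterization of soft inclusion in Proposition \ref{pro:softinclusionbysoftpoint} as the bridge.

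For the forward implication I would assume that $\Bcal$ is a soft open base and fix an arbitrary soft open set $(F,\E) \in \Tau$ together with a soft point $(x_\alpha, \E) \softin (F,\E)$. By Definition \ref{def:softbase} there exists a subfamily $\Acal \subseteq \Bcal$ such that $(F,\E) \softequal \softbigcup \{ (A,\E) : (A,\E) \in \Acal \}$. The membership $(x_\alpha, \E) \softin (F,\E)$ means $x \in F(\alpha)$ by Definition \ref{def:softpointsoftbelongstosoftset}, and since $F(\alpha) = \bigcup_{(A,\E) \in \Acal} A(\alpha)$ by Definition \ref{def:generalizedsoftunion}, there must exist at least one member $(B,\E) \in \Acal \subseteq \Bcal$ with $x \in B(\alpha)$, i.e. $(x_\alpha, \E) \softin (B,\E)$. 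As $(B,\E)$ is one of the soft sets whose soft union equals $(F,\E)$, it is automatically a soft subset of $(F,\E)$, which yields the desired chain $(x_\alpha, \E) \softin (B,\E) \softsubseteq (F,\E)$.

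For the converse I would assume the pointwise condition and fix an arbitrary $(F,\E) \in \Tau$, aiming to exhibit a subfamily of $\Bcal$ whose soft union is $(F,\E)$. For each soft point $(x_\alpha, \E) \softin (F,\E)$ the hypothesis furnishes some $(B_{x_\alpha}, \E) \in \Bcal$ with $(x_\alpha, \E) \softin (B_{x_\alpha}, \E) \softsubseteq (F,\E)$; I would collect these into the subfamily $\Acal = \{ (B_{x_\alpha}, \E) : (x_\alpha, \E) \softin (F,\E) \} \subseteq \Bcal$ and claim that $(F,\E) \softequal \softbigcup_{(x_\alpha,\E) \softin (F,\E)} (B_{x_\alpha}, \E)$. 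One inclusion is immediate: since every $(B_{x_\alpha},\E) \softsubseteq (F,\E)$, their soft union remains a soft subset of $(F,\E)$. For the reverse inclusion I would argue through Proposition \ref{pro:softinclusionbysoftpoint}: any $(y_\beta, \E) \softin (F,\E)$ soft belongs to its own chosen member $(B_{y_\beta}, \E) \softin \Acal$ and hence to the soft union, so that $(F,\E) \softsubseteq \softbigcup \Acal$. Combining the two inclusions gives the required soft equality, and therefore $\Bcal$ is a soft open base by Definition \ref{def:softbase}.

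I do not anticipate a genuine obstacle, since this is the soft analogue of the classical characterization of a topological base; the only point requiring care is the bookkeeping in the converse, where the indexing of the base members by the soft points of $(F,\E)$ must be handled cleanly and where the passage from pointwise membership to the soft inclusion $(F,\E) \softsubseteq \softbigcup \Acal$ relies essentially on Proposition \ref{pro:softinclusionbysoftpoint}.
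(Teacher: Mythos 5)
Your argument is correct; note that the paper itself states this proposition without proof (importing it from the cited reference), so there is no in-paper argument to compare against, and yours is the standard one: the forward direction extracts a basic member through the pointwise description of the soft union, and the converse assembles the chosen members into a subfamily and closes the circle with Proposition \ref{pro:softinclusionbysoftpoint}, exactly as one would expect. The only pedantic point is the null soft set in the converse: it has no soft points, so your subfamily $\Acal$ is empty, and the soft union of an empty subfamily is not covered by Definition \ref{def:generalizedsoftunion} (which requires a nonempty family); one must adopt the usual convention that the empty soft union is $\nullsoftset$, a convention already implicitly needed for Definition \ref{def:softbase} itself to be satisfiable when $\nullsoftset \notin \Bcal$.
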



\begin{definition}{\rm\cite{zorlutuna}}
\label{pro:softneighbourhoodofasoftpoint}
Let $(X, \Tau, \E)$ be a soft topological space, $(N,\E) \in \SSE[X]$ be a soft set
and $(x_\alpha, \E) \in \SPE[X]$ be a soft point over a common universe $X$.
We say that $(N,\E)$ is a \df{soft neighbourhood} of the soft point $(x_\alpha, \E)$
if there is some soft open set soft containing the soft point and soft contained in the soft set,
that is if there exists some soft open set $(A,\E) \in \Tau$
such that $(x_\alpha, \E) \softin (A,\E) \softsubseteq (N,\E)$.
\end{definition}

\begin{notation}
\label{not:familyofsoftneighbourhoodofasoftpoint}
The family of all soft neighbourhoods
(sometimes also called soft neighbourhoods system) of a soft point $(x_\alpha, \E) \in \SPE[X]$
in a soft topological space $(X, \Tau, \E)$ will be denoted by
$\Ncal_{(x_\alpha, \E)}$
(or more precisely with $\Ncal^\Tau_{(x_\alpha, \E)}$ if it is necessary to specify the topology).
\end{notation}

\begin{definition}{\rm\cite{shabir}}
\label{def:softclosure}
Let $(X,\Tau,\E)$ be a soft topological space over $X$ and $(F,\E)$ be a soft set over $X$.
Then the \df{soft closure} of the soft set $(F,\E)$ with respect to the soft space $(X,\Tau,\E)$,
denoted by $\softcl{(F,\E)}$,
is the soft intersection of all soft closed set over $X$ soft containing $(F,\E)$, that is
$$\softcl{(F,\E)} \softequal \, \softbigcap \left\{ (C,\E) \in \sigma(X,\E) :
\, (F,\E) \softsubseteq (C,\E) \right\} .$$
\end{definition}

\begin{proposition}{\rm\cite{shabir}}
\label{pro:propertiesofsoftclosure}
Let $(X,\Tau,\E)$ be a soft topological space over $X$, and $(F,\E)$ be a soft set over $X$.
Then the following hold:
\begin{enumerate}[label={\rm(\arabic*)}]
\item $\softcl{ \nullsoftset } \softequal \, \nullsoftset .$
\item $\softcl{ \absolutesoftset[X] } \softequal \, \absolutesoftset[X] .$
\item $(F,\E) \, \softsubseteq \, \softcl{(F,\E)} .$
\item $(F,\E)$ is a soft closed set over $X$ if and only if $\softcl{(F,\E)} \softequal \, (F,\E) .$
\item $\softclpar{ \softcl{(F,\E)} } \softequal \, \softcl{(F,\E)} .$
\end{enumerate}
\end{proposition}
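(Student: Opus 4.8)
The plan is to read off all five statements directly from Definition \ref{def:softclosure}, which presents the soft closure of $(F,\E)$ as the soft intersection of the family
$\Scal_{(F,\E)} = \left\{ (C,\E) \in \sigma(X,\E) : (F,\E) \softsubseteq (C,\E) \right\}$,
by exploiting two structural facts. First, this family is never empty: by Proposition \ref{pro:propertiesofsoftclosedsets}(2) the absolute soft set $\absolutesoftset[X]$ is soft closed, and it soft contains every soft set, so $\absolutesoftset[X] \softin \Scal_{(F,\E)}$ in the sense of membership in the family; this guarantees that the soft intersection is taken over a legitimate nonempty subfamily as required by Definition \ref{def:generalizedsoftintersection}. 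Second, and this will serve as the workhorse for parts (4) and (5), the soft closure is itself always a soft closed set, since it is a soft intersection of members of $\sigma(X,\E)$ and such intersections remain soft closed by Proposition \ref{pro:propertiesofsoftclosedsets}(4). I would establish this closedness observation once at the outset and then invoke it twice.

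For part (3) I would argue approximation-wise: every $(C,\E) \in \Scal_{(F,\E)}$ satisfies $F(e) \subseteq C(e)$ for all $e \in \E$ by Definition \ref{def:softsubset}, hence $F(e) \subseteq \bigcap_{(C,\E) \in \Scal_{(F,\E)}} C(e) = \left(\softcl{(F,\E)}\right)(e)$, which is precisely $(F,\E) \softsubseteq \softcl{(F,\E)}$. Part (1) then follows because $\nullsoftset$ is soft closed by Proposition \ref{pro:propertiesofsoftclosedsets}(1) and soft contains itself, so it belongs to $\Scal_{\nullsoftset}$ and forces $\softcl{\nullsoftset} \softsubseteq \nullsoftset$; combined with (3) this gives the soft equality. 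Part (2) is even more direct, since $\absolutesoftset[X]$ is the largest soft set and hence the only member of its own defining family, so the soft intersection collapses to it.

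Parts (4) and (5) are where the opening observation enters. For (4), if $(F,\E)$ is soft closed then $(F,\E) \in \Scal_{(F,\E)}$, so $\softcl{(F,\E)} \softsubseteq (F,\E)$, and together with (3) this yields $\softcl{(F,\E)} \softequal (F,\E)$; conversely, if $\softcl{(F,\E)} \softequal (F,\E)$, then $(F,\E)$ equals a soft closed set by the opening observation, hence is soft closed. Finally, (5) is immediate: the opening observation shows that $\softcl{(F,\E)}$ is soft closed, so applying (4) to it gives $\softclpar{\softcl{(F,\E)}} \softequal \softcl{(F,\E)}$.

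No step here is a genuine obstacle, as these are the soft analogues of the Kuratowski closure axioms and everything reduces to the monotonicity of soft intersection and the closure of $\sigma(X,\E)$ under soft intersection. The only points requiring care are verifying that the defining family is nonempty and keeping all inclusions at the level of approximations via Definition \ref{def:softsubset} and Remark \ref{rem:softequal}, so as not to conflate soft operations with their crisp counterparts.
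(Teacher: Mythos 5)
Your proof is correct and complete: every part reduces, as you observe, to the facts that the defining family is nonempty (it contains the absolute soft set), that the soft closure is itself soft closed by Proposition \ref{pro:propertiesofsoftclosedsets}(4), and to the monotonicity of soft intersection at the level of approximations. The paper offers no proof of this statement --- it is quoted from Shabir and Naz \cite{shabir} as a preliminary --- so there is nothing to compare against; your argument is the standard Kuratowski-style one and would serve as a proof here.
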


\begin{proposition}{\rm\cite{shabir}}
\label{pro:sofsoftclosureofoperators}
Let $(X,\Tau,\E)$ be a soft topological space and
$(F,\E), (G,\E) \in \SSE[X]$ be two soft sets over a common universe $X$.
Then the following hold:
\begin{enumerate}[label={\rm(\arabic*)}]
\item $(F,\E) \, \softsubseteq \, (G,\E)$ implies $\softcl{(F,\E)} \, \softsubseteq \, \softcl{(G,\E)} .$
\item $\softclpar{ (F,\E) \, \softcup \, (G,\E) } \softequal \, \softcl{(F,\E)} \, \softcup \, \softcl{(G,\E)} .$
\item $\softclpar{ (F,\E) \, \softcap \, (G,\E) } \softsubseteq \, \softcl{(F,\E)} \, \softcap \, \softcl{(G,\E)} .$
\end{enumerate}
\end{proposition}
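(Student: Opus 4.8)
The plan is to derive all three statements from the defining formula for the soft closure in Definition \ref{def:softclosure}, together with the closedness properties recorded in Proposition \ref{pro:propertiesofsoftclosedsets} and the extensivity $(F,\E) \softsubseteq \softcl{(F,\E)}$ furnished by Proposition \ref{pro:propertiesofsoftclosure}. The monotonicity statement (1) is the keystone: once it is in hand, statements (2) and (3) follow with only a little extra work.

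For (1), I would argue by a membership-in-the-defining-family trick rather than by manipulating the families of soft closed sets directly. Assume $(F,\E) \softsubseteq (G,\E)$. By Proposition \ref{pro:propertiesofsoftclosedsets}(4) the soft set $\softcl{(G,\E)}$, being a soft intersection of soft closed sets, is itself soft closed, and by Proposition \ref{pro:propertiesofsoftclosure} it soft contains $(G,\E)$; hence by transitivity of the soft inclusion it soft contains $(F,\E)$ as well. Thus $\softcl{(G,\E)}$ is one of the soft closed sets soft containing $(F,\E)$ over which the soft intersection defining $\softcl{(F,\E)}$ is taken. Since a soft intersection is soft contained in each of its members, I conclude $\softcl{(F,\E)} \softsubseteq \softcl{(G,\E)}$.

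For (2), I would establish the two soft inclusions separately. The inclusions $(F,\E) \softsubseteq (F,\E) \softcup (G,\E)$ and $(G,\E) \softsubseteq (F,\E) \softcup (G,\E)$ are immediate from Definition \ref{def:softunion}; combined with part (1) they give that both $\softcl{(F,\E)}$ and $\softcl{(G,\E)}$ are soft contained in $\softclpar{ (F,\E) \softcup (G,\E) }$, whence so is their soft union. For the reverse inclusion I would exhibit $\softcl{(F,\E)} \softcup \softcl{(G,\E)}$ as a soft closed set soft containing $(F,\E) \softcup (G,\E)$: it is soft closed by Proposition \ref{pro:propertiesofsoftclosedsets}(3), and it soft contains $(F,\E) \softcup (G,\E)$ because each soft summand soft contains the corresponding soft set by Proposition \ref{pro:propertiesofsoftclosure}. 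Being a member of the defining family for $\softclpar{ (F,\E) \softcup (G,\E) }$, that closure is soft contained in it, and combining the two inclusions yields the soft equality.

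For (3), monotonicity does essentially all the work: since $(F,\E) \softcap (G,\E)$ is soft contained in both $(F,\E)$ and $(G,\E)$ (immediate from Definition \ref{def:softintersection}), part (1) gives $\softclpar{ (F,\E) \softcap (G,\E) } \softsubseteq \softcl{(F,\E)}$ and $\softclpar{ (F,\E) \softcap (G,\E) } \softsubseteq \softcl{(G,\E)}$, and soft-intersecting the two right-hand members completes the argument. I expect the main obstacle to be purely conceptual rather than computational, namely the direction-reversing behaviour of the soft intersection exploited in (1): passing from a smaller defining family of soft closed sets to a larger soft closure. Handling this through the explicit membership argument above, instead of through set-level manipulation of the two families, keeps the proof clean and avoids appealing to any property of soft intersections not already recorded in the excerpt.
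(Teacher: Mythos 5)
Your proof is correct. The paper offers no proof of this proposition at all---it is quoted directly from Shabir and Naz \cite{shabir}---so there is nothing to compare against; your argument (establishing monotonicity by observing that $\softcl{(G,\E)}$ is a soft closed set soft containing $(F,\E)$ and hence a member of the family whose soft intersection defines $\softcl{(F,\E)}$, then deducing (2) from the two inclusions and (3) from monotonicity alone) is the standard, complete derivation from Definition \ref{def:softclosure} and Propositions \ref{pro:propertiesofsoftclosedsets} and \ref{pro:propertiesofsoftclosure}.
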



\begin{definition}{\rm\cite{xie}}
\label{def:softadherentpoint}
Let $(X,\Tau,\E)$ be a soft topological space, $(F,\E) \in \SSE[X]$ and $(x_\alpha,\E) \in \SPE[X]$
be a soft set and a soft point over the common universe $X$
with respect to the sets of parameters $\E$, respectively.
We say that $(x_\alpha,\E)$ is a \df{soft adherent point} (sometimes also called \df{soft closure point})
of $(F,\E)$ if it soft meets every soft neighbourhood of the soft point, that is if
for every $(N,\E) \in \Ncal_{(x_\alpha, \E)}$, $(F,\E) \, \softcap \, (N,\E) \, \softnotequal \, \nullsoftset$.
\end{definition}

As in the classical topological space, it is possible to prove that
the soft closure coincides with the set of all its soft adherent points.

\begin{proposition}{\rm\cite{xie}}
\label{pro:softclosureisthesetofsoftadherentpoints}
Let $(X,\Tau,\E)$ be a soft topological space, $(F,\E) \in \SSE[X]$ and $(x_\alpha,\E) \in \SPE[X]$
be a soft set and a soft point over the common universe $X$
with respect to the sets of parameters $\E$, respectively.
Then $(x_\alpha,\E) \softin \softcl{(F,\E)}$ if and only if
$(x_\alpha,\E)$ is a soft adherent point of $(F,\E)$.
\end{proposition}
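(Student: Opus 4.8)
The plan is to reproduce, in the soft setting, the classical equivalence between membership in the closure and meeting every neighbourhood, proving both implications by contraposition so as to exploit the fact that complements of soft closed sets are soft open. Before starting either direction I would record the auxiliary observation that the soft closure $\softcl{(F,\E)}$ is itself a soft closed set: this is immediate from its Definition \ref{def:softclosure} as a soft intersection of soft closed sets together with Proposition \ref{pro:propertiesofsoftclosedsets}(4), or equivalently from the idempotence $\softclpar{\softcl{(F,\E)}}\softequal\softcl{(F,\E)}$ of Proposition \ref{pro:propertiesofsoftclosure}(5) combined with the characterization in Proposition \ref{pro:propertiesofsoftclosure}(4). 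Consequently its soft complement $(\softcl{(F,\E)})^\complement$ is a soft open set, a fact I will use in the converse.

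For the implication that $(x_\alpha,\E)\softin\softcl{(F,\E)}$ forces $(x_\alpha,\E)$ to be a soft adherent point, I would argue contrapositively. Assuming $(x_\alpha,\E)$ is not a soft adherent point of $(F,\E)$, Definition \ref{def:softadherentpoint} yields a soft neighbourhood $(N,\E)\in\Ncal_{(x_\alpha,\E)}$ with $(F,\E)\softcap(N,\E)\softequal\nullsoftset$, and the definition of soft neighbourhood (Definition \ref{pro:softneighbourhoodofasoftpoint}) produces a soft open set $(A,\E)\in\Tau$ with $(x_\alpha,\E)\softin(A,\E)\softsubseteq(N,\E)$. Then $(F,\E)\softcap(A,\E)\softequal\nullsoftset$, which at the level of approximations reads $F(e)\subseteq\U\setminus A(e)$ for every $e\in\E$, i.e. $(F,\E)\softsubseteq(A,\E)^\complement$. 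Since $(A,\E)^\complement$ is soft closed (Definition \ref{def:softclosedset}) and soft contains $(F,\E)$, it is one of the soft sets intersected in Definition \ref{def:softclosure}, so $\softcl{(F,\E)}\softsubseteq(A,\E)^\complement$. But $(x_\alpha,\E)\softin(A,\E)$ gives $x\in A(\alpha)$, hence $x\notin\U\setminus A(\alpha)$, so $(x_\alpha,\E)\softnotin(A,\E)^\complement$ and therefore $(x_\alpha,\E)\softnotin\softcl{(F,\E)}$.

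For the converse, again by contraposition, I assume $(x_\alpha,\E)\softnotin\softcl{(F,\E)}$ and set $(A,\E):=(\softcl{(F,\E)})^\complement$, which by the auxiliary observation is soft open and soft contains $(x_\alpha,\E)$; taking this open set as its own witness shows $(A,\E)\in\Ncal_{(x_\alpha,\E)}$. Using $(F,\E)\softsubseteq\softcl{(F,\E)}$ from Proposition \ref{pro:propertiesofsoftclosure}(3), I obtain $(F,\E)\softcap(A,\E)\softsubseteq\softcl{(F,\E)}\softcap(\softcl{(F,\E)})^\complement\softequal\nullsoftset$, so this particular soft neighbourhood is not soft met and $(x_\alpha,\E)$ fails to be a soft adherent point.

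The only real point requiring care — more bookkeeping than difficulty — is the faithful translation between the soft-set relations (soft disjointness, soft inclusion, soft belonging) and the pointwise statements about the approximations $F(e)$ and $A(e)$: specifically, that $(F,\E)\softcap(A,\E)\softequal\nullsoftset$ is equivalent to $(F,\E)\softsubseteq(A,\E)^\complement$, and that no soft point can soft belong simultaneously to $(A,\E)$ and to $(A,\E)^\complement$. Once these routine transcriptions are in place, the argument is a direct soft analogue of the classical closure–adherence equivalence, so I expect no genuine obstacle.
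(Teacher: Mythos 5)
Your argument is correct: both contrapositive directions are sound, the auxiliary observation that $\softcl{(F,\E)}$ is soft closed is properly justified via Definition \ref{def:softclosure} and Proposition \ref{pro:propertiesofsoftclosedsets}(4), and the pointwise translations (soft disjointness of $(F,\E)$ and $(A,\E)$ versus $(F,\E)\softsubseteq(A,\E)^\complement$, and the impossibility of a soft point soft belonging to both $(A,\E)$ and $(A,\E)^\complement$) all check out against Definitions \ref{def:softpointsoftbelongstosoftset}, \ref{def:softcomplement} and \ref{def:softintersection}. The paper itself states this proposition without proof, citing Xie, so there is no in-paper argument to compare against; yours is the standard closure--adherence equivalence transported to the soft setting and would serve as a complete proof.
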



Having in mind the Definition \ref{def:subsoftset} we can recall the following proposition.

\begin{proposition}{\rm\cite{hussain}}
\label{pro:softrelativetopology}
Let $(X,\Tau,\E)$ be a soft topological space over $X$, and $Y$ be a nonempty subset of $X$, then
the family $\Tau_Y$ of all sub soft sets of $\Tau$ over $Y$, i.e.
$$\Tau_Y = \left\{ \left(\rl F,\E \right) : \, (F,\E) \in \Tau \right\}$$
is a soft topology on $Y$.
\end{proposition}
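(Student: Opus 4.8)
The plan is to verify directly the four conditions of Definition~\ref{def:softtopology} for the family $\Tau_Y$, now regarded as a family of soft sets over the universe $Y$. The guiding observation is Remark~\ref{rem:subsoftset}: every member $\left(\rl F,\E\right)$ of $\Tau_Y$ can be rewritten as the soft intersection $(F,\E)\softcap(\tilde{Y},\E)$. This reduces each condition to an elementary manipulation of soft intersections, combined with the fact that $\Tau$ is already a soft topology on $X$. Equivalently, all the computations below can be carried out parameterwise, since $\rl F(e)=F(e)\cap Y$ for every $e\in\E$ by Definition~\ref{def:subsoftset}.

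First I would dispatch conditions (i) and (ii). For the null soft set, $\nullsoftset\in\Tau$ and its sub soft set over $Y$ sends every $e\in\E$ to $\emptyset\cap Y=\emptyset$, so it is again $\nullsoftsetG{\E}$, which therefore lies in $\Tau_Y$. For the absolute soft set over $Y$, I would start from $\absolutesoftset[X]\in\Tau$ and note that its sub soft set over $Y$ sends every $e\in\E$ to $X\cap Y=Y$, whence $\left(\rl\tilde{X},\E\right)\softequal\absolutesoftsetG{Y}{\E}$ belongs to $\Tau_Y$.

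For the finite-intersection condition (iii), given $\left(\rl F_1,\E\right),\left(\rl F_2,\E\right)\in\Tau_Y$ with $(F_1,\E),(F_2,\E)\in\Tau$, I would use Remark~\ref{rem:subsoftset} together with Definition~\ref{def:softintersection} and the idempotency of soft intersection (Proposition~\ref{pro:propertiesunionandintersection}) to obtain
\[
\left(\rl F_1,\E\right)\softcap\left(\rl F_2,\E\right)\softequal\bigl((F_1,\E)\softcap(F_2,\E)\bigr)\softcap(\tilde{Y},\E).
\]
Since $(F_1,\E)\softcap(F_2,\E)\in\Tau$ by condition (iii) for $\Tau$, the right-hand side is exactly the sub soft set over $Y$ of a member of $\Tau$, hence it belongs to $\Tau_Y$. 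For the arbitrary-union condition (iv), given $\left\{\left(\rl F_i,\E\right)\right\}_{i\in I}\subseteq\Tau_Y$ with each $(F_i,\E)\in\Tau$, I would invoke the distributivity of soft intersection over arbitrary soft union (immediate from Definitions~\ref{def:generalizedsoftunion} and~\ref{def:softintersection}) to get
\[
\softbigcup_{i\in I}\left(\rl F_i,\E\right)\softequal\Bigl(\softbigcup_{i\in I}(F_i,\E)\Bigr)\softcap(\tilde{Y},\E),
\]
and then conclude by condition (iv) for $\Tau$ that this is the sub soft set over $Y$ of the member $\softbigcup_{i\in I}(F_i,\E)\in\Tau$, so it lies in $\Tau_Y$.

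The argument is routine and presents no genuine obstacle; the only point deserving care is the bookkeeping that identifies each resulting soft set as the sub soft set over $Y$ of the appropriate member of $\Tau$, rather than merely as some soft set contained in $Y$. The distributive identity used in (iv) is not isolated as a separate proposition in the preliminaries, but it follows at once by evaluating both sides at an arbitrary $e\in\E$ and applying the ordinary distributivity of intersection over union of subsets of $Y$.
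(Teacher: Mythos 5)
Your proof is correct, but note that the paper itself offers no proof of this proposition: it is recalled from Hussain and Ahmad \cite{hussain} and stated without argument, so there is no in-paper proof to compare yours against. Your direct verification of the four axioms of Definition \ref{def:softtopology} is the standard one and is complete: the parameterwise identities $\emptyset\cap Y=\emptyset$, $X\cap Y=Y$, $(F_1(e)\cap Y)\cap(F_2(e)\cap Y)=(F_1(e)\cap F_2(e))\cap Y$ and $\bigcup_{i\in I}\left(F_i(e)\cap Y\right)=\bigl(\bigcup_{i\in I}F_i(e)\bigr)\cap Y$ do exactly the work you describe, and you correctly flag the only point requiring care, namely exhibiting each resulting soft set as $\left(\rl G,\E\right)$ for some $(G,\E)\in\Tau$ rather than merely as some soft set over $Y$. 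The distributivity step in condition (iv), which you rightly observe is not isolated as a separate proposition in the preliminaries, is indeed immediate from Definitions \ref{def:generalizedsoftunion} and \ref{def:softintersection} together with Remark \ref{rem:softequal}, so the argument stands as written.
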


\begin{definition}{\rm\cite{hussain}}
\label{def:softrelativetopology}
Let $(X,\Tau,\E)$ be a soft topological space over $X$, and let $Y$ be a nonempty subset of $X$,
the soft topology $\Tau_Y = \left\{ \left(\rl F,\E \right) : \, (F,\E) \in \Tau \right\}$
is said to be the \df{soft relative topology} of $\Tau$ on $Y$
and $(Y,\Tau_Y,\E)$ is called the \df{soft topological subspace} of $(X,\Tau,\E)$ on $Y$.
\end{definition}

\begin{proposition}
\label{pro:softclosedsetsinasoftsubspace}
Let $(X,\Tau,\E)$ be a soft topological space over $X$, and
$(Y,\Tau_Y,\E)$ be its soft topological subspace over the subset $Y\subseteq X$,
then a soft set $(D,\E) \in \SSE[Y]$ is a soft closed set respect to the soft subspace $(Y,\Tau_Y,\E)$
if and only if it is a sub soft set of some soft closed set of the soft space $(X,\Tau,\E)$,
i.e.
$$(D,\E) \in \sigma(Y,\E) \quad \Longleftrightarrow \quad
\exists (C,\E) \in \sigma(X,\E) : \, (D,\E) \,\, \softequal \, \left(\rl C,\E \right) .$$
\end{proposition}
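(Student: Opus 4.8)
The plan is to translate soft closedness in the subspace into the language of the soft open sets of $\Tau_Y$ and then transport everything back to $X$ via the soft complement, exactly as in the classical case. The one point demanding genuine care is that, since the subspace $(Y,\Tau_Y,\E)$ has $Y$ (not $X$) as its universe, the soft complement appearing in Definition~\ref{def:softclosedset} when applied inside the subspace must be computed relative to $Y$: the complement of $(D,\E)$ to be used is the soft set whose approximations are $Y\setminus D(e)$, and not $X\setminus D(e)$.

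First I would unwind the hypothesis $(D,\E)\in\sigma(Y,\E)$. By Definition~\ref{def:softclosedset} applied inside the subspace this means that the $Y$-relative complement of $(D,\E)$ is soft open in $\Tau_Y$, and then by Definition~\ref{def:softrelativetopology} (see also Proposition~\ref{pro:softrelativetopology}) there is a soft open set $(F,\E)\in\Tau$ with $Y\setminus D(e)=(\rl F)(e)=F(e)\cap Y$ for every $e\in\E$. I would set $(C,\E)=(F,\E)^\complement$, which by Definition~\ref{def:softclosedset} is a soft closed set of $(X,\Tau,\E)$, i.e.\ $(C,\E)\in\sigma(X,\E)$. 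The heart of the argument is then the pointwise set-theoretic identity
\[
D(e)=Y\setminus\bigl(F(e)\cap Y\bigr)=Y\cap\bigl(X\setminus F(e)\bigr)=C(e)\cap Y=(\rl C)(e),
\]
valid for every $e\in\E$ because $D(e)\subseteq Y$, which yields $(D,\E)\softequal(\rl C,\E)$ by Remark~\ref{rem:softequal}.

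For the converse I would simply reverse these steps: starting from $(C,\E)\in\sigma(X,\E)$ with $(D,\E)\softequal(\rl C,\E)$, I put $(F,\E)=(C,\E)^\complement\in\Tau$ and compute, again using $D(e)=C(e)\cap Y$ together with the same complement identity, that the $Y$-relative complement of $(D,\E)$ has approximations $Y\setminus D(e)=F(e)\cap Y=(\rl F)(e)$. Hence this complement soft equals $(\rl F,\E)\in\Tau_Y$, and so $(D,\E)$ is soft closed in the subspace by Definition~\ref{def:softclosedset}.

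I expect the main obstacle to be bookkeeping rather than anything conceptual: one must consistently track whether a complement is taken in $X$ or in $Y$, since the notation $(\,\cdot\,)^\complement$ of Definition~\ref{def:softcomplement} suppresses the ambient universe. Once this is made explicit, the whole proof reduces to the elementary identity $Y\setminus(S\cap Y)=Y\cap(X\setminus S)$ for $S\subseteq X$, applied approximation by approximation, combined with the description of $\Tau_Y$ as the family of sub soft sets of the soft open sets of $\Tau$.
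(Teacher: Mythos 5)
Your argument is correct and follows exactly the route the paper indicates: the paper's proof is only a one-line citation of Definitions \ref{def:softclosedset} and \ref{def:softrelativetopology}, Remark \ref{rem:subsoftset} and Proposition \ref{pro:softdifferenceandsoftcomplement}, and your write-up is precisely the detailed unwinding of those ingredients, including the (genuinely necessary) care about taking the complement relative to $Y$ rather than $X$. Nothing is missing; the identity $Y\setminus(S\cap Y)=Y\cap(X\setminus S)$ applied approximation by approximation is all that is needed.
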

\begin{proof}
It easily follows from Definitions \ref{def:softclosedset} and \ref{def:softrelativetopology},
Remark \ref{rem:subsoftset},
and Proposition \ref{pro:softdifferenceandsoftcomplement}.
\end{proof}


\begin{proposition}{\rm\cite{nordo2019sub}}
\label{pro:softclosureinasoftrelativetopology}
Let $(X,\Tau,\E)$ be a soft topological space over $X$,
$(Y,\Tau_Y,\E)$ be its soft topological subspace on the subset $Y\subseteq X$, and
$(G,\E) \in \SSE[Y]$ be a soft set over $Y$ respect to the set of parameter $\E$.
Then the soft closure of $(G,\E)$ respect to the soft subspace $(Y,\Tau_Y,\E)$
coincides with the soft intersection of its soft closure respect to the soft space $(X,\Tau,\E)$
and of the absolute soft set $\absolutesoftset[Y]$ of the subspace, that is
$$\softcl[Y]{(G,\E)} \, \softequal \, \softcl{(G,\E)} \, \softcap \, \absolutesoftset[Y] . $$
\end{proposition}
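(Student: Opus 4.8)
The plan is to compute the subspace soft closure directly from its definition as the soft intersection of all subspace soft closed sets soft containing $(G,\E)$, and then to use the characterization of soft closed sets in a soft subspace together with a distributivity argument to factor out the absolute soft set $\absolutesoftset[Y]$.

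First I would write, by Definition \ref{def:softclosure} applied inside the soft subspace $(Y,\Tau_Y,\E)$,
$$\softcl[Y]{(G,\E)} \, \softequal \, \softbigcap \left\{ (D,\E) \in \sigma(Y,\E) : \, (G,\E) \softsubseteq (D,\E) \right\}.$$
Then, by Proposition \ref{pro:softclosedsetsinasoftsubspace} together with Remark \ref{rem:subsoftset}, every soft closed set $(D,\E)$ of the subspace can be written as $(D,\E) \softequal (C,\E) \softcap \absolutesoftset[Y]$ for some $(C,\E) \in \sigma(X,\E)$. The key observation is that, since $(G,\E)$ is a soft set over $Y$, one has $(G,\E) \softsubseteq \absolutesoftset[Y]$, so the condition $(G,\E) \softsubseteq (C,\E) \softcap \absolutesoftset[Y]$ is equivalent to $(G,\E) \softsubseteq (C,\E)$ alone. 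This lets me identify the indexing family of subspace soft closed sets soft containing $(G,\E)$ with the family $\left\{ (C,\E) \softcap \absolutesoftset[Y] : (C,\E) \in \sigma(X,\E), \; (G,\E) \softsubseteq (C,\E) \right\}$, i.e. with the restrictions to $Y$ of exactly those soft closed sets of $X$ that soft contain $(G,\E)$; the two inclusions verifying this equality of families both reduce, via $(C,\E) \softcap \absolutesoftset[Y] \softsubseteq (C,\E)$, to the remark just made.

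Finally I would factor the absolute soft set out of the soft intersection. Because the absolute soft set $\absolutesoftset[X]$ is itself a soft closed set of $X$ soft containing $(G,\E)$ by Proposition \ref{pro:propertiesofsoftclosedsets}, this family is nonempty, and the pointwise identity $\bigcap_{i}(C_i(e) \cap Y) = \left(\bigcap_i C_i(e)\right) \cap Y$ yields the distributive law
$$\softbigcap_{i} \left( (C_i,\E) \softcap \absolutesoftset[Y] \right) \softequal \left( \softbigcap_{i}(C_i,\E) \right) \softcap \absolutesoftset[Y].$$
Applying this to the family above and recognizing that $\softbigcap \left\{ (C,\E) \in \sigma(X,\E) : (G,\E) \softsubseteq (C,\E) \right\} \softequal \softcl{(G,\E)}$ by Definition \ref{def:softclosure} gives $\softcl[Y]{(G,\E)} \softequal \softcl{(G,\E)} \softcap \absolutesoftset[Y]$, as required.

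The main obstacle will be the bookkeeping around the indexing families: ensuring that restricting to $Y$ the soft closed sets of $X$ that soft contain $(G,\E)$ produces exactly the soft closed sets of $Y$ that soft contain $(G,\E)$, neither more nor fewer, and checking that this family is nonempty so that the distributive factoring of $\absolutesoftset[Y]$ out of the soft intersection is legitimate. Everything else reduces to the pointwise set-theoretic identities already encoded in Remark \ref{rem:subsoftset} and the distributive behaviour of soft intersection over a fixed soft set.
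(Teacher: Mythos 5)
The paper does not actually prove this proposition: it is stated with a citation to the external preprint \cite{nordo2019sub}, so there is no internal proof to compare against. Judged on its own, your argument is correct and complete: the identification of the family $\left\{ (D,\E) \in \sigma(Y,\E) : (G,\E) \softsubseteq (D,\E) \right\}$ with $\left\{ (C,\E) \softcap \absolutesoftset[Y] : (C,\E) \in \sigma(X,\E),\ (G,\E) \softsubseteq (C,\E) \right\}$ is exactly where the content lies, and you correctly isolate the two facts that make it work — that $(G,\E) \softsubseteq \absolutesoftset[Y]$ because $(G,\E)$ lives over $Y$, and that Proposition \ref{pro:softclosedsetsinasoftsubspace} gives both directions of the correspondence. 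The nonemptiness check via $\absolutesoftset[X] \in \sigma(X,\E)$ and the parameterwise distributivity $\bigcap_{i}\left(C_i(e) \cap Y\right) = \left(\bigcap_i C_i(e)\right) \cap Y$ are the right finishing touches; this is the standard argument, mirroring the classical crisp-topology proof parameter by parameter, and I see no gap.
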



\begin{definition}{\rm\cite{zorlutuna}}
\label{def:softcontinuousmapping}
Let $\varphi_\psi : \SSG{X}{\E} \to \SSG{X'}{\E'}$ be a soft mapping
between two soft topological spaces $(X,\Tau,\E)$ and $(X',\Tau',\E')$
induced by the mappings $\varphi: X \to X'$ and $\psi: \E \to \E'$
and $(x_\alpha, \E) \in \SPE[X]$ be a soft point over $X$.
We say that the soft mapping $\varphi_\psi$ is \df{soft continuous at the soft point $(x_\alpha, \E)$}
if for each soft neighbourhood $(G,\E')$
of $\varphi_\psi (x_\alpha, \E) $ in $(X',\Tau',\E')$
there exists some soft neighbourhood $(F,\E)$ of $(x_\alpha, \E)$ in $(X,\Tau,\E)$
such that $\varphi_\psi (F,\E) \, \softsubseteq \, (G,\E')$.
\\
If $\varphi_\psi$ is soft continuous at every soft point $(x_\alpha, \E) \in \SPE[X]$,
then $\varphi_\psi : \SSG{X}{\E} \to \SSG{X'}{\E'}$  is called \df{soft continuous} on $X$.
\end{definition}

\begin{proposition}{\rm\cite{zorlutuna}}
\label{pro:characterizationofsoftcontinuitybysoftopensets}
Let $\varphi_\psi : \SSG{X}{\E} \to \SSG{X'}{\E'}$ be a soft mapping
between two soft topological spaces $(X,\Tau,\E)$ and $(X',\Tau',\E')$
induced by the mappings $\varphi: X \to X'$ and $\psi: \E \to \E'$.
Then the soft mapping $\varphi_\psi$ is soft continuous if and only if
every soft inverse image of a soft open set in $X'$ is a soft open set in $X$,
that is, if for each $(G,\E') \in \Tau'$
we have that $\varphi_\psi^{-1} (G,\E') \in \Tau$.
\end{proposition}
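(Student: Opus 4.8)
The plan is to establish the two implications separately. In each direction I would move freely between a soft neighbourhood and a soft open set soft contained in it, as permitted by Definition \ref{pro:softneighbourhoodofasoftpoint}, and exploit that $\varphi_\psi$ sends soft points to soft points (Proposition \ref{pro:softimageofasoftpoint}) together with the inclusion-preserving containment properties of soft images and soft inverse images collected in Proposition \ref{pro:propertiesofsoftmappings}. The whole argument is the soft transcription of the classical equivalence ``continuous $\Leftrightarrow$ preimages of open sets are open''.

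For the sufficiency, assume $\varphi_\psi^{-1}(G,\E') \in \Tau$ for every $(G,\E') \in \Tau'$ and fix an arbitrary soft point $(x_\alpha,\E) \in \SPE[X]$. Given any soft neighbourhood $(G,\E')$ of $\varphi_\psi(x_\alpha,\E)$, I would choose, by Definition \ref{pro:softneighbourhoodofasoftpoint}, a soft open set $(A',\E') \in \Tau'$ with $\varphi_\psi(x_\alpha,\E) \softin (A',\E') \softsubseteq (G,\E')$, and set $(F,\E) := \varphi_\psi^{-1}(A',\E')$, which is soft open by hypothesis. A direct computation with Definition \ref{def:softinverseimage} and Proposition \ref{pro:softimageofasoftpoint} shows that $\varphi_\psi(x_\alpha,\E) \softin (A',\E')$ is equivalent to $(x_\alpha,\E) \softin (F,\E)$, since both reduce to $\varphi(x) \in A'(\psi(\alpha))$; hence $(F,\E)$ is a soft neighbourhood of $(x_\alpha,\E)$. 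Moreover, property (5) of Proposition \ref{pro:propertiesofsoftmappings} gives $\varphi_\psi(F,\E) = \varphi_\psi(\varphi_\psi^{-1}(A',\E')) \softsubseteq (A',\E') \softsubseteq (G,\E')$. This is exactly soft continuity of $\varphi_\psi$ at $(x_\alpha,\E)$ (Definition \ref{def:softcontinuousmapping}), and since the soft point was arbitrary, $\varphi_\psi$ is soft continuous.

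For the necessity, assume $\varphi_\psi$ soft continuous, fix $(G,\E') \in \Tau'$, and put $(F,\E) := \varphi_\psi^{-1}(G,\E')$; the goal is $(F,\E) \in \Tau$. For each soft point $(x_\alpha,\E) \softin (F,\E)$, Definition \ref{def:softinverseimage} and Proposition \ref{pro:softimageofasoftpoint} give $\varphi_\psi(x_\alpha,\E) \softin (G,\E')$, so $(G,\E')$ --- being soft open --- is itself a soft neighbourhood of $\varphi_\psi(x_\alpha,\E)$. Soft continuity at $(x_\alpha,\E)$ then yields a soft neighbourhood $(N,\E)$ of $(x_\alpha,\E)$ with $\varphi_\psi(N,\E) \softsubseteq (G,\E')$, inside which Definition \ref{pro:softneighbourhoodofasoftpoint} provides a soft open set $(A_{x_\alpha},\E) \in \Tau$ with $(x_\alpha,\E) \softin (A_{x_\alpha},\E) \softsubseteq (N,\E)$; monotonicity of soft images (property (7) of Proposition \ref{pro:propertiesofsoftmappings}) then gives $\varphi_\psi(A_{x_\alpha},\E) \softsubseteq (G,\E')$. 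Applying $\varphi_\psi^{-1}$ and using properties (8) and (4) of Proposition \ref{pro:propertiesofsoftmappings} promotes this to $(A_{x_\alpha},\E) \softsubseteq \varphi_\psi^{-1}(\varphi_\psi(A_{x_\alpha},\E)) \softsubseteq \varphi_\psi^{-1}(G,\E') = (F,\E)$.

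Finally I would assemble these local soft open sets globally: the soft union $\softbigcup (A_{x_\alpha},\E)$ taken over all soft points $(x_\alpha,\E) \softin (F,\E)$ lies in $\Tau$ by axiom (iv) of Definition \ref{def:softtopology}, and it soft equals $(F,\E)$ --- one inclusion because each $(A_{x_\alpha},\E) \softsubseteq (F,\E)$, the other because every soft point of $(F,\E)$ soft belongs to its own summand, invoking the soft-point characterization of soft inclusion in Proposition \ref{pro:softinclusionbysoftpoint}. The degenerate case in which $(F,\E)$ has no soft points is handled directly, since then $(F,\E) \softequal \nullsoftset \in \Tau$. I expect this necessity direction to be the main obstacle: passing from a purely pointwise continuity hypothesis to global soft openness forces the recombination of the local data via a soft union, and the chain $(A_{x_\alpha},\E) \softsubseteq \varphi_\psi^{-1}(\varphi_\psi(A_{x_\alpha},\E)) \softsubseteq (F,\E)$ is its crux, being the one place where the non-trivial containment property (4) of soft images is genuinely needed.
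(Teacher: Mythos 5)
Your proof is correct. Note that the paper itself offers no proof of this proposition --- it is quoted from Zorlutuna et al.\ \cite{zorlutuna} as a known result --- so there is nothing to compare against; but your argument is the standard and sound one: the key equivalence $(x_\alpha,\E) \softin \varphi_\psi^{-1}(A',\E') \Leftrightarrow \varphi_\psi(x_\alpha,\E) \softin (A',\E')$ (both reducing to $\varphi(x)\in A'(\psi(\alpha))$ via Definition \ref{def:softinverseimage} and Proposition \ref{pro:softimageofasoftpoint}) is verified correctly, the containments from Proposition \ref{pro:propertiesofsoftmappings}(4),(5),(7),(8) are applied where they are actually needed, and the final reassembly of $(F,\E)$ as a soft union of the local soft open sets via Proposition \ref{pro:softinclusionbysoftpoint} and axiom (iv) of Definition \ref{def:softtopology}, including the degenerate null case, closes the necessity direction properly.
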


\begin{proposition}{\rm\cite{zorlutuna}}
\label{pro:characterizationofsoftcontinuitybysoftclosedsets}
Let $\varphi_\psi : \SSG{X}{\E} \to \SSG{X'}{\E'}$ be a soft mapping
between two soft topological spaces $(X,\Tau,\E)$ and $(X',\Tau',\E')$
induced by the mappings $\varphi: X \to X'$ and $\psi: \E \to \E'$.
Then the soft mapping $\varphi_\psi$ is soft continuous if and only if
every soft inverse image of a soft closed set in $X'$ is a soft closed set in $X$,
that is, if for each $(C,\E') \in \sigma(X',\E')$
we have that $\varphi_\psi^{-1} (C,\E') \in \sigma(X,\E)$.
\end{proposition}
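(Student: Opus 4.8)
The plan is to deduce this characterization from the already-established open-set version, Proposition \ref{pro:characterizationofsoftcontinuitybysoftopensets}, by exploiting the duality between soft open and soft closed sets furnished by soft complementation (Definition \ref{def:softclosedset}). The single technical ingredient that makes the translation work is the fact that soft inverse images commute with soft complements, namely Proposition \ref{pro:propertiesofsoftmappings}(6), which asserts $\varphi_\psi^{-1}\!\left( (G,\E')^\complement \right) \softequal \left( \varphi_\psi^{-1}(G,\E') \right)^{\!\complement}$ for every soft set $(G,\E')$ over $X'$. Since the statement is an \emph{if and only if}, I would argue both implications separately, though they are perfectly symmetric.

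For the forward direction, I would assume $\varphi_\psi$ soft continuous and take an arbitrary soft closed set $(C,\E') \in \sigma(X',\E')$. By Definition \ref{def:softclosedset} its soft complement $(C,\E')^\complement$ is soft open in $X'$, so Proposition \ref{pro:characterizationofsoftcontinuitybysoftopensets} gives $\varphi_\psi^{-1}\!\left( (C,\E')^\complement \right) \in \Tau$. Rewriting this soft inverse image via Proposition \ref{pro:propertiesofsoftmappings}(6) yields $\left( \varphi_\psi^{-1}(C,\E') \right)^{\!\complement} \in \Tau$, which by Definition \ref{def:softclosedset} is precisely the assertion that $\varphi_\psi^{-1}(C,\E') \in \sigma(X,\E)$, as required.

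For the converse, I would assume that every soft inverse image of a soft closed set is soft closed and take an arbitrary soft open set $(G,\E') \in \Tau'$. Then $(G,\E')^\complement \in \sigma(X',\E')$, so by hypothesis $\varphi_\psi^{-1}\!\left( (G,\E')^\complement \right) \in \sigma(X,\E)$; applying Proposition \ref{pro:propertiesofsoftmappings}(6) once more this reads $\left( \varphi_\psi^{-1}(G,\E') \right)^{\!\complement} \in \sigma(X,\E)$, whence $\varphi_\psi^{-1}(G,\E') \in \Tau$ by Definition \ref{def:softclosedset}. Since $(G,\E')$ was an arbitrary soft open set, Proposition \ref{pro:characterizationofsoftcontinuitybysoftopensets} lets me conclude that $\varphi_\psi$ is soft continuous.

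I do not anticipate any genuine obstacle here: the whole argument is a routine complementation duality, and the only point demanding care is ensuring that the commutation of soft inverse image with soft complement is invoked correctly in both directions. In particular, no appeal to surjectivity or injectivity of $\varphi_\psi$ is needed, because Proposition \ref{pro:propertiesofsoftmappings}(6) holds unconditionally for soft inverse images (unlike the corresponding identity for soft direct images).
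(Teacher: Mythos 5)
The paper states this proposition as a result quoted from \cite{zorlutuna} and gives no proof of its own, so there is nothing to compare against; your complementation-duality argument, resting on Definition \ref{def:softclosedset}, Proposition \ref{pro:propertiesofsoftmappings}(6) and Proposition \ref{pro:characterizationofsoftcontinuitybysoftopensets}, is correct in both directions and is the standard proof of this equivalence.
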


\begin{definition}{\rm\cite{zorlutuna}}
\label{def:restrictionofsoftmapping}
Let $\varphi_\psi : \SSG{X}{\E} \to \SSG{X'}{\E'}$ be a soft mapping
between two soft topological spaces $(X,\Tau,\E)$ and $(X',\Tau',\E')$
induced by the mappings $\varphi: X \to X'$ and $\psi: \E \to \E'$,
and let $Y$ be a nonempty subset of $X$,
the \df{restriction} of the soft mapping $\varphi_\psi$ to $Y$,
denoted by ${\varphi_\psi}_{|Y}$,
is the soft mapping ${(\varphi_{|Y})}_\psi : \SSG{Y}{\E} \to \SSG{X'}{\E'}$
induced by the restriction $\varphi_{|Y}: Y \to X'$ of the mapping $\varphi$ between the universe sets
and by the same mapping $\psi: \E \to \E'$ between the parameter sets.
\end{definition}

\begin{proposition}{\rm\cite{zorlutuna}}
\label{pro:softcontinuousrestriction}
If $\varphi_\psi : \SSG{X}{\E} \to \SSG{X'}{\E'}$ is a soft continuous mapping
between two soft topological spaces $(X,\Tau,\E)$ and $(X',\Tau',\E')$,
then its restriction ${\varphi_\psi}_{|Y} : \SSG{Y}{\E} \to \SSG{X'}{\E'}$
to a nonempty subset $Y$ of $X$ is soft continuous too.
\end{proposition}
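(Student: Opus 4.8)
The plan is to establish soft continuity of the restriction through the soft open set characterization of Proposition \ref{pro:characterizationofsoftcontinuitybysoftopensets}, so that it suffices to prove that the soft inverse image under ${\varphi_\psi}_{|Y}$ of every soft open set of $(X',\Tau',\E')$ is a soft open set of the soft subspace $(Y,\Tau_Y,\E)$.

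First I would fix an arbitrary $(G,\E') \in \Tau'$ and unravel, by Definitions \ref{def:restrictionofsoftmapping} and \ref{def:softinverseimage}, the soft inverse image of $(G,\E')$ under the restriction ${\varphi_\psi}_{|Y} = (\varphi_{|Y})_\psi$. For every $e \in \E$ it is governed by the set-valued mapping $(\varphi_{|Y})_\psi^{-1}(G)(e) = (\varphi_{|Y})^{-1}\!\left( G(\psi(e)) \right)$, which lives over the subset $Y$.

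The crux of the argument, and the only genuinely computational point, is the elementary crisp fact that for every $A \subseteq X'$ the preimage under the restriction $\varphi_{|Y}: Y \to X'$ satisfies $(\varphi_{|Y})^{-1}(A) = \varphi^{-1}(A) \cap Y$. Taking $A = G(\psi(e))$, this gives $(\varphi_{|Y})_\psi^{-1}(G)(e) = \varphi^{-1}\!\left( G(\psi(e)) \right) \cap Y = \varphi_\psi^{-1}(G)(e) \cap Y$ for every $e \in \E$, so that, by Definition \ref{def:subsoftset}, the soft inverse image under the restriction is precisely the sub soft set over $Y$ of the soft inverse image under $\varphi_\psi$, namely $(\varphi_{|Y})_\psi^{-1}(G,\E') \softequal \left( \rl \varphi_\psi^{-1}(G), \E \right)$.

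Finally, since $\varphi_\psi$ is soft continuous, Proposition \ref{pro:characterizationofsoftcontinuitybysoftopensets} yields $\varphi_\psi^{-1}(G,\E') \in \Tau$, whence its sub soft set over $Y$ belongs to $\Tau_Y$ by the very Definition \ref{def:softrelativetopology} of the soft relative topology. Thus $(\varphi_{|Y})_\psi^{-1}(G,\E') \in \Tau_Y$ for every $(G,\E') \in \Tau'$, and a further appeal to Proposition \ref{pro:characterizationofsoftcontinuitybysoftopensets} shows that ${\varphi_\psi}_{|Y}$ is soft continuous. I anticipate no real obstacle beyond this bookkeeping: everything reduces to the commutation of the crisp preimage with intersection by $Y$, the rest being direct invocations of the cited characterizations and definitions.
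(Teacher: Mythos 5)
Your proof is correct. The paper itself states this proposition without proof, quoting it from the literature (Zorlutuna et al.), so there is no in-paper argument to compare against; your route --- reducing soft continuity of the restriction to the soft open set characterization of Proposition \ref{pro:characterizationofsoftcontinuitybysoftopensets} via the crisp identity $(\varphi_{|Y})^{-1}(A)=\varphi^{-1}(A)\cap Y$, which exhibits the soft inverse image under the restriction as the sub soft set over $Y$ of the soft inverse image under $\varphi_\psi$ and hence as a member of $\Tau_Y$ --- is the standard argument and is complete.
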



\begin{proposition}
\label{pro:softcontinuouscorestriction}
If $\varphi_\psi : \SSG{X}{\E} \to \SSG{X'}{\E'}$ is a soft continuous mapping
between two soft topological spaces $(X,\Tau,\E)$ and $(X',\Tau',\E')$,
then its corestriction $\varphi_\psi : \SSG{X}{\E} \to \varphi_\psi \left( \SSG{X}{\E} \right)$
is soft continuous too.
\end{proposition}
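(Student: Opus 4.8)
The plan is to verify soft continuity of the corestriction through the characterization by soft open sets given in Proposition \ref{pro:characterizationofsoftcontinuitybysoftopensets}, transcribing the classical corestriction argument into the soft setting. First I would make the codomain explicit: writing $Y' = \varphi(X) \subseteq X'$ for the image of the universe under $\varphi$, every soft image $\varphi_\psi(F)(e') = \bigcup\{\varphi(F(e)) : e \in \psi^{-1}(\{e'\})\}$ is a union of subsets of $Y'$, so $\varphi_\psi\left(\SSG{X}{\E}\right)$ is contained in the set of soft sets over $Y'$ with respect to $\E'$. Hence the natural codomain of the corestriction is the soft topological subspace $(Y',\Tau'_{Y'},\E')$ of $(X',\Tau',\E')$, endowed with the soft relative topology $\Tau'_{Y'}$ of Definition \ref{def:softrelativetopology} (which is indeed a soft topology by Proposition \ref{pro:softrelativetopology}). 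By Definition \ref{def:softmapping} the corestriction is then the soft mapping $\overline{\varphi}_\psi : \SSG{X}{\E} \to \SSG{Y'}{\E'}$ induced by the corestriction $\overline{\varphi} : X \to Y'$ of $\varphi$ and by the same $\psi : \E \to \E'$; note that $\overline{\varphi}_\psi$ acts on soft sets exactly as $\varphi_\psi$ does, only its codomain universe is smaller.

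Next I would take an arbitrary soft open set $(G,\E') \in \Tau'_{Y'}$ of the subspace and show that its soft inverse image under $\overline{\varphi}_\psi$ is soft open in $(X,\Tau,\E)$. By Definition \ref{def:softrelativetopology} together with Remark \ref{rem:subsoftset}, there exists a soft open set $(H,\E') \in \Tau'$ with $(G,\E') \softequal \left(\rl[Y'] H, \E'\right)$, that is $G(e') = H(e') \cap Y'$ for every $e' \in \E'$. Applying the definition of soft inverse image (Definition \ref{def:softinverseimage}) to the corestriction, for every $e \in \E$ one has $\overline{\varphi}_\psi^{-1}(G)(e) = \overline{\varphi}^{-1}\left( G(\psi(e)) \right) = \overline{\varphi}^{-1}\left( H(\psi(e)) \cap Y' \right)$.

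The crux of the argument is the elementary set-theoretic behaviour of the corestriction $\overline{\varphi}$: since $\overline{\varphi}$ takes all its values in $Y'$, one has $\overline{\varphi}^{-1}(A) = \varphi^{-1}(A)$ for every $A \subseteq Y'$, and $\overline{\varphi}^{-1}(Y') = X$. Consequently $\overline{\varphi}^{-1}\left( H(\psi(e)) \cap Y' \right) = \varphi^{-1}\left( H(\psi(e)) \right) \cap X = \varphi^{-1}\left( H(\psi(e)) \right) = \varphi_\psi^{-1}(H)(e)$ for every $e \in \E$, so that $\overline{\varphi}_\psi^{-1}(G,\E') \softequal \varphi_\psi^{-1}(H,\E')$. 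Since $\varphi_\psi$ is soft continuous and $(H,\E') \in \Tau'$, Proposition \ref{pro:characterizationofsoftcontinuitybysoftopensets} yields $\varphi_\psi^{-1}(H,\E') \in \Tau$, whence $\overline{\varphi}_\psi^{-1}(G,\E') \in \Tau$. By the same Proposition \ref{pro:characterizationofsoftcontinuitybysoftopensets} applied now to the corestriction, this proves that $\overline{\varphi}_\psi$ is soft continuous. I expect the only genuine subtlety to lie in correctly identifying the codomain subspace $(Y',\Tau'_{Y'},\E')$ and in justifying the passage $\overline{\varphi}^{-1}(A) = \varphi^{-1}(A)$ for $A \subseteq Y'$ (which collapses the sub soft set back to an ordinary soft inverse image); the remaining steps are a routine bookkeeping of the soft operations, and the whole proof dualizes the restriction result of Proposition \ref{pro:softcontinuousrestriction}.
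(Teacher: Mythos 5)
Your proposal is correct and follows the same route the paper intends: its one-line proof cites exactly Definitions \ref{def:softmapping} and \ref{def:softinverseimage} together with Proposition \ref{pro:characterizationofsoftcontinuitybysoftopensets}, and you have simply supplied the details — identifying the codomain with the soft subspace on $Y'=\varphi(X)$, pulling a relatively soft open set back to $\varphi_\psi^{-1}(H,\E')$ via $\overline{\varphi}^{-1}(A)=\varphi^{-1}(A)$ for $A\subseteq Y'$, and invoking the open-set characterization of soft continuity. No gaps; the write-up is a faithful and more explicit version of the paper's argument.
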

\begin{proof}
It easily follows from Definitions \ref{def:softmapping} and \ref{def:softinverseimage},
and Proposition \ref{pro:characterizationofsoftcontinuitybysoftopensets}.
\end{proof}


\begin{definition}{\rm\cite{aygunoglu}}
\label{def:softsubbase}
Let $(X,\Tau,\E)$ be a soft topological space over $X$ and $\Scal \subseteq \Tau$ be
a non-empty subset of soft open sets.
We say that $\Scal$ is a \df{soft open subbase} for $(X,\Tau,\E)$ if the family of all finite soft intersections
of members of $\Scal$ forms a soft open base for $(X,\Tau,\E)$.
\end{definition}

\begin{proposition}{\rm\cite{aygunoglu}}
\label{pro:softtopologyinducedbyasoftsubbase}
Let $\Scal \subseteq \SSG{X}{\E}$ be a family of soft sets over $X$, containing both
the null soft set $\nullsoftset$ and the absolute soft set $\absolutesoftset[X]$.
Then the family $\Tau(\Scal)$
of all soft union of finite soft intersections of soft sets in $\Scal$
is a soft topology having $\Scal$ as soft open subbase.
\end{proposition}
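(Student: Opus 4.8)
The plan is to set $\Bcal$ equal to the family of all finite soft intersections of members of $\Scal$, so that, by construction, $\Tau(\Scal)$ is precisely the family of all soft unions of subfamilies of $\Bcal$. Since each member of $\Scal$ is the (trivial) finite soft intersection of the one-element subfamily consisting of itself, and each member of $\Bcal$ is the soft union of the one-element subfamily consisting of itself, we immediately obtain the chain $\Scal \subseteq \Bcal \subseteq \Tau(\Scal)$. I would then verify that $\Tau(\Scal)$ satisfies the four conditions of Definition \ref{def:softtopology} and finally check the soft open subbase claim.

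For the easy axioms, since by hypothesis $\nullsoftset, \absolutesoftset[X] \in \Scal \subseteq \Tau(\Scal)$, conditions (i) and (ii) hold at once (this is precisely where the requirement that $\Scal$ contains both the null and the absolute soft set is used). For the closure under arbitrary soft union (iv), I would take a subfamily $\{(F_k,\E)\}_{k \in K} \subseteq \Tau(\Scal)$, write each $(F_k,\E) \softequal \softbigcup_{i \in I_k}(B_i^k,\E)$ with $(B_i^k,\E) \in \Bcal$, and invoke the associativity of the generalized soft union (which, by Definition \ref{def:generalizedsoftunion}, reduces approximation-wise to the associativity of the ordinary union of crisp sets) to rewrite $\softbigcup_{k \in K}(F_k,\E)$ as a single soft union of members of $\Bcal$, hence as an element of $\Tau(\Scal)$.

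The heart of the argument is the closure under finite soft intersection (iii). First I would note that $\Bcal$ is itself closed under finite soft intersection: a finite soft intersection of finite soft intersections of members of $\Scal$ collapses, by the associativity of soft intersection, into a single finite soft intersection of members of $\Scal$. Then, given $(F,\E), (G,\E) \in \Tau(\Scal)$ written as $(F,\E) \softequal \softbigcup_{i \in I}(B_i,\E)$ and $(G,\E) \softequal \softbigcup_{j \in J}(B_j',\E)$ with $(B_i,\E),(B_j',\E) \in \Bcal$, I would use the distributive law
\[
\left( \softbigcup_{i \in I}(B_i,\E) \right) \softcap \left( \softbigcup_{j \in J}(B_j',\E) \right)
\; \softequal \;
\softbigcup_{(i,j) \in I \times J} \left( (B_i,\E) \softcap (B_j',\E) \right)
\]
to express $(F,\E) \softcap (G,\E)$ as a soft union of the soft sets $(B_i,\E)\softcap(B_j',\E)$; each of these lies in $\Bcal$ by the closure just established, so the whole soft union lies in $\Tau(\Scal)$. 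I expect this distributive identity to be the only genuinely technical point, and I would justify it approximation-wise: evaluating both sides at an arbitrary parameter $e \in \E$ reduces, via Definitions \ref{def:generalizedsoftunion} and \ref{def:softintersection}, to the classical distributivity of intersection over union of crisp subsets of $X$.

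Finally, for the subbase claim the work is already done: by construction every member of $\Tau(\Scal)$ is a soft union of a subfamily of $\Bcal$, so $\Bcal$ is a soft open base for $(X,\Tau(\Scal),\E)$ in the sense of Definition \ref{def:softbase}; and since $\Bcal$ is exactly the family of all finite soft intersections of members of $\Scal$, Definition \ref{def:softsubbase} then yields immediately that $\Scal$ is a soft open subbase for $(X,\Tau(\Scal),\E)$.
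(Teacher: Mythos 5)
Your argument is correct. Note that the paper itself offers no proof of this proposition to compare against: it is imported verbatim from Ayg\"{u}no\u{g}lu and Ayg\"{u}n (the citation attached to the statement), so the only available benchmark is the standard generated-topology argument, which is exactly what you give. Your decomposition --- set $\Bcal$ equal to the finite soft intersections of $\Scal$, observe $\Scal \subseteq \Bcal \subseteq \Tau(\Scal)$, dispose of axioms (i) and (ii) via the hypothesis $\nullsoftset, \absolutesoftset[X] \in \Scal$, get (iv) from associativity of the generalized soft union, and get (iii) from closure of $\Bcal$ under finite soft intersection together with the parameter-wise distributivity of soft intersection over soft union --- is sound at every step, and you correctly identify the distributive identity as the one point needing verification, which indeed reduces via Definitions \ref{def:generalizedsoftunion} and \ref{def:softintersection} to the classical distributive law applied approximation by approximation. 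The closing observation that $\Bcal$ being a soft open base makes $\Scal$ a soft open subbase in the sense of Definition \ref{def:softsubbase} completes the claim with nothing left over.
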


\begin{definition}{\rm\cite{aygunoglu}}
\label{def:softtopologyinducedbyasoftsubbase}
Let $\Scal \subseteq \SSG{X}{\E}$ be a a family of soft sets over $X$ respect to a set of parameters $\E$
and such that $\nullsoftset, \absolutesoftset[X] \in \Scal$, then
the soft topology $\Tau(\Scal)$ of the above Proposition \ref{pro:softtopologyinducedbyasoftsubbase}
is called the \df{soft topology generated} by the soft open subbase $\Scal$ over $X$
and $(X,\Tau(\Scal), \E)$ is said to be the \df{soft topological space generated by $\Scal$} over $X$.
\end{definition}

\begin{definition}{\rm\cite{aygunoglu}}
\label{def:initialsofttopology}
Let $\SSG{X}{\E}$ be the set of all the soft sets over a universe set $X$
with respect to a set of parameter $\E$
and consider a family of soft topological spaces $\left\{ (Y_i, \Tau_i, \E_i ) \right\}_{i \in I}$
and a corresponding family $\left\{ (\varphi_\psi)_i \right\}_{i\in I}$
of soft mappings $(\varphi_\psi)_i = (\varphi_i)_{\psi_i} : \SSG{X}{\E} \to \SSG{Y_i}{\E_i}$
induced by the mappings $\varphi_i: X \to Y_i$ and $\psi_i: \E \to \E_i$ (with $i \in I$).
Then the soft topology $\Tau(\Scal)$ generated by the soft open subbase
$\Scal = \left\{ (\varphi_\psi)_i^{-1}(G,\E_i) : \, (G,\E_i) \in \Tau_i , \, i \in I \right\}$
of all soft inverse images of soft open sets of $\Tau_i$
under the soft mappings $(\varphi_\psi)_i$
is called the \df{initial soft topology}
induced on $X$ by the family of soft mappings $\left\{ (\varphi_\psi)_i \right\}_{i\in I}$
and it is denoted by $\Tau_{ini}\!\left(X, \E, Y_i, \E_i, (\varphi_\psi)_i; i\in I \right)$.
\end{definition}

\begin{proposition}{\rm\cite{aygunoglu}}
\label{pro:characterizationofsoftinitialtopology}
The initial soft topology $\Tau_{ini}\!\left(X, \E, Y_i, \E_i, (\varphi_\psi)_i; i\in I \right)$
induced on $X$ by the family of soft mappings $\left\{ (\varphi_\psi)_i \right\}_{i\in I}$
is the coarsest soft topology on $\SSG{X}{\E}$ for which
all the soft mappings $(\varphi_\psi)_i : \SSG{X}{\E} \to \SSG{Y_i}{\E_i}$ (with $i \in I$)
are soft continuous.
\end{proposition}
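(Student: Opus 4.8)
The plan is to establish the two assertions packed into the word \apici{coarsest}: first, that all the soft mappings $(\varphi_\psi)_i$ are soft continuous with respect to $\Tau_{ini}$; and second, that any soft topology on $\SSG{X}{\E}$ enjoying this property contains $\Tau_{ini}$ in the sense of ordinary set inclusion, hence is finer than it by Definition \ref{def:comparisonofsofttopologies}. Before either step I would check that $\Tau_{ini} = \Tau(\Scal)$ is a genuine soft topology by verifying the hypothesis of Proposition \ref{pro:softtopologyinducedbyasoftsubbase}, namely that $\Scal$ contains both $\nullsoftset$ and $\absolutesoftset[X]$. This is immediate from parts (2) and (3) of Proposition \ref{pro:propertiesofsoftmappings}: since $\nullsoftsetG{\E_i} \in \Tau_i$ and $\absolutesoftsetG{Y_i}{\E_i} \in \Tau_i$, their soft inverse images $(\varphi_\psi)_i^{-1}\nullsoftsetG{\E_i} \softequal \nullsoftsetG{\E}$ and $(\varphi_\psi)_i^{-1}\absolutesoftsetG{Y_i}{\E_i} \softequal \absolutesoftsetG{X}{\E}$ already lie in $\Scal$.

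For the continuity step I would invoke Proposition \ref{pro:characterizationofsoftcontinuitybysoftopensets}. Fixing $i \in I$ and an arbitrary soft open set $(G,\E_i) \in \Tau_i$, the soft inverse image $(\varphi_\psi)_i^{-1}(G,\E_i)$ belongs to $\Scal$ by the very definition of the subbase, and therefore to $\Tau(\Scal) = \Tau_{ini}$. Thus every soft inverse image of a soft open set of $Y_i$ is soft open in $X$, which is exactly the criterion of Proposition \ref{pro:characterizationofsoftcontinuitybysoftopensets} for the soft continuity of $(\varphi_\psi)_i$.

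For the minimality step, let $\Tau'$ be any soft topology on $\SSG{X}{\E}$ for which all the $(\varphi_\psi)_i$ are soft continuous. Applying Proposition \ref{pro:characterizationofsoftcontinuitybysoftopensets} in the other direction, for each $i \in I$ and each $(G,\E_i) \in \Tau_i$ the soft set $(\varphi_\psi)_i^{-1}(G,\E_i)$ is soft open in $\Tau'$; consequently $\Scal \subseteq \Tau'$. Now every member of $\Tau_{ini} = \Tau(\Scal)$ is, by Proposition \ref{pro:softtopologyinducedbyasoftsubbase}, a soft union of finite soft intersections of members of $\Scal$. Since $\Tau'$ is closed under finite soft intersections by axiom (iii) of Definition \ref{def:softtopology} and under arbitrary soft unions by axiom (iv), each such soft set again belongs to $\Tau'$. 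Hence $\Tau_{ini} \subseteq \Tau'$, so $\Tau_{ini}$ is the coarsest soft topology making all the $(\varphi_\psi)_i$ soft continuous.

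The argument is a faithful transcription of the classical proof, so no step presents a genuine obstacle; the only point requiring care is the bookkeeping around the subbase, namely ensuring that $\Scal$ satisfies the admissibility condition of Proposition \ref{pro:softtopologyinducedbyasoftsubbase} and that closure of a soft topology under finite soft intersections and arbitrary soft unions is enough to absorb the whole of $\Tau(\Scal)$. Both are settled by the soft analogues of the ordinary topology axioms recalled above.
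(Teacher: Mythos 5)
The paper does not prove this proposition at all: it is imported verbatim from \cite{aygunoglu} as a known result, so there is no internal proof to compare against. Your argument is the standard one and is correct — the subbase admissibility check via Proposition \ref{pro:propertiesofsoftmappings}(2)--(3), soft continuity via Proposition \ref{pro:characterizationofsoftcontinuitybysoftopensets}, and minimality by absorbing $\Scal$ (hence all of $\Tau(\Scal)$) into any competing soft topology closed under finite soft intersections and arbitrary soft unions — and it fills in exactly what the cited reference is relied upon for.
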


\begin{definition}{\rm\cite{aygunoglu}}
\label{def:softprojectionmappings}
Let $\left\{ (X_i, \Tau_i, \E_i ) \right\}_{i \in I}$ be a family of soft topological spaces
over the universe sets $X_i$ with respect to the sets of parameters $\E_i$, respectively.
For every $i \in I$, the soft mapping
${\left(\pi_i \right)}_{\rho_i} : \SSG{\prod_{i \in I} X_i}{\prod_{i \in I} \E_i} \to \SSG{X_i}{\E_i}$
induced by the canonical projections $\pi_i : \prod_{i \in I} X_i \to X_i$
and $\rho_i : \prod_{i \in I} \E_i \to \E_i$
is said the \df{$i$-th soft projection mapping}
and, by setting $(\pi_\rho)_i = {\left(\pi_i \right)}_{\rho_i}$, it will be denoted by
$(\pi_\rho)_i : \SSG{\prod_{i \in I} X_i}{\prod_{i \in I} \E_i} \to \SSG{X_i}{\E_i}$.
\end{definition}

\begin{definition}{\rm\cite{aygunoglu}}
\label{def:softtopologicalproduct}
Let $\left\{ (X_i, \Tau_i, \E_i ) \right\}_{i \in I}$ be a family of soft topological spaces
and let $\left\{ (\pi_\rho)_i \right\}_{i \in I}$ be the corresponding family of soft projection mappings
$(\pi_\rho)_i : \SSG{\prod_{i \in I} X_i}{\prod_{i \in I} \E_i} \to \SSG{X_i}{\E_i}$
(with $i \in I$).
Then, the initial soft topology
$\Tau_{ini}\!\left( \prod_{i \in I} X_i, \E, X_i, \E_i, (\pi_\rho)_i; i\in I \right)$
induced on $\prod_{i \in I} X_i$ by the family of soft projection mappings
$\left\{ (\pi_\rho)_i \right\}_{i \in I}$
is called the \df{soft product topology} of the soft topologies
$\Tau_i$ (with $i \in I$)
and denoted by $\Tau\!\!\left( \prod_{i \in I} X_i \right)$.
\\
The triplet $\left( \prod_{i \in I} X_i , \Tau\!\!\left( \prod_{i \in I} X_i \right), \prod_{i \in I} \E_i \right)$
will be said the \df{soft topological product space}
of the soft topological spaces $( X_i, \Tau_i, \E_i )$.
\end{definition}


The following statement easily derives from Definition \ref{def:softtopologicalproduct}
and Proposition \ref{pro:characterizationofsoftinitialtopology}.

\begin{corollary}
\label{cor:characterizationofsofttopologicalproduct}
The soft product topology $\Tau\!\!\left( \prod_{i \in I} X_i \right)$
is the coarsest soft topology over $\SSG{\prod_{i \in I} X_i}{\prod_{i \in I} \E_i}$ for which
all the soft projection mappings
$(\pi_\rho)_i : \SSG{\prod_{i \in I} X_i}{\prod_{i \in I} \E_i} \to \SSG{X_i}{\E_i}$
(with $i \in I$) are soft continuous.
\end{corollary}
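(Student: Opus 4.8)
The plan is to recognize that this corollary is a direct specialization of the general characterization of initial soft topologies, applied to the particular family of soft mappings consisting of the soft projections. No genuinely new argument is required; the whole content lies in unwinding the definition of the soft product topology and then invoking Proposition \ref{pro:characterizationofsoftinitialtopology} verbatim.

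First I would recall from Definition \ref{def:softtopologicalproduct} that the soft product topology $\Tau\!\left( \prod_{i \in I} X_i \right)$ is, by its very definition, nothing but the initial soft topology $\Tau_{ini}\!\left( \prod_{i \in I} X_i, \E, X_i, \E_i, (\pi_\rho)_i; i\in I \right)$ induced on the universe $\prod_{i \in I} X_i$ (with parameter set $\prod_{i \in I} \E_i$) by the family $\left\{ (\pi_\rho)_i \right\}_{i \in I}$ of soft projection mappings $(\pi_\rho)_i : \SSG{\prod_{i \in I} X_i}{\prod_{i \in I} \E_i} \to \SSG{X_i}{\E_i}$. In other words, the generating soft open subbase is precisely $\Scal = \left\{ (\pi_\rho)_i^{-1}(G,\E_i) : (G,\E_i) \in \Tau_i,\ i \in I \right\}$, exactly as prescribed by Definition \ref{def:initialsofttopology}.

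Next I would apply Proposition \ref{pro:characterizationofsoftinitialtopology} to this specific initial soft topology. Here the abstract family of soft topological spaces $\left\{ (Y_i, \Tau_i, \E_i) \right\}_{i \in I}$ appearing in that proposition is instantiated by $\left\{ (X_i, \Tau_i, \E_i) \right\}_{i \in I}$, and the abstract family of soft mappings $\left\{ (\varphi_\psi)_i \right\}_{i \in I}$ is instantiated by the soft projections $\left\{ (\pi_\rho)_i \right\}_{i \in I}$. The proposition then asserts that $\Tau_{ini}$ is the coarsest soft topology on $\SSG{\prod_{i \in I} X_i}{\prod_{i \in I} \E_i}$ making every $(\pi_\rho)_i$ soft continuous, which, after replacing $\Tau_{ini}$ by $\Tau\!\left( \prod_{i \in I} X_i \right)$ via the definitional identification of the previous step, is exactly the statement of the corollary.

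I do not expect any real obstacle here: the argument is a short chain of invocations, and the only point deserving a moment of care is the bookkeeping of universes and parameter sets, namely checking that the soft projection mappings $(\pi_\rho)_i$ genuinely have domain $\SSG{\prod_{i \in I} X_i}{\prod_{i \in I} \E_i}$ and codomain $\SSG{X_i}{\E_i}$, so that they are admissible as the inducing family in Definition \ref{def:initialsofttopology}. This matching is immediate from Definition \ref{def:softprojectionmappings}, and so the corollary follows at once.
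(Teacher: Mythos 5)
Your proposal is correct and follows exactly the route the paper indicates: the corollary is obtained by noting that, by Definition \ref{def:softtopologicalproduct}, the soft product topology is the initial soft topology induced by the soft projection mappings, and then applying Proposition \ref{pro:characterizationofsoftinitialtopology} to that family. This matches the paper's own (one-line) justification, so nothing further is needed.
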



\begin{definition}{\rm\cite{nordo2019prod}}
\label{def:softslab}
Let $\left( \prod_{i \in I} X_i , \Tau\!\!\left( \prod_{i \in I} X_i \right), \prod_{i \in I} \E_i \right)$
be the soft topological product space
of the soft topological spaces $( X_i, \Tau_i, \E_i )$ (with $i \in I$)
and let $(\pi_\rho)_i : \SSG{\prod_{i \in I} X_i}{\prod_{i \in I} \E_i} \to \SSG{X_i}{\E_i}$
be the $i$-th the soft projection mapping.
The inverse soft image of a soft open set $(F_i,\E_i) \in \Tau_i$
under the soft projection mapping $(\pi_\rho)_i$, that is $(\pi_\rho)_i^{-1}  (F_i,\E_i)$
is called a \df{soft slab} and it is denoted by $\langle (F_i, \E_i) \rangle$.
\end{definition}


Definitions \ref{def:softsubbase}, \ref{def:initialsofttopology},
\ref{def:softtopologicalproduct} and \ref{def:softslab}
give immediately the following property.

\begin{proposition}{\rm\cite{nordo2019prod}}
\label{pro:softopensubbaseofsofttopologicalproduct}
The family
$\Scal = \left\{ \langle (F_i, \E_i) \rangle : \, (F_i,\E_i) \in \Tau_i , \, i \in I \right\}$
of all soft slabs of soft open sets of $\Tau_i$
is a soft open subbase of the soft topological product space
$\left( \prod_{i \in I} X_i , \Tau\!\!\left( \prod_{i \in I} X_i \right), \prod_{i \in I} \E_i \right)$.
\end{proposition}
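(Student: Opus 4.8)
The plan is to show that the family $\Scal$ of all soft slabs coincides, as a family of soft sets, with the soft open subbase appearing in the very definition of the soft product topology, so that the statement reduces to a direct unwinding of the relevant definitions.

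First I would recall that, by Definition \ref{def:softtopologicalproduct}, the soft product topology $\Tau\!\!\left( \prod_{i \in I} X_i \right)$ is precisely the initial soft topology $\Tau_{ini}\!\left( \prod_{i \in I} X_i, \E, X_i, \E_i, (\pi_\rho)_i; i\in I \right)$ induced on $\prod_{i \in I} X_i$ by the family of soft projection mappings $\left\{ (\pi_\rho)_i \right\}_{i \in I}$. In turn, by Definition \ref{def:initialsofttopology}, this initial soft topology is the soft topology $\Tau(\Scal')$ generated by the soft open subbase
$$\Scal' = \left\{ (\pi_\rho)_i^{-1}(F_i, \E_i) : \, (F_i,\E_i) \in \Tau_i , \, i \in I \right\}$$
of all soft inverse images of soft open sets of the factors under the soft projection mappings.

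Next I would invoke Definition \ref{def:softslab}, according to which each soft slab is exactly $\langle (F_i, \E_i) \rangle \softequal (\pi_\rho)_i^{-1}(F_i, \E_i)$ for some $(F_i,\E_i) \in \Tau_i$ and some $i \in I$. Comparing the two descriptions, the family $\Scal = \left\{ \langle (F_i, \E_i) \rangle : \, (F_i,\E_i) \in \Tau_i , \, i \in I \right\}$ of all soft slabs and the subbase $\Scal'$ are literally the same family of soft sets. Since, by Proposition \ref{pro:softtopologyinducedbyasoftsubbase} together with Definition \ref{def:softtopologyinducedbyasoftsubbase}, $\Scal'$ is a soft open subbase of $\Tau(\Scal')$, it follows that $\Scal$ is a soft open subbase of the soft product topology.

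The only point requiring a small check is that $\Scal$ meets the admissibility condition of Proposition \ref{pro:softtopologyinducedbyasoftsubbase}, namely that it contains both $\nullsoftsetG{\prod_{i \in I} \E_i}$ and $\absolutesoftsetG{\prod_{i \in I} X_i}{\prod_{i \in I} \E_i}$; this is immediate, since $\nullsoftset, \absolutesoftset[X_i] \in \Tau_i$ for each $i$ and, by Proposition \ref{pro:propertiesofsoftmappings}, items (2) and (3), their soft inverse images under any $(\pi_\rho)_i$ are respectively the null and the absolute soft set of the product. I expect no genuine obstacle here: the entire content of the proposition is the observation that the soft slabs are, by construction, exactly the subbasic soft open sets used to define the product topology as an initial soft topology.
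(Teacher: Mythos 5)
Your proposal is correct and takes essentially the same route as the paper, which gives no separate proof and simply observes that the statement follows immediately from Definitions \ref{def:softsubbase}, \ref{def:initialsofttopology}, \ref{def:softtopologicalproduct} and \ref{def:softslab}: the soft slabs are by construction exactly the subbasic soft sets used to define the product topology as an initial soft topology. Your additional check that $\Scal$ contains the null and absolute soft sets is a harmless and correct extra detail.
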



\begin{proposition}{\rm\cite{nordo2019prod}}
\label{pro:softslabassoftproduct}
Let $\left( \prod_{i \in I} X_i , \Tau\!\!\left( \prod_{i \in I} X_i \right), \prod_{i \in I} \E_i \right)$
be the soft topological product space of the soft topological spaces $( X_i, \Tau_i, \E_i )$, with $i \in I$
and let $(F_j,\E_j) \in \Tau_j$ be a soft open set of $X_j$, then its soft slab
$\langle (F_j,\E_j) \rangle$ coincides with a soft cartesian product in which only the $j$-th component
is the soft set $(F_j,\E_j)$ and the other ones are the absolute soft sets $\absolutesoftsetG{X_i}{\E_i}$, that is
$$\langle (F_j,\E_j) \rangle \, \softequal \, \softprod_{i \in I} (A_i, \E_i)
\quad \text{where} \quad
(A_i, \E_i) = \left\{
\begin{array}{ll}
(F_j, \E_j)
& \text{ if } i=j
\\[2mm]
\absolutesoftsetG{X_i}{\E_i} & \text{ otherwise}
\end{array}
\right. .
$$
\end{proposition}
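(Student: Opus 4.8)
The plan is to verify the claimed soft equality parameter-by-parameter, by comparing the set-valued mappings that define the two sides and invoking Remark \ref{rem:softequal}. First I would unfold the left-hand side: by Definition \ref{def:softslab} the soft slab $\langle (F_j,\E_j)\rangle$ is the soft inverse image $(\pi_\rho)_j^{-1}(F_j,\E_j)$, so applying Definition \ref{def:softinverseimage} to the soft projection $(\pi_\rho)_j = (\pi_j)_{\rho_j}$ induced by $\pi_j$ and $\rho_j$, its associated set-valued mapping satisfies, for every $\langle e_i\rangle_{i\in I}\in\prod_{i\in I}\E_i$,
\[
(\pi_\rho)_j^{-1}(F_j)\bigl(\langle e_i\rangle_{i\in I}\bigr)
= \pi_j^{-1}\bigl(F_j(\rho_j(\langle e_i\rangle_{i\in I}))\bigr)
= \pi_j^{-1}\bigl(F_j(e_j)\bigr),
\]
where the last equality uses that $\rho_j:\prod_{i\in I}\E_i\to\E_j$ is the canonical parameter projection, so $\rho_j(\langle e_i\rangle_{i\in I}) = e_j$.

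Next I would identify this crisp preimage explicitly. Since $\pi_j:\prod_{i\in I}X_i\to X_j$ is the coordinate projection, the preimage $\pi_j^{-1}(F_j(e_j))$ consists of all $\langle x_i\rangle_{i\in I}$ with $x_j\in F_j(e_j)$ and no constraint on the other coordinates; that is, it equals the crisp product $\prod_{i\in I}B_i$ with $B_j = F_j(e_j)$ and $B_i = X_i$ for $i\ne j$. Then I would expand the right-hand side: by Definition \ref{def:softproductofsoftsets}, the set-valued mapping of $\softprod_{i\in I}(A_i,\E_i)$ sends $\langle e_i\rangle_{i\in I}$ to $\prod_{i\in I}A_i(e_i)$; since $A_j = (F_j,\E_j)$ gives $A_j(e_j) = F_j(e_j)$, while each $A_i$ with $i\ne j$ is the absolute soft set and so $A_i(e_i) = X_i$ by Definition \ref{def:absolutesoftset}, this product is precisely the $\prod_{i\in I}B_i$ above. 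The two set-valued mappings thus coincide at every parameter $\langle e_i\rangle_{i\in I}$, and by Remark \ref{rem:softequal} the desired soft equality $\langle (F_j,\E_j)\rangle\softequal\softprod_{i\in I}(A_i,\E_i)$ follows.

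There is no genuine obstacle here; the computation is a direct unwinding of the two definitions. The only point deserving care is the identification of the coordinate preimage $\pi_j^{-1}(F_j(e_j))$ with the crisp cartesian product carrying $F_j(e_j)$ in the $j$-th slot and the full factor $X_i$ in every other slot, which is exactly the classical description of a cylinder set over the $j$-th axis, together with the observation that the absolute soft set contributes the whole universe $X_i$ at each parameter. An equivalent route, should one prefer a point-based argument, would be to test soft-point membership on both sides, combining Proposition \ref{pro:softpointsoftbelongingtosoftproduct} with Proposition \ref{pro:softinclusionbysoftpoint}.
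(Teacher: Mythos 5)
Your argument is correct and follows essentially the same route as the paper's own proof: unfold both sides into their set-valued mappings, identify $\pi_j^{-1}(F_j(e_j))$ with the classical cylinder (slab) $\prod_{i\in I}B_i$ carrying $F_j(e_j)$ in the $j$-th slot and $X_i$ elsewhere, and conclude by Remark \ref{rem:softequal}. No gaps.
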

\begin{proof}
By Definitions \ref{def:softslab} and \ref{def:softinverseimage}, we have that
$$\langle (F_j,\E_j) \rangle \, \softequal \, (\pi_\rho)_j^{-1}(F_j,\E_j) \, \softequal
\left( (\pi_\rho)_j^{-1}(F_j) , \prod_{i \in I} \E_i \right)$$
where $(\pi_\rho)_j^{-1}(F_j) : \prod_{i \in I} \E_i \to \PP(\prod_{i \in I} X_i)$ is the set-valued mapping
defined by
$\left((\pi_\rho)_j^{-1}(F_j)\right)\!(e) = \pi_j^{-1}\left( F_j\left( \rho_j(e) \right) \right)$
for every $e = \langle e_i \rangle_{i \in I} \in  \prod_{i \in I} \E_i$.

On the other hand, by Definition \ref{def:softproductofsoftsets}, it results
$$\softprod_{i \in I} (A_i, \E_i) \, \softequal
\left( \prod_{i \in I} A_i , \prod_{i \in I} \E_i \right)$$
where $\prod_{i \in I} A_i : \prod_{i \in I} \E_i \to \PP(\prod_{i \in I} X_i)$ is the set-valued mapping
defined by
$\left( \prod_{i \in I} A_i \right)\!(e) = \prod_{i \in I} A_i(e_i)$
for every $e = \langle e_i \rangle_{i \in I} \in  \prod_{i \in I} \E_i$
and since $A_j(e_j)=F_j(e_j)$ and $A_i(e_i)=X_i$ for every $i \in I \setminus \{ j \}$, it follows that
$\left( \prod_{i \in I} A_i \right)\!(e) = \langle F_j(e_j) \rangle$,
where the last set is the classical slab of the set $F_j(e_j)$ in the usual cartesian product $\prod_{i \in I} X_i$.
Thus, we also have that
$\left( \prod_{i \in I} A_i \right)\!(e) = \pi_j^{-1}(F_j(e_j)) = \pi_j^{-1}(F_j(\rho_j(e)))
= \left( (\pi_\rho)_j^{-1}(F_j)\right)\!(e)$
for every $e \in \E$, and so, by Remark \ref{rem:softequal}, the soft equality holds.
\end{proof}


\begin{definition}{\rm\cite{nordo2019prod}}
\label{def:softnslab}
The soft intersection of a finite family of slab $\langle (F_{i_1}, \E_{i_1}) \rangle$
of soft open sets $(F_{i_k}, \E_{i_k}) \in \Tau_{i_k}$ (with $k=1,\ldots n$), that is
$\softbigcap_{k=1}^n \langle (F_{i_k}, \E_{i_k}) \rangle$ is said to be a \df{soft $n$-slab}
and it is denoted by $\langle (F_{i_1}, \E_{i_1}), \ldots (F_{i_n}, \E_{i_n}) \rangle$.
\end{definition}


Definitions \ref{def:softbase}, \ref{def:softsubbase}, \ref{def:initialsofttopology},
\ref{def:softtopologicalproduct} and \ref{def:softnslab}
allow us to obtain the following property.

\begin{proposition}{\rm\cite{nordo2019prod}}
\label{pro:softopenbaseofsofttopologicalproduct}
The family
$$\Bcal = \left\{ \langle (F_{i_1}, \E_{i_1}), \ldots (F_{i_n}, \E_{i_n}) \rangle : \,
(F_{i_k},\E_{i_k}) \in \Tau_{i_k} , \, i_k \in I , n \in \NN^* \right\}$$
of all soft $n$-slabs of soft open sets of $\Tau_i$
is a soft open base of the soft topological product space
$\left( \prod_{i \in I} X_i , \Tau\!\!\left( \prod_{i \in I} X_i \right), \prod_{i \in I} \E_i \right)$.
\end{proposition}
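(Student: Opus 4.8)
The plan is to deduce the statement directly from the notion of soft open subbase, essentially by unwinding two definitions. First I would recall Proposition~\ref{pro:softopensubbaseofsofttopologicalproduct}, which already establishes that the family $\Scal = \left\{ \langle (F_i, \E_i) \rangle : (F_i,\E_i) \in \Tau_i, \, i \in I \right\}$ of all soft slabs is a soft open subbase of the soft topological product space. By Definition~\ref{def:softsubbase}, the very assertion that $\Scal$ is a soft open subbase \emph{means} that the family of all finite soft intersections of members of $\Scal$ is a soft open base for $\Tau\!\left( \prod_{i \in I} X_i \right)$. Thus the whole proof reduces to identifying this family of finite soft intersections with the family $\Bcal$ appearing in the statement.

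That identification is the heart of the argument, and it is immediate from Definition~\ref{def:softnslab}: a soft $n$-slab is by definition a soft intersection $\softbigcap_{k=1}^n \langle (F_{i_k}, \E_{i_k}) \rangle$ of finitely many soft slabs, and conversely every finite soft intersection of members of $\Scal$ is exactly of this shape. Letting $n$ range over $\NN^*$ and the indices $i_k$ over $I$ (with the soft open factors $(F_{i_k},\E_{i_k}) \in \Tau_{i_k}$) I recover precisely the family $\Bcal$. Hence $\Bcal$ is a soft open base, which is what the statement asserts.

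A couple of routine boundary checks would complete the write-up. I would verify that $\Bcal \softsubseteq \Tau\!\left( \prod_{i \in I} X_i \right)$: each soft slab is a subbase member and hence a soft open set, and a soft $n$-slab is a finite soft intersection of such sets, so it is soft open by repeated application of axiom~(iii) of Definition~\ref{def:softtopology}. I would also note that the absolute soft set is captured among the soft $n$-slabs, since for any fixed $j \in I$ the slab of the absolute factor satisfies $\langle \absolutesoftsetG{X_j}{\E_j} \rangle \softequal \absolutesoftsetG{\prod_{i \in I} X_i}{\prod_{i \in I} \E_i}$ (an instance of Proposition~\ref{pro:softslabassoftproduct}), so restricting to $n \in \NN^*$ costs no generality in the base.

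The main obstacle here is not any computation but the bookkeeping of the identification in the second paragraph: one must make fully explicit that ``finite soft intersection of members of $\Scal$'' and ``soft $n$-slab'' describe the same soft sets, being careful that the indices $i_k$ in Definition~\ref{def:softnslab} need not be distinct and that $n$ is an arbitrary positive integer, so that this family genuinely exhausts all finite soft intersections. Once that correspondence is stated, nothing about the internal structure of the soft product topology is needed beyond what Proposition~\ref{pro:softopensubbaseofsofttopologicalproduct} supplies, and the conclusion follows directly from Definition~\ref{def:softsubbase}.
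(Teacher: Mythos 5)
Your proposal is correct and matches the paper's approach: the paper gives no separate proof, simply asserting that the statement follows from Definitions \ref{def:softbase}, \ref{def:softsubbase}, \ref{def:initialsofttopology}, \ref{def:softtopologicalproduct} and \ref{def:softnslab}, which is precisely the unwinding you carry out (soft slabs form a soft open subbase, finite soft intersections of subbase members form a base by definition, and soft $n$-slabs are exactly those finite soft intersections). Your additional boundary checks are harmless extra care rather than a different route.
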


\begin{proposition}{\rm\cite{nordo2019prod}}
\label{pro:softnslabassoftproduct}
Let $\left( \prod_{i \in I} X_i , \Tau\!\!\left( \prod_{i \in I} X_i \right), \prod_{i \in I} \E_i \right)$
be the soft topological product space of the soft topological spaces $( X_i, \Tau_i, \E_i )$, with $i \in I$
and let $(F_{i_k},\E_{i_k}) \in \Tau_{i_k}$ be a finite family of soft open sets of $X_{i_k}$,
with $k=1,\ldots n$, respectively, then the soft $n$-slab
$\langle (F_{i_1}, \E_{i_1}), \ldots (F_{i_n}, \E_{i_n}) \rangle$
coincides with a soft cartesian product in which only the $i_k$-th components
(with $k=1,\ldots n$)
are the soft sets $(F_{i_k},\E_{i_k})$ and the other ones are the absolute soft sets $\absolutesoftsetG{X_i}{\E_i}$, that is
$$\langle (F_{i_1}, \E_{i_1}),  \ldots (F_{i_n}, \E_{i_n}) \rangle
\, \softequal \, \softprod_{i \in I} (A_i, \E_i)$$
where
$$
(A_i, \E_i) = \left\{
\begin{array}{ll}
(F_{i_k}, \E_{i_k})
& \text{ if } i=i_k \text{ for some } k=1,\ldots n
\\[2mm]
\absolutesoftsetG{X_i}{\E_i} & \text{ otherwise}
\end{array}
\right. .
$$
\end{proposition}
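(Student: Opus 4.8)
The plan is to reduce the soft $n$-slab to the single-slab case already established in Proposition \ref{pro:softslabassoftproduct} and then to move the soft intersection inside the soft product by means of the distributive law of Proposition \ref{pro:distributivepropertyofsoftproductrespecttosoftintersection}. First I would invoke Definition \ref{def:softnslab} to rewrite the soft $n$-slab as the soft intersection of the individual soft slabs, that is
$$\langle (F_{i_1}, \E_{i_1}), \ldots (F_{i_n}, \E_{i_n}) \rangle \, \softequal \, \softbigcap_{k=1}^n \langle (F_{i_k}, \E_{i_k}) \rangle .$$
By Proposition \ref{pro:softslabassoftproduct}, each factor $\langle (F_{i_k}, \E_{i_k}) \rangle$ coincides with the soft product $\softprod_{i \in I} (B_i^{(k)}, \E_i)$ whose $i_k$-th component equals $(F_{i_k}, \E_{i_k})$ and whose remaining components are the absolute soft sets $\absolutesoftsetG{X_i}{\E_i}$.

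Next I would apply the distributive property of the soft product with respect to the soft intersection (Proposition \ref{pro:distributivepropertyofsoftproductrespecttosoftintersection}), extended from two families to the finite family of the $n$ slabs by a straightforward induction on $n$. This turns the soft intersection of the $n$ products into a single soft product $\softprod_{i \in I} (A_i, \E_i)$ whose $i$-th factor is the soft intersection $\softbigcap_{k=1}^n (B_i^{(k)}, \E_i)$ of the corresponding components.

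It then remains to evaluate each such factor. For an index $i = i_k$ among the distinguished ones, exactly one component, namely the one coming from the $k$-th slab, equals $(F_{i_k}, \E_{i_k})$, while all the others are the absolute soft set $\absolutesoftsetG{X_i}{\E_i}$; by repeated use of Proposition \ref{pro:propertiesunionandintersection}(6) their soft intersection collapses to $(F_{i_k}, \E_{i_k})$. For an index $i \notin \{ i_1, \ldots, i_n \}$, every component is the absolute soft set, and the same property yields $\absolutesoftsetG{X_i}{\E_i}$. This reproduces exactly the soft set $(A_i, \E_i)$ prescribed in the statement, so the desired soft equality follows.

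The argument is essentially bookkeeping, so the only genuine care lies in the inductive extension of the binary distributive law of Proposition \ref{pro:distributivepropertyofsoftproductrespecttosoftintersection} to the $n$ factors, and in the tacit assumption that the indices $i_1, \ldots, i_n$ are pairwise distinct; otherwise a repeated index would contribute a soft intersection $(F_{i_k}, \E_{i_k}) \softcap (F_{i_l}, \E_{i_l})$ in that position rather than a single soft set, and the stated form of $(A_i,\E_i)$ would no longer hold.
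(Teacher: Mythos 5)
Your proof is correct, but it takes a genuinely different route from the paper's. The paper does not reuse Proposition \ref{pro:softslabassoftproduct} as a black box: it redoes the computation at the level of set-valued mappings, evaluating both $\softbigcap_{k=1}^n (\pi_\rho)_{i_k}^{-1}(F_{i_k},\E_{i_k})$ and $\softprod_{i\in I}(A_i,\E_i)$ at an arbitrary parameter tuple $e=\langle e_i\rangle_{i\in I}$, checking that each yields the classical $n$-slab $\bigcap_{k=1}^n \pi_{i_k}^{-1}\left(F_{i_k}(e_{i_k})\right)$ in $\prod_{i\in I}X_i$, and concluding by Remark \ref{rem:softequal}. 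You instead stay entirely within the soft-set calculus: the single-slab case of Proposition \ref{pro:softslabassoftproduct}, the distributive law of Proposition \ref{pro:distributivepropertyofsoftproductrespecttosoftintersection} extended inductively to $n$ factors, and the collapse of the componentwise intersections via Proposition \ref{pro:propertiesunionandintersection}(6). Your version is more modular and avoids repeating the parameter-by-parameter bookkeeping; the paper's version is self-contained and needs no inductive extension of the binary distributive law. Your remark that the indices $i_1,\ldots,i_n$ must tacitly be pairwise distinct is a genuine and worthwhile observation, and it applies equally to the paper's own proof, which likewise writes $A_{i_k}(e_{i_k})=F_{i_k}(e_{i_k})$ for every $k$ without comment.
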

\begin{proof}
Similarly to the proof of Proposition \ref{pro:softslabassoftproduct},
by applying Definitions \ref{def:softnslab}, \ref{def:softintersection}, \ref{def:softslab}
and \ref{def:softinverseimage}, we have that
$$
\begin{array}{ll}
\langle (F_{i_1}, \E_{i_1}), \ldots (F_{i_n}, \E_{i_n}) \rangle \!\!\!\! &\softequal \;
\softbigcap_{k=1}^n \langle (F_{i_k}, \E_{i_k}) \rangle
\\[3mm]
&\softequal \; \softbigcap_{k=1}^n  (\pi_\rho)_{i_k}^{-1}(F_{i_k},\E_{i_k})
\\[3mm]
& \softequal \left( \bigcap_{k=1}^n (\pi_\rho)_{i_k}^{-1}(F_{i_k}) , \prod_{i \in I} \E_i \right)
\end{array}
$$
where $\bigcap_{k=1}^n (\pi_\rho)_{i_k}^{-1}(F_{i_k}) : \prod_{i \in I} \E_i \to \PP(\prod_{i \in I} X_i)$
is the set-valued mapping defined by
$\left( \bigcap_{k=1}^n (\pi_\rho)_{i_k}^{-1}(F_{i_k}) \right)\!(e)
= \bigcap_{k=1}^n  \pi_{i_k}^{-1}\left( F_{i_k}\left( \rho_{i_k}(e) \right) \right)$
for every $e = \langle e_i \rangle_{i \in I} \in  \prod_{i \in I} \E_i$.

On the other hand, by Definition \ref{def:softproductofsoftsets}, it results
$$\softprod_{i \in I} (A_i, \E_i) \, \softequal
\left( \prod_{i \in I} A_i , \prod_{i \in I} \E_i \right)$$
where $\prod_{i \in I} A_i : \prod_{i \in I} \E_i \to \PP(\prod_{i \in I} X_i)$ is the set-valued mapping
defined by
$\left( \prod_{i \in I} A_i \right)\!(e) = \prod_{i \in I} A_i(e_i)$
for every $e = \langle e_i \rangle_{i \in I} \in  \prod_{i \in I} \E_i$
and since $A_{i_k}(e_{i_k})=F_{i_k}(e_{i_k})$ for every $k=1,\ldots n$
and $A_i(e_i)=X_i$ for every $i \in I \setminus \{ i_1,\ldots i_n \}$, it follows that
$\left( \prod_{i \in I} A_i \right)\!(e) = \langle F_{i_1}(e_{i_1}), \ldots F_{i_n}(e_{i_n}) \rangle$,
where the last set is the classical $n$-slab of the sets $F_{i_k}(e_{i_k})$ (for $k=1,\ldots n$)
in the usual cartesian product $\prod_{i \in I} X_i$.
Thus, we also have that
$\left( \prod_{i \in I} A_i \right)\!(e)
= \bigcap_{k=1}^n \langle F_{i_k}(e_{i_k}) \rangle
= \bigcap_{k=1}^n \pi_{i_k}^{-1}(F_{i_k}(e_{i_k}))
= \bigcap_{k=1}^n \pi_{i_k}^{-1}(F_{i_k}(\rho_{i_k}(e)))
= \left( \bigcap_{k=1}^n (\pi_\rho)_{i_k}^{-1}(F_{i_k}) \right)\!(e)$
for every $e \in \E$, and so, by Remark \ref{rem:softequal}, the proposition is proved.
\end{proof}


\begin{proposition}{\rm\cite{aygunoglu}}
\label{pro:characterizationofsoftcontinuityonsoftotpologicalproduct}
Let $\left\{ (X_i, \Tau_i, \E_i ) \right\}_{i \in I}$ be a family of soft topological spaces,
$\left( X , \Tau\!(X), \E \right)$
be the soft topological product of such soft spaces
induced on the product $X=\prod_{i \in I} X_i $ of universe sets
with respect to the product $\E=\prod_{i \in I} \E_i $ of the sets of parameters,
$(Y, \Tau', \E')$ be a soft topological space and
$\varphi_\psi : \SSG{Y}{\E'} \to \SSG{X}{\E}$ be a soft mapping
induced by the mappings $\varphi: Y \to X$ and $\psi: \E' \to \E$.
Then the soft mappings $\varphi_\psi $ is soft continuous if and only if,
for every $i \in I$, the soft compositions  $(\pi_\rho)_i \, \softcirc \,
\varphi_\psi$
with the soft projection mappings $(\pi_\rho)_i : \SSG{X}{\E} \to \SSG{X_i}{\E_i}$
are soft continuous mappings.
\end{proposition}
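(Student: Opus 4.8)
The plan is to establish both implications through the characterization of soft continuity by soft inverse images of soft open sets (Proposition \ref{pro:characterizationofsoftcontinuitybysoftopensets}), exploiting the fact that the soft product topology is generated by the soft open subbase of soft slabs (Proposition \ref{pro:softopensubbaseofsofttopologicalproduct}).

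First, for the forward implication, I would assume $\varphi_\psi$ soft continuous and recall that, by Corollary \ref{cor:characterizationofsofttopologicalproduct}, every soft projection mapping $(\pi_\rho)_i$ is soft continuous. It then remains to observe that the soft composition of two soft continuous soft mappings is again soft continuous. This reduces to a single computation showing that soft inverse image commutes with soft composition, namely $(\gamma_\delta \softcirc \varphi_\psi)^{-1}(H,\E'') \softequal \varphi_\psi^{-1}\left( \gamma_\delta^{-1}(H,\E'') \right)$, which follows directly by unfolding Definitions \ref{def:softcompositionofsoftmappings} and \ref{def:softinverseimage} and using $(\gamma\circ\varphi)^{-1}=\varphi^{-1}\circ\gamma^{-1}$ at each parameter; combined with Proposition \ref{pro:characterizationofsoftcontinuitybysoftopensets} this yields that each $(\pi_\rho)_i \softcirc \varphi_\psi$ is soft continuous.

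The substantive direction is the converse. Assuming every soft composition $(\pi_\rho)_i \softcirc \varphi_\psi$ is soft continuous, by Proposition \ref{pro:characterizationofsoftcontinuitybysoftopensets} it suffices to prove that $\varphi_\psi^{-1}(G,\E)$ is soft open in $Y$ for every soft open set $(G,\E)$ of the product $X$. By Proposition \ref{pro:softopensubbaseofsofttopologicalproduct} together with Definition \ref{def:softsubbase}, any such $(G,\E)$ can be written as a soft union of finite soft intersections of soft slabs $\langle (F_i,\E_i)\rangle \softequal (\pi_\rho)_i^{-1}(F_i,\E_i)$ with $(F_i,\E_i)\in\Tau_i$. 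Since, by Proposition \ref{pro:propertiesofsoftmappings}, soft inverse image commutes with arbitrary soft unions (item (11)) and with finite soft intersections (item (12)), the problem is reduced to showing that the soft inverse image under $\varphi_\psi$ of a single soft slab is soft open. Using again the composition identity from the first part, I would compute $\varphi_\psi^{-1}\left( (\pi_\rho)_i^{-1}(F_i,\E_i) \right) \softequal \left( (\pi_\rho)_i \softcirc \varphi_\psi \right)^{-1}(F_i,\E_i)$, which is soft open by the assumed soft continuity of $(\pi_\rho)_i \softcirc \varphi_\psi$ and Proposition \ref{pro:characterizationofsoftcontinuitybysoftopensets}. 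Re-assembling through the union and intersection steps then gives that $\varphi_\psi^{-1}(G,\E)$ is soft open, as required.

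I expect the main obstacle to be bookkeeping rather than conceptual: the one genuinely load-bearing fact is the commutation of soft inverse image with soft composition, and the only delicate point is to ensure that the reduction to the subbase is legitimate, i.e.\ that $\varphi_\psi^{-1}$ distributes over the soft unions and finite soft intersections that express an arbitrary soft open set of the product in terms of soft slabs, which is exactly what items (11) and (12) of Proposition \ref{pro:propertiesofsoftmappings} guarantee. Conceptually the statement is just the universal property of the initial soft topology (Proposition \ref{pro:characterizationofsoftinitialtopology}) made explicit for the product, so no new machinery beyond the cited results should be needed.
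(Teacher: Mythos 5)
The paper states this proposition as a recalled result from Ayg\"{u}no\u{g}lu and Ayg\"{u}n and gives no proof of its own, so there is nothing internal to compare against. Your argument is correct and is the standard one: the forward direction via the soft continuity of the projections (Corollary \ref{cor:characterizationofsofttopologicalproduct}) and the commutation of soft inverse image with soft composition, and the converse by reducing to the subbase of soft slabs (Proposition \ref{pro:softopensubbaseofsofttopologicalproduct}) and distributing $\varphi_\psi^{-1}$ over soft unions and finite soft intersections via Proposition \ref{pro:propertiesofsoftmappings}(11)--(12); the one load-bearing computation, $\left( \gamma_\delta \softcirc \varphi_\psi \right)^{-1}(H,\E'') \softequal \varphi_\psi^{-1}\!\left( \gamma_\delta^{-1}(H,\E'') \right)$, does follow by unfolding Definitions \ref{def:softcompositionofsoftmappings} and \ref{def:softinverseimage} parameterwise. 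No gaps.
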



Let us note that the soft cartesian product $\softprod_{i \in I} (F_i,\E_i)$
of a family $\left\{(F_i,\E_i) \right\}_{i\in I}$ of soft sets
over a set $X_i$ with respect to a set of parameters $\E_i$, respectively,
as introduced in Definition \ref{def:softproductofsoftsets},
is a soft set of the soft topological product space
$\left( \prod_{i \in I} X_i  , \Tau\!\!\left( \prod_{i \in I} X_i \right), \prod_{i \in I} \E_i \right)$
i.e. that $\softprod_{i \in I} (F_i,\E_i) \in \SSG{\prod_{i \in I} X_i}{\prod_{i \in I} \E_i}$
and the following statement holds.

\begin{proposition}{\rm\cite{nordo2019prod}}
\label{pro:softclosureofasoftproduct}
Let $\left( \prod_{i \in I} X_i  , \Tau\!\!\left( \prod_{i \in I} X_i \right), \prod_{i \in I} \E_i \right)$
be the soft topological product space
of a family $\left\{ (X_i, \Tau_i, \E_i ) \right\}_{i \in I}$ of soft topological spaces
and let $\softprod_{i \in I} (F_i,\E_i)$ be the soft product
in $\SSG{\prod_{i \in I} X_i}{\prod_{i \in I} \E_i}$
of a family $\left\{(F_i,\E_i) \right\}_{i\in I}$ of soft sets of $\SSG{X_i}{\E_i}$,
for every $i \in I$.
Then the soft closure of $\softprod_{i \in I} (F_i,\E_i)$
in the soft topological product
$\left( \prod_{i \in I} X_i  , \Tau\!\!\left( \prod_{i \in I} X_i \right), \prod_{i \in I} \E_i \right)$
coincides with the soft product of the corresponding soft closures of the soft sets $(F_i,\E_i)$
in the corresponding soft topological spaces $(X_i, \Tau_i, \E_i )$, that is:
$$
\softclpar[\prod_{i \in I} X_i]{ \softprod_{i \in I} (F_i,\E_i)} \softequal \,
\softprod_{i \in I} \softcl[X_i]{(F_i,\E_i)} .
$$
\end{proposition}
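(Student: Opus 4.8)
The plan is to prove the soft equality by showing that the two soft sets soft contain exactly the same soft points and then invoking Proposition~\ref{pro:softinclusionbysoftpoint} in both directions. So I would fix an arbitrary soft point $\left(p_\alpha,\prod_{i\in I}\E_i\right)$ of the product universe, with $p=\langle p_i\rangle_{i\in I}$ and $\alpha=\langle\alpha_i\rangle_{i\in I}$, and aim to establish the equivalence
$$\left(p_\alpha,\textstyle\prod_{i\in I}\E_i\right)\softin\softclpar[\prod_{i \in I} X_i]{\softprod_{i\in I}(F_i,\E_i)}\iff\left(p_\alpha,\textstyle\prod_{i\in I}\E_i\right)\softin\softprod_{i\in I}\softcl[X_i]{(F_i,\E_i)}.$$
The first reduction translates both memberships into statements about soft adherent points: by Proposition~\ref{pro:softclosureisthesetofsoftadherentpoints} the left-hand side says that $\left(p_\alpha,\prod_{i\in I}\E_i\right)$ is a soft adherent point of $\softprod_{i\in I}(F_i,\E_i)$, while by Proposition~\ref{pro:softpointsoftbelongingtosoftproduct} followed by Proposition~\ref{pro:softclosureisthesetofsoftadherentpoints} the right-hand side says that, for every $i\in I$, the component soft point $\left((p_i)_{\alpha_i},\E_i\right)$ is a soft adherent point of $(F_i,\E_i)$.

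For the forward implication I would assume that $\left(p_\alpha,\prod_{i\in I}\E_i\right)$ is soft adherent to the product. Fixing an index $j\in I$ and an arbitrary soft open set $(G,\E_j)\in\Tau_j$ soft containing $\left((p_j)_{\alpha_j},\E_j\right)$, I would form the soft slab $\langle(G,\E_j)\rangle$, which is a soft open soft neighbourhood of $\left(p_\alpha,\prod_{i\in I}\E_i\right)$ in the product. Soft adherence then gives $\langle(G,\E_j)\rangle\softcap\softprod_{i\in I}(F_i,\E_i)\softnotequal\nullsoftsetG{\prod_{i\in I}\E_i}$; rewriting the slab as a soft product via Proposition~\ref{pro:softslabassoftproduct} and distributing the soft intersection over the soft product via Proposition~\ref{pro:distributivepropertyofsoftproductrespecttosoftintersection}, this soft intersection becomes a soft product whose $j$-th factor is $(G,\E_j)\softcap(F_j,\E_j)$ and whose remaining factors are the $(F_i,\E_i)$. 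Corollary~\ref{cor:softnullityofsoftproduct} then forces every factor, in particular $(G,\E_j)\softcap(F_j,\E_j)$, to be soft non-null, so $(G,\E_j)$ soft meets $(F_j,\E_j)$; since every soft neighbourhood soft contains a soft open one, this proves that $\left((p_j)_{\alpha_j},\E_j\right)$ is soft adherent to $(F_j,\E_j)$ for each $j\in I$.

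The backward implication is dual but needs the soft open base of soft $n$-slabs. Assuming that each component soft point is soft adherent to $(F_i,\E_i)$, I would verify soft adherence of $\left(p_\alpha,\prod_{i\in I}\E_i\right)$ by testing soft neighbourhoods, and by Proposition~\ref{pro:characterizationofsoftopenbase} together with Proposition~\ref{pro:softopenbaseofsofttopologicalproduct} it is enough to test the basic soft $n$-slabs $W=\langle(G_{i_1},\E_{i_1}),\ldots,(G_{i_n},\E_{i_n})\rangle$ that soft contain the point. Writing $W$ as a soft product through Proposition~\ref{pro:softnslabassoftproduct}, reading off from Proposition~\ref{pro:softpointsoftbelongingtosoftproduct} that each $(G_{i_k},\E_{i_k})$ is a soft neighbourhood of the corresponding component soft point, and distributing again by Proposition~\ref{pro:distributivepropertyofsoftproductrespecttosoftintersection}, I would obtain that $W\softcap\softprod_{i\in I}(F_i,\E_i)$ is a soft product whose factors are $(G_{i_k},\E_{i_k})\softcap(F_{i_k},\E_{i_k})$ for $k=1,\ldots,n$ and $(F_i,\E_i)$ otherwise; by Corollary~\ref{cor:softnullityofsoftproduct} this soft intersection is soft non-null provided all factors are, so $W$ soft meets the product.

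I expect the main obstacle to be precisely the infinitely many \lq\lq tail\rq\rq\ coordinates in which a slab equals the whole absolute soft set: the soft intersection $W\softcap\softprod_{i\in I}(F_i,\E_i)$ is soft non-null only when all of its factors, including the untouched tail factors $(F_i,\E_i)$, are soft non-null. This is where Corollary~\ref{cor:softnullityofsoftproduct} is indispensable, together with the observation that soft adherence of a soft point (tested against the absolute soft set as a soft neighbourhood) already guarantees that each $(F_i,\E_i)$ is soft non-null; the degenerate case in which some factor is soft null then makes both sides soft null and is disposed of at once. Having the coordinatewise equivalence in hand, I would finally conclude the soft equality by applying Proposition~\ref{pro:softinclusionbysoftpoint} in both directions.
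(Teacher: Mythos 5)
Your proposal is correct and follows essentially the same route as the paper's own proof: both directions are reduced to soft adherence, tested against soft slabs (respectively basic soft $n$-slabs), rewritten as soft products via Propositions \ref{pro:softslabassoftproduct} and \ref{pro:softnslabassoftproduct}, and settled with the distributivity of the soft product over soft intersection together with Corollary \ref{cor:softnullityofsoftproduct}. Your explicit remark that the untouched tail factors $(F_i,\E_i)$ are soft non-null (because each component soft point is soft adherent to them) is a point the paper passes over more quickly, but it does not change the argument.
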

\begin{proof}
Let $\left( x_\alpha , \prod_{i \in I} \E_i \right)$ be a soft point
of $\SPG{\prod_{i \in I} X_i}{\prod_{i \in I} \E_i}$,
with $x = \langle x_i \rangle_{i \in I}$ and $\alpha = \langle \alpha_i \rangle_{i \in I}$,
such that
$\left( x_\alpha , \prod_{i \in I} \E_i \right) \softin \,
\softclpar[\prod_{i \in I} X_i]{ \softprod_{i \in I} (F_i,\E_i)} $.
For any $j \in I$, let us consider a soft open set $(N_j, \E_j) \in \Tau_j$
such that $\left( (x_j)_{\alpha_j} , \E_j \right) \softin (N_j, \E_j)$.
By Proposition \ref{pro:softopensubbaseofsofttopologicalproduct}, the soft slab
$\langle (N_j, \E_j)\rangle$
is a soft open set of the soft open subbase of the soft topological product space $\prod_{i \in I} X_i$.
By Proposition \ref{pro:softslabassoftproduct}, we know that
$$\langle (N_j,\E_j) \rangle \, \softequal \, \softprod_{i \in I} (A_i, \E_i)
\quad \text{where} \quad
(A_i, \E_i) = \left\{
\begin{array}{ll}
(N_j, \E_j)
& \text{ if } i=j
\\[2mm]
\absolutesoftsetG{X_j}{\E_j} & \text{ otherwise}
\end{array}
\right.
$$
and so that $\left( x_\alpha , \prod_{i \in I} \E_i \right) \softin \langle (N_i, \E_i)\rangle$.
Thus, by our hypothesis, it follows that
$$\softprod_{i \in I} (F_i,\E_i) \, \softcap \, \softprod_{i \in I} (A_i, \E_i)
\, \softnotequal \nullsoftsetG{\prod_{i \in I} \E_i} $$
which, by Proposition \ref{pro:distributivepropertyofsoftproductrespecttosoftintersection}, is equivalent to
$$\softprod_{i \in I} \left( (F_i,\E_i) \softcap (A_i, \E_i) \right)
\, \softnotequal \nullsoftsetG{\prod_{i \in I} \E_i} $$
and hence, by Corollary \ref{cor:softnullityofsoftproduct}, it follows in particular that
$$(F_j,\E_j) \softcap (A_j, \E_j) \, \softnotequal \nullsoftsetG{E_j} $$
i.e.
$$(F_j,\E_j) \softcap (N_j, \E_j) \, \softnotequal \nullsoftsetG{E_j} .$$
Thus, by Definition \ref{def:softadherentpoint}, we have that
$\left( (x_j)_{\alpha_j} , \E_j \right)$ is a soft adherent point for the soft set $(F_j,\E_j)$
and so, by Proposition \ref{pro:softclosureisthesetofsoftadherentpoints}, that
$$\left( (x_j)_{\alpha_j} , \E_j \right) \softin \softcl[X_j]{(F_j,\E_j)}, \;\; \text{ for any fixed } j \in I$$
that, by Proposition \ref{pro:softpointsoftbelongingtosoftproduct}, is equivalent to say that
$$\left( x_\alpha , \prod_{i \in I} \E_i \right) \softin \,
\softprod_{i \in I} \softcl[X_i]{(F_i,\E_i)}$$
and, by using Proposition \ref{pro:softinclusionbysoftpoint}, this proves that
$$\softclpar[\prod_{i \in I} X_i]{ \softprod_{i \in I} (F_i,\E_i)} \softsubseteq \,
\softprod_{i \in I} \softcl[X_i]{(F_i,\E_i)} \, . $$

On the other hand, let $\left( x_\alpha , \prod_{i \in I} \E_i \right) \softin \,
\softprod_{i \in I} \softcl[X_i]{(F_i,\E_i)}$.
By Proposition \ref{pro:softpointsoftbelongingtosoftproduct}, we have that
$\left( (x_i)_{\alpha_i} , \E_i \right) \softin \softcl[X_i]{(F_i,\E_i)}$
for every $i \in I$.
Let us consider a soft open set $\left( N, \prod_{i \in I} \E_i \right)$ of $\prod_{i \in I} X_i$ such that
$\left( x_\alpha , \prod_{i \in I} \E_i \right) \softin \left( N, \prod_{i \in I} \E_i \right)$.
By Propositions \ref{pro:characterizationofsoftopenbase} and \ref{pro:softopenbaseofsofttopologicalproduct}
and Definition \ref{def:softbase},
we have that there exists a finite family of soft open sets $(N_{i_k}, \E_{i_k} )\in \Tau_{i_k}$
with $k=1,\ldots n$ and $n \in \NN^*$ such that
$$\left( x_\alpha , \prod_{i \in I} \E_i \right) \softin \,
\langle (N_{i_1}, \E_{i_1}), \ldots (N_{i_n}, \E_{i_n}) \rangle
\, \softsubseteq \left( N, \prod_{i \in I} \E_i \right) .$$
Since, by Proposition \ref{pro:softnslabassoftproduct}, we have that
$$\langle (N_{i_1}, \E_{i_1}), \ldots (N_{i_n}, \E_{i_n}) \rangle
\, \softequal \, \softprod_{i \in I} (A_i, \E_i)$$
where
$$
(A_i, \E_i) = \left\{
\begin{array}{ll}
(N_{i_k}, \E_{i_k})
& \text{ if } i=i_k \text{ for some } k=1,\ldots n
\\[2mm]
\absolutesoftsetG{X_i}{\E_i} & \text{ otherwise}
\end{array}
\right. ,
$$
it follows that
$$\left( x_\alpha , \prod_{i \in I} \E_i \right) \softin \,
\softprod_{i \in I} (A_i, \E_i)
\, \softsubseteq \left( N, \prod_{i \in I} \E_i \right) .$$
Now, we claim that
$$
\softprod_{i \in I} (A_i, \E_i) \, \softcap \, \softprod_{i \in I} (F_i, \E_i)
\, \softnotequal \nullsoftsetG{\prod_{i \in I} \E_i} .
$$
In fact, for every $k=1,\ldots n$, we have that $\left( (x_{i_k})_{\alpha_{i_k}} , \E_{i_k} \right) \in \Tau_{i_k}$
and so, being $\left( (x_{i_k})_{\alpha_{i_k}} , \E_{i_k} \right) \softin \softcl[X_{i_k}]{(F_{i_k},\E_{i_k})}$,
by Proposition \ref{pro:softclosureisthesetofsoftadherentpoints} and Definition \ref{def:softadherentpoint},
it follows that
$$\left( A_{i_k}, \E_{i_k} \right) \softcap \left( F_{i_k}, \E_{i_k} \right) \softequal
\left( N_{i_k}, \E_{i_k} \right) \softcap \left( F_{i_k}, \E_{i_k} \right)
\softnotequal \nullsoftsetG{E_{i_k}}$$
while, for every $i \in I \setminus \left\{ i_1, \ldots i_n \right\}$,
by Proposition \ref{pro:propertiesunionandintersection}(6), it trivially results
$$\left(A_i, \E_i \right) \softcap \left( F_i, \E_i \right) \softequal
\absolutesoftsetG{X_i}{\E_i} \softcap \left( F_i, \E_i \right) \softequal
\left( F_i, \E_i \right)
\softnotequal \nullsoftsetG{E_i}$$
and so the previous assertion follows from Proposition \ref{cor:softnullityofsoftproduct}.

Thus, a fortiori, we have that
$$
\left( N, \prod_{i \in I} \E_i \right) \softcap \;
\softprod_{i \in I} (F_i, \E_i)
\, \softnotequal \nullsoftsetG{\prod_{i \in I} \E_i}
$$
which, by Definition \ref{def:softadherentpoint} and Proposition \ref{pro:softclosureisthesetofsoftadherentpoints},
means that
$$\left( x_\alpha , \prod_{i \in I} \E_i \right) \softin \,
\softclpar[\prod_{i \in I} X_i]{ \softprod_{i \in I} (F_i,\E_i)} $$
and hence, by Proposition \ref{pro:softinclusionbysoftpoint}, we have
$$
\softprod_{i \in I} \softcl[X_i]{(F_i,\E_i)}
\, \softsubseteq \,
\softclpar[\prod_{i \in I} X_i]{ \softprod_{i \in I} (F_i,\E_i)}
$$
that concludes our proof.
\end{proof}

\section{Soft Embedding Lemma}

\begin{definition}{\rm\cite{aras}}
\label{def:softhomeomorphism}
Let $(X,\Tau,\E)$ and $(X',\Tau',\E')$ be two soft topological spaces
over the universe sets $X$ and $X'$ with respect to the sets of parameters $\E$ and $\E'$, respectively.
We say that a soft mapping $\varphi_\psi : \SSG{X}{\E} \to \SSG{X'}{\E'}$
is a \df{soft homeomorphism} if it is soft continuous, bijective
and its soft inverse mapping $\varphi_\psi^{-1} : \SSG{X'}{\E'} \to \SSG{X}{\E} $
is soft continuous too.
In such a case, the soft topological spaces $(X,\Tau,\E)$ and $(X',\Tau',\E')$
are said \df{soft homeomorphic} and we write that $(X,\Tau,\E) \softhomeomorphic (X',\Tau',\E')$.
\end{definition}

\begin{definition}
\label{def:softembedding}
Let $(X,\Tau,\E)$ and $(X',\Tau',\E')$ be two soft topological spaces.
We say that a soft mapping $\varphi_\psi : \SSG{X}{\E} \to \SSG{X'}{\E'}$
is a \df{soft embedding} if its corestriction
$\varphi_\psi : \SSG{X}{\E} \to \varphi_\psi \left( \SSG{X}{\E} \right)$
is a soft homeomorphism.
\end{definition}

\begin{definition}{\rm\cite{aras}}
\label{def:softclosedmapping}
Let $(X,\Tau,\E)$ and $(X',\Tau',\E')$ be two soft topological spaces.
We say that a soft mapping $\varphi_\psi : \SSG{X}{\E} \to \SSG{X'}{\E'}$
is a \df{soft closed mapping} if the soft image of every soft closed set of $(X,\Tau,\E)$
is a soft closed set of $(X',\Tau',\E')$, that is if for any $(C,\E) \in \sigma(X,\E)$,
we have $\varphi_\psi (C,\E) \in \sigma(X',\E')$.
\end{definition}

\begin{proposition}
\label{pro:characterizationsoftembedding}
Let $\varphi_\psi : \SSG{X}{\E} \to \SSG{X'}{\E'}$ be a soft mapping
between two soft topological spaces $(X,\Tau,\E)$ and $(X',\Tau',\E')$.
If $\varphi_\psi$ is a soft continuous, injective and soft closed mapping
then it is a soft embedding.
\end{proposition}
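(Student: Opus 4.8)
The plan is to unravel Definition \ref{def:softembedding} and show that the corestriction $g = \varphi_\psi : \SSG{X}{\E} \to \varphi_\psi\left( \SSG{X}{\E} \right)$, where the soft image $\varphi_\psi\left( \SSG{X}{\E} \right)$ is regarded as a soft topological subspace of $(X',\Tau',\E')$, is a soft homeomorphism. By Definition \ref{def:softhomeomorphism} this amounts to checking three things: that $g$ is soft continuous, that $g$ is bijective, and that its soft inverse mapping $g^{-1}$ is soft continuous.

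First I would observe that soft continuity of $g$ is immediate from Proposition \ref{pro:softcontinuouscorestriction}, since $g$ is precisely the corestriction of the soft continuous mapping $\varphi_\psi$ to its own soft image. Next I would settle the bijectivity of $g$: surjectivity onto $\varphi_\psi\left( \SSG{X}{\E} \right)$ is automatic for a corestriction, while injectivity follows from the injectivity of $\varphi_\psi$ together with Corollary \ref{cor:injectivityofasoftmapping}, which guarantees that distinct soft points have distinct soft images. Being bijective, $g$ then admits a soft inverse mapping $g^{-1}$ in the sense of Definition \ref{def:softinversemapping}.

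The crux of the argument is the soft continuity of $g^{-1}$, and here I would exploit the hypothesis that $\varphi_\psi$ is soft closed. By Proposition \ref{pro:characterizationofsoftcontinuitybysoftclosedsets}, $g^{-1}$ is soft continuous if and only if, for every soft closed set $(C,\E) \in \sigma(X,\E)$ of the domain, the soft inverse image $\left( g^{-1} \right)^{-1}(C,\E)$ is soft closed in $\varphi_\psi\left( \SSG{X}{\E} \right)$. Since $g$ is bijective, $\left( g^{-1} \right)^{-1} = g$, so this soft inverse image coincides (also via Remark \ref{rem:imageosoftinversemapping}) with the soft image $g(C,\E) \softequal \varphi_\psi(C,\E)$. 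Now $\varphi_\psi$ being a soft closed mapping yields, by Definition \ref{def:softclosedmapping}, that $\varphi_\psi(C,\E) \in \sigma(X',\E')$ is soft closed in the whole space $(X',\Tau',\E')$. Finally, because $\varphi_\psi(C,\E)$ is soft contained in the soft image $\varphi_\psi\left( \SSG{X}{\E} \right)$, it is a sub soft set of a soft closed set of $(X',\Tau',\E')$ and hence, by Proposition \ref{pro:softclosedsetsinasoftsubspace}, soft closed with respect to the soft subspace. This establishes the soft continuity of $g^{-1}$ and completes the proof.

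The step I expect to be the main obstacle is the careful identification of the soft image under the corestriction with a soft closed set of the subspace topology: one must keep track of the fact that the codomain of $g$ carries the soft relative topology inherited from $X'$, and verify via Remark \ref{rem:imageosoftinversemapping} and Proposition \ref{pro:softclosedsetsinasoftsubspace} that closedness in $X'$ descends to closedness in the soft image. The remaining steps are essentially formal consequences of the cited propositions.
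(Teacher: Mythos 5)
Your proposal is correct and follows essentially the same route as the paper: corestrict to the soft image, note continuity and bijectivity, and then obtain continuity of the soft inverse by identifying $\left(g^{-1}\right)^{-1}(C,\E)$ with $\varphi_\psi(C,\E)$ and invoking the soft closedness hypothesis together with Proposition \ref{pro:characterizationofsoftcontinuitybysoftclosedsets}. The only (harmless) divergence is in the last step: the paper first argues that $\varphi_\psi^{-1}$ is soft continuous into $\SSG{X'}{\E'}$ and then restricts via Proposition \ref{pro:softcontinuousrestriction}, whereas you land directly in the soft relative topology through Proposition \ref{pro:softclosedsetsinasoftsubspace}, which is arguably the more careful handling of the subspace.
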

\begin{proof}
If we consider the soft mapping
$\varphi_\psi : \SSG{X}{\E} \to \varphi_\psi \left( \SSG{X}{\E} \right)$,
by hypothesis and Proposition \ref{pro:softcontinuouscorestriction},
it immediately follows that it is a soft continuous bijective mapping
and so we have only to prove that its soft inverse mapping
$\varphi_\psi^{-1} = \left( \varphi^{-1} \right)_{\psi^{-1}}
: \varphi_\psi \left( \SSG{X}{\E} \right) \to \SSG{X}{\E}$ is continuous too.
In fact, because the bijectiveness of the corestriction and Remark \ref{rem:imageosoftinversemapping},
for every soft closed set $(C,\E) \in \sigma(X,\E)$, the soft inverse image
of the $(C,\E)$ under the soft inverse mapping $\varphi_\psi^{-1}$
coincides with the soft image of the same soft set under the soft mapping $\varphi_\psi$,
that is $\left(\varphi_\psi^{-1}\right)^{-1}\!\!(C,\E) \, \softequal \, \varphi_\psi(C,\E)$ and
since by hypothesis $\varphi_\psi$ is soft closed, it follows that
$\left(\varphi_\psi^{-1}\right)^{-1}\!\!(C,\E) \in \sigma(X',\E')$
which, by Proposition \ref{pro:characterizationofsoftcontinuitybysoftclosedsets},
proves that $\varphi_\psi^{-1} : \SSG{X'}{\E'} \to \SSG{X}{\E}$
is a soft continuous mapping,
and so, by Proposition \ref{pro:softcontinuousrestriction},
we finally have that $\varphi_\psi^{-1} : \varphi_\psi \left( \SSG{X}{\E} \right) \to \SSG{X}{\E}$
is a soft continuous mapping.
\end{proof}


\begin{definition}
\label{def:softdiagonalmapping}
Let $(X,\Tau,\E)$ be a soft topological space over a universe set $X$
with respect to a set of parameter $\E$,
let $\left\{ (X_i, \Tau_i, \E_i ) \right\}_{i \in I}$ be a family of soft topological spaces
over a universe set $X_i$ with respect to a set of parameters $\E_i$, respectively
and consider a family $\left\{ (\varphi_\psi)_i \right\}_{i\in I}$
of soft mappings $(\varphi_\psi)_i = {\left( \varphi_i \right)}_{\psi_i}: \SSG{X}{\E} \to \SSG{X_i}{\E_i}$
induced by the mappings $\varphi_i : X \to X_i$ and $\psi_i : \E \to \E_i$ (with $i \in I$).
Then the soft mapping
$\Delta = \varphi_\psi : \SSG{X}{\E} \to \SSG{\prod_{i \in I} X_i}{\prod_{i \in I} \E_i} $
induced by the diagonal mappings (in the classical meaning)
$\varphi = \Delta_{i \in I} \varphi_i : X \to \prod_{i \in I} X_i$ on the universes sets
and $\psi  = \Delta_{i \in I} \psi_i: \E \to \prod_{i \in I} \E_i$ on the sets of parameters
(respectively defined by $\varphi(x) = \langle \varphi_i(x) \rangle_{i \in I}$ for every $x \in X$
and by $\psi(e) = \langle \psi_i(e) \rangle_{i \in I}$ for every $e \in \E$)
is called the \df{soft diagonal mapping} of the soft mappings $(\varphi_\psi)_i$ (with $i\in I$)
and it is denoted by
$\Delta = \Delta_{i \in I} (\varphi_\psi)_i : \SSG{X}{\E} \to \SSG{\prod_{i \in I} X_i}{\prod_{i \in I} \E_i} $.
\end{definition}

The following proposition establishes a useful relation about the soft image of a soft diagonal mapping.

\begin{proposition}%
{\rm\cite{nordo2019prod}}
\label{pro:softimageofasoftdiagonalmapping}
Let $(X,\Tau,\E)$ be a soft topological space over a universe set $X$ with respect to a set of parameter $\E$,
let $(F,\E) \in \SSG{X}{\E}$ be a soft set of $X$,
let $\left\{ (X_i, \Tau_i, \E_i ) \right\}_{i \in I}$ be a family of soft topological spaces
over a universe set $X_i$ with respect to a set of parameters $\E_i$, respectively and let
$\Delta = \Delta_{i \in I} (\varphi_\psi)_i : \SSG{X}{\E} \to \SSG{\prod_{i \in I} X_i}{\prod_{i \in I} \E_i} $
be the soft diagonal mapping of the soft mappings $(\varphi_\psi)_i$, with $i\in I$.
Then the soft image of the soft set $(F,\E)$ under the soft diagonal mapping $\Delta$ is soft contained
in the soft product of the soft images of the same soft set under the soft mappings $(\varphi_\psi)_i$, that is
$$\Delta (F,\E) \, \softsubseteq \, \softprod_{i\in I} (\varphi_\psi)_i(F,\E) .$$
\end{proposition}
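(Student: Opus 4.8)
The plan is to establish the soft inclusion pointwise by means of Proposition \ref{pro:softinclusionbysoftpoint}: it suffices to take an arbitrary soft point $\left(p_\gamma, \prod_{i \in I} \E_i\right) \softin \Delta(F,\E)$, written with $p = \langle p_i \rangle_{i \in I} \in \prod_{i \in I} X_i$ and $\gamma = \langle \gamma_i \rangle_{i \in I} \in \prod_{i \in I} \E_i$, and to verify that it also soft belongs to $\softprod_{i\in I} (\varphi_\psi)_i(F,\E)$.

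First I would unpack the membership of this soft point in the soft image. By Definition \ref{def:softpointsoftbelongstosoftset}, the relation $\left(p_\gamma, \prod_{i \in I} \E_i\right) \softin \Delta(F,\E)$ means that $p \in \Delta(F)(\gamma)$, and since $\Delta = \varphi_\psi$ is induced by the diagonal mappings $\varphi = \Delta_{i \in I}\varphi_i$ and $\psi = \Delta_{i \in I}\psi_i$, Definition \ref{def:softmapping} yields $p \in \bigcup\left\{\varphi(F(e)) : e \in \psi^{-1}(\{\gamma\})\right\}$. Hence there exist a parameter $e \in \E$ with $\psi(e) = \gamma$ and an element $x \in F(e)$ such that $\varphi(x) = p$. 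Reading these two equalities component-wise through Definition \ref{def:softdiagonalmapping} gives $\psi_i(e) = \gamma_i$ and $\varphi_i(x) = p_i$ for every $i \in I$, while the witness $x \in F(e)$ furnishes a genuine soft point $(x_e,\E) \softin (F,\E)$.

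Next I would push this soft point forward along each component soft mapping. Since $(x_e,\E) \softin (F,\E)$, Corollary \ref{cor:monotonicsoftimagesandsoftinverseimages}(1) applied to $(\varphi_\psi)_i$ gives $(\varphi_\psi)_i(x_e,\E) \softin (\varphi_\psi)_i(F,\E)$, while Proposition \ref{pro:softimageofasoftpoint} identifies the left-hand soft point as $(\varphi_\psi)_i(x_e,\E) \softequal \left((\varphi_i(x))_{\psi_i(e)}, \E_i\right) \softequal \left((p_i)_{\gamma_i}, \E_i\right)$. Thus $\left((p_i)_{\gamma_i}, \E_i\right) \softin (\varphi_\psi)_i(F,\E)$ holds for every $i \in I$, which by Proposition \ref{pro:softpointsoftbelongingtosoftproduct} is precisely the statement $\left(p_\gamma, \prod_{i \in I}\E_i\right) \softin \softprod_{i\in I}(\varphi_\psi)_i(F,\E)$. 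A final appeal to Proposition \ref{pro:softinclusionbysoftpoint} then delivers the claimed inclusion.

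I expect the only genuinely delicate step to be the pulling back of the image soft point carried out in the second paragraph, namely extracting from $p \in \Delta(F)(\gamma)$ a \emph{single} pair $(x,e)$ with $x \in F(e)$ and $\psi(e) = \gamma$ realizing all components at once. This is exactly the place where the inclusion may fail to be an equality: the soft diagonal mapping forces one common parameter $e$ and one common point $x$ to account for every coordinate simultaneously, whereas an arbitrary soft point of $\softprod_{i\in I}(\varphi_\psi)_i(F,\E)$ is permitted to realize its $i$-th coordinate through a parameter and a point that depend on $i$. Once this single witness is secured, the remaining steps reduce to routine invocations of the cited results.
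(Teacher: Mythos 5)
Your proof is correct, and it is organized in the opposite direction from the paper's. The paper argues by contradiction starting from a soft point $(x_\alpha,\E)\softin(F,\E)$ whose image is assumed to lie outside the soft product, and then derives a contradiction with Corollary \ref{cor:monotonicsoftimagesandsoftinverseimages}(1) via Propositions \ref{pro:softimageofasoftpoint} and \ref{pro:softpointsoftbelongingtosoftproduct}; as written, that argument only establishes that $\Delta(x_\alpha,\E)\softin\softprod_{i\in I}(\varphi_\psi)_i(F,\E)$ for every soft point of $(F,\E)$, and it leaves implicit the fact that every soft point of the soft image $\Delta(F,\E)$ actually arises as $\Delta(x_\alpha,\E)$ for some $(x_\alpha,\E)\softin(F,\E)$ --- which is needed before Proposition \ref{pro:softinclusionbysoftpoint} can be applied to the inclusion as stated. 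You instead start from an arbitrary soft point of $\Delta(F,\E)$ and extract, from Definition \ref{def:softmapping}, a single witness pair $(x,e)$ with $x\in F(e)$ and $\psi(e)=\gamma$; this is precisely the step the paper glosses over, and your reading of it component-wise through the diagonal mappings is exactly what makes the rest routine. The two proofs use the same supporting results, but your direct version is the more complete one, and your closing remark correctly identifies why the inclusion is generally strict: the product allows the witnesses to vary with $i$, while the diagonal image does not.
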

\begin{proof}
Set $\varphi = \Delta_{i \in I} \varphi_i : X \to \prod_{i \in I} X_i$
and $\psi  = \Delta_{i \in I} \psi_i: \E \to \prod_{i \in I} \E_i$,
by Definition \ref{def:softdiagonalmapping},
we know that $\Delta = \Delta_{i \in I} (\varphi_\psi)_i =\varphi_\psi$.
Suppose, by contradiction, that there exists some soft point $(x_\alpha, \E) \softin (F,\E)$
such that
$$\Delta (x_\alpha, \E) \, \softnotin \, \softprod_{i\in I} (\varphi_\psi)_i(F,\E) .$$
Set $\left( y_\beta, \prod_{i \in I} \E_i \right) \softequal \,
\Delta(x_\alpha, \E) \, \softequal \, \varphi_\psi (x_\alpha, \E)$,
by Proposition \ref{pro:softimageofasoftpoint}, it follows that
$$
\left( y_\beta, \prod_{i \in I} \E_i \right) \softequal
\left( \varphi(x)_{\psi(\alpha)}, \prod_{i \in I} \E_i \right)
$$
where
$$y = \langle y_i \rangle_{i \in I} = \varphi(x) = \left( \Delta_{i \in I} \varphi_i \right)(x)
= \langle \varphi_i(x) \rangle_{i \in I}$$
and
$$\beta = \langle \beta_i \rangle_{i \in I} = \psi(\alpha) = \left( \Delta_{i \in I} \psi_i \right)(\alpha)
= \langle \psi_i(\alpha) \rangle_{i \in I} \, . $$
So, set $(G_i,\E_i) \, \softequal \, (\varphi_\psi)_i(F,\E)$ for every $i \in I$, we have that
$$\left( y_\beta, \prod_{i \in I} \E_i \right)  \softnotin \, \softprod_{i\in I} (G_i,\E_i)$$
hence, by Proposition \ref{pro:softpointsoftbelongingtosoftproduct}, it follows that there exists some $j \in I$
such that
$$\left( (y_j)_{\beta_j} , \E_j \right) \softnotin \, (G_j, \E_j) $$
that, by Definition \ref{def:softpointsoftbelongstosoftset}, means
$$ y_j \notin G_j(\beta_j)$$
i.e.
$$ \varphi_j(x) \notin G_j\left( \psi_j(\alpha) \right)$$
and so, by using again Definition \ref{def:softpointsoftbelongstosoftset}, we have
$$\left( {\varphi_j(x)}_{\psi_j(\alpha)} , \E_j \right) \softnotin \, (G_j,\E_j)$$
that, by Proposition \ref{pro:softimageofasoftpoint}, is equivalent to
$$(\varphi_\psi)_j (x_\alpha, \E) \, \softnotin \, (G_j,\E_j)$$
which is a contradiction because we know that $(x_\alpha, \E) \softin (F,\E)$
and by Corollary \ref{cor:monotonicsoftimagesandsoftinverseimages}(1)
it follows $(\varphi_\psi)_j (x_\alpha, \E) \, \softin \, (\varphi_\psi)_j (F,\E)
\, \softequal \, (G_j,\E_j)$.
\end{proof}


\begin{definition}
\label{def:familyofsofttmappingseparatingsoftpoints}
Let $\left\{ (\varphi_\psi)_i \right\}_{i\in I}$ be a family of
soft mappings $(\varphi_\psi)_i : \SSG{X}{\E} \to \SSG{X_i}{\E_i}$
between a soft topological space $(X,\Tau,\E)$
and the members of a family of soft topological spaces
$\left\{ (X_i, \Tau_i, \E_i ) \right\}_{i \in I}$.
We say that the family $\left\{ (\varphi_\psi)_i \right\}_{i\in I}$
\df{soft separates soft points} of $(X,\Tau,\E)$
if for every $(x_\alpha, \E), (y_\beta, \E) \in \SPE[X]$ such that
$(x_\alpha, \E) \softnotequal (y_\alpha, \E)$ there exists some $j \in I$ such that
$(\varphi_\psi)_j (x_\alpha, \E) \softnotequal (\varphi_\psi)_j (y_\beta, \E)$.
\end{definition}

\begin{definition}
\label{def:familyofsofttmappingseparatingsoftpointsfromsoftclosedsets}
Let $\left\{ (\varphi_\psi)_i \right\}_{i\in I}$ be a family of
soft mappings $(\varphi_\psi)_i : \SSG{X}{\E} \to \SSG{X_i}{\E_i}$
between a soft topological space $(X,\Tau,\E)$
and the members of a family of soft topological spaces $\left\{ (X_i, \Tau_i, \E_i ) \right\}_{i \in I}$.
We say that the family $\left\{ (\varphi_\psi)_i \right\}_{i\in I}$
\df{soft separates soft points from soft closed sets} of $(X,\Tau,\E)$
if for every $(C,\E) \in \sigma(X,\E)$
and every $(x_\alpha, \E) \in \SPE[X]$ such that
$(x_\alpha, \E) \softin \absolutesoftsetG{X}{\E} \softsetminus (C,\E)$
there exists some $j \in I$ such that
$(\varphi_\psi)_j (x_\alpha, \E) \softnotin \softclpar[X_j]{(\varphi_\psi)_j (C,\E) }$.
\end{definition}

\begin{proposition}[\textbf{Soft Embedding Lemma}]
\label{pro:softembeddinglemma}
Let $(X,\Tau,\E)$ be a soft topological space,
$\left\{ (X_i, \Tau_i, \E_i ) \right\}_{i \in I}$ be a family of soft topological spaces
and $\left\{ (\varphi_\psi)_i \right\}_{i\in I}$ be a family
of soft continuous mappings $(\varphi_\psi)_i : \SSG{X}{\E} \to \SSG{X_i}{\E_i}$
that separates both the soft points
and the soft points from the soft closed sets of $(X,\Tau,\E)$.
Then the soft diagonal mapping $\Delta = \Delta_{i \in I} (\varphi_\psi)_i : \SSG{X}{\E}
\to \SSG{\prod_{i \in I} X_i}{\prod_{i \in I} \E_i} $
of the soft mappings $(\varphi_\psi)_i$ is a soft embedding.
\end{proposition}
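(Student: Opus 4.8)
The plan is to show that the soft diagonal mapping $\Delta = \Delta_{i \in I}(\varphi_\psi)_i$ is soft continuous, injective, and that its corestriction onto $\varphi_\psi(\SSG{X}{\E})$ is a soft open mapping; together these make the corestriction a soft homeomorphism, i.e. $\Delta$ a soft embedding. The first two properties follow quickly from the hypotheses, while the openness of the corestriction — where the separation of soft points from soft closed sets is used — is the heart of the matter.

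First I would establish soft continuity. Writing $\Delta = \varphi_\psi$ with $\varphi = \Delta_{i\in I}\varphi_i$ and $\psi = \Delta_{i\in I}\psi_i$, Definition \ref{def:softcompositionofsoftmappings} gives $(\pi_\rho)_i \softcirc \Delta = (\varphi_\psi)_i$ for every $i \in I$, since $\pi_i \circ \varphi = \varphi_i$ and $\rho_i \circ \psi = \psi_i$. As each $(\varphi_\psi)_i$ is soft continuous by hypothesis, Proposition \ref{pro:characterizationofsoftcontinuityonsoftotpologicalproduct} yields soft continuity of $\Delta$. For injectivity I would take $(x_\alpha,\E) \softnotequal (y_\beta,\E)$; the separation of soft points provides some $j$ with $(\varphi_\psi)_j(x_\alpha,\E) \softnotequal (\varphi_\psi)_j(y_\beta,\E)$, and since by Proposition \ref{pro:softimageofasoftpoint} this distinctness forces $\varphi_j(x)\ne\varphi_j(y)$ or $\psi_j(\alpha)\ne\psi_j(\beta)$, the $j$-th coordinates of $\varphi(x),\varphi(y)$ or of $\psi(\alpha),\psi(\beta)$ differ, whence $\Delta(x_\alpha,\E)\softnotequal\Delta(y_\beta,\E)$; Corollary \ref{cor:injectivityofasoftmapping} then gives injectivity.

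The main step is to prove that the corestriction $\Delta : \SSG{X}{\E} \to \varphi_\psi(\SSG{X}{\E})$ is soft open, which by Proposition \ref{pro:softcontinuouscorestriction}, Remark \ref{rem:imageosoftinversemapping}, and Proposition \ref{pro:characterizationofsoftcontinuitybysoftopensets} (applied to the inverse of the corestriction) is exactly what remains to obtain a soft homeomorphism. Fix a soft open set $(U,\E)\in\Tau$ and a soft point $(y_\gamma,\prod_{i\in I}\E_i)\softin\Delta(U,\E)$; by injectivity and Proposition \ref{pro:softimageofasoftpoint} it arises from a unique $(x_\alpha,\E)\softin(U,\E)$ with $y=\varphi(x)$ and $\gamma=\psi(\alpha)$. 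Setting $(C,\E) = \absolutesoftsetG{X}{\E}\softsetminus(U,\E)$, a soft closed set satisfying $(x_\alpha,\E)\softin\absolutesoftsetG{X}{\E}\softsetminus(C,\E)$, the separation of soft points from soft closed sets supplies an index $j$ with $(\varphi_\psi)_j(x_\alpha,\E)\softnotin\softclpar[X_j]{(\varphi_\psi)_j(C,\E)}$. I would then take $(V_j,\E_j) = \absolutesoftsetG{X_j}{\E_j}\softsetminus\softclpar[X_j]{(\varphi_\psi)_j(C,\E)}$ and consider the soft slab $\langle(V_j,\E_j)\rangle = (\pi_\rho)_j^{-1}(V_j,\E_j)$, which is soft open in the product by Proposition \ref{pro:softopensubbaseofsofttopologicalproduct}.

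Two claims then close the argument. By Propositions \ref{pro:softslabassoftproduct} and \ref{pro:softpointsoftbelongingtosoftproduct}, membership in the slab is controlled by the $j$-th coordinate, so $(y_\gamma,\prod_{i\in I}\E_i)\softin\langle(V_j,\E_j)\rangle$ because $(\pi_\rho)_j\big(\Delta(x_\alpha,\E)\big) = (\varphi_\psi)_j(x_\alpha,\E)\softin(V_j,\E_j)$. For the reverse containment, any soft point of the image lying in the slab equals $\Delta(w_\mu,\E)$ for some $(w_\mu,\E)$, since every soft point of $\Delta(\absolutesoftsetG{X}{\E})$ is the soft image of a soft point of $X$; its $j$-th coordinate $(\varphi_\psi)_j(w_\mu,\E)$ avoids $\softclpar[X_j]{(\varphi_\psi)_j(C,\E)}$, so were $(w_\mu,\E)\softin(C,\E)$, Corollary \ref{cor:monotonicsoftimagesandsoftinverseimages}(1) together with Proposition \ref{pro:propertiesofsoftclosure}(3) would force $(\varphi_\psi)_j(w_\mu,\E)$ into that soft closure, a contradiction; hence $(w_\mu,\E)\softin(U,\E)$ and $\Delta(w_\mu,\E)\softin\Delta(U,\E)$. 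Thus the slab meets the image inside $\Delta(U,\E)$, giving $(y_\gamma,\prod_{i\in I}\E_i)$ a subspace-soft-open neighbourhood contained in $\Delta(U,\E)$; as this holds for every such soft point, $\Delta(U,\E)$ is soft open in $\varphi_\psi(\SSG{X}{\E})$. I expect the main obstacle to be precisely this openness step: correctly pairing the soft-point/soft-closed-set separation with a soft slab and keeping the parameter bookkeeping straight so that each soft point of the image genuinely descends to a soft point of $X$ on which the separation hypothesis bites, after which soft continuity and injectivity render $\Delta$ a soft embedding.
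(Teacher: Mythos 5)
Your argument is correct, and while the first two thirds coincide with the paper's proof (soft continuity of $\Delta$ via Proposition \ref{pro:characterizationofsoftcontinuityonsoftotpologicalproduct} and injectivity via the soft-point separation are handled exactly as the author does), the final step takes a genuinely different route. The paper proves that $\Delta$ is a \emph{soft closed mapping} into the ambient product: for $(C,\E)\in\sigma(X,\E)$ and a soft point with $\Delta(x_\alpha,\E)\softnotin\Delta(C,\E)$, it pushes the separation hypothesis through Proposition \ref{pro:softimageofasoftdiagonalmapping} and the product-of-closures formula of Proposition \ref{pro:softclosureofasoftproduct} to get $\Delta(x_\alpha,\E)\softnotin\softclpar[\prod_{i\in I}X_i]{\Delta(C,\E)}$, and then invokes Proposition \ref{pro:characterizationsoftembedding}. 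You instead prove that the corestriction is a \emph{soft open} mapping in the classical Engelking style: apply the separation hypothesis to the complement $(C,\E)$ of the given soft open set and trap each soft point of $\Delta(U,\E)$ in the trace on the image of the soft slab $\langle(V_j,\E_j)\rangle$, where $(V_j,\E_j)$ is the complement of $\softclpar[X_j]{(\varphi_\psi)_j(C,\E)}$. Your route buys two things: it dispenses entirely with Proposition \ref{pro:softclosureofasoftproduct}, the heaviest ingredient of the paper's proof, and it asserts openness only \emph{relative to the image}, which is exactly what a soft homeomorphism onto the image requires. The paper's intermediate claim that $\Delta$ is soft closed into the whole product is strictly stronger and cannot hold in that generality (already in the crisp special case a non-closed embedding such as the identity $(0,1)\to\RR$ separates points and points from closed sets); indeed the paper's contradiction argument only tests soft points of $\softclpar[\prod_{i\in I}X_i]{\Delta(C,\E)}$ that lie in the image of $\Delta$, so your version is in this respect the safer one. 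The price you pay is the extra bookkeeping of checking that every soft point of $\Delta\absolutesoftsetG{X}{\E}$ descends to a soft point of $X$ (true, since the approximations of a soft image are unions of $\varphi$-images over fibres of $\psi$) and that a soft set admitting a subspace-soft-open neighbourhood soft contained in it around each of its soft points is soft open (a soft union argument via Proposition \ref{pro:softinclusionbysoftpoint}); you handle both correctly.
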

\begin{proof}
Let $\varphi = \Delta_{i \in I} \varphi_i$, $\psi  = \Delta_{i \in I} \psi_i$
and $\Delta = \Delta_{i \in I} (\varphi_\psi)_i = \varphi_\psi$ as in Definition \ref{def:softdiagonalmapping},
for every $i\in I$, by using Definition \ref{def:softcompositionofsoftmappings},
we have that every corresponding soft composition is given by
$$(\pi_\rho)_i \, \softcirc \, \Delta
= \left( (\pi_i)_{\rho_i} \right) \, \softcirc \, \varphi_\psi
= \left( \pi_i \circ \varphi \right)_{\rho_i \circ \psi}
= \left( \varphi_i \right)_{\psi_i}
 = (\varphi_\psi)_i$$
which, by hypothesis, is a soft continuous mapping.
Hence, by Proposition \ref{pro:characterizationofsoftcontinuityonsoftotpologicalproduct},
it follows that the soft diagonal mapping
$\Delta : \SSG{X}{\E} \to \SSG{\prod_{i \in I} X_i}{\prod_{i \in I} \E_i} $
is a soft continuous mapping.

Now, let $(x_\alpha, \E)$ and $(y_\beta, \E)$ be two distinct soft points of $\SPE[X]$.
Since, by hypothesis, the family $\left\{ (\varphi_\psi)_i \right\}_{i\in I}$
of soft mappings soft separates soft points,
by Definition \ref{def:familyofsofttmappingseparatingsoftpoints}, we have that
there exists some $j \in I$ such that
$(\varphi_\psi)_j (x_\alpha, \E) \softnotequal (\varphi_\psi)_j (y_\beta, \E)$,
that is
$$
\left( \varphi_j \right)_{\psi_j} (x_\alpha, \E) \, \softnotequal \,
\left( \varphi_j \right)_{\psi_j} (y_\beta, \E) .
$$
Hence, by Proposition \ref{pro:softimageofasoftpoint}, we have that:
$$
\left( {\varphi_j(x)}_{\psi_i(\alpha)}, \E_j \right) \softnotequal
\left( {\varphi_j(y)}_{\psi_i(\beta)}, \E_j \right)
$$
and so, by the Definition \ref{def:distinctssoftpoints} of distinct soft points,
it necessarily follows that:
$$
\varphi_j(x) \ne \varphi_j(y) \quad \text{ or } \quad \psi_j(\alpha) \ne \psi_j(\beta) .
$$
Since $\varphi = \Delta_{i \in I} \varphi_i : X \to \prod_{i \in I} X_i$
and $\psi  = \Delta_{i \in I} \psi_i: \E \to \prod_{i \in I} \E_i$ are usual diagonal mappings,
we have that:
$$
\varphi(x) \ne \varphi(y) \quad \text{ or } \quad \psi(\alpha) \ne \psi(\beta) \;
$$
and, by Definition \ref{def:distinctssoftpoints}, it follows that:
$$
\left( {\varphi(x)}_{\psi(\alpha)} , \prod_{i \in I} \E_i \right) \softnotequal
\left( {\varphi(y)}_{\psi(\beta)} , \prod_{i \in I} \E_i \right)
$$
hence, applying again Proposition \ref{pro:softimageofasoftpoint}, we get:
$$
\varphi_\psi (x_\alpha, \E) \, \softnotequal \,
\varphi_\psi (y_\beta, \E)
$$
that is:
$$
\Delta_{i \in I} (\varphi_\psi)_i (x_\alpha, \E) \, \softnotequal \,
\Delta_{i \in I} (\varphi_\psi)_i (y_\beta, \E)
$$
i.e. that $\Delta (x_\alpha, \E) \, \softnotequal \, \Delta (y_\beta, \E)$
which, by Corollary \ref{cor:injectivityofasoftmapping},
proves the injectivity of the soft diagonal mapping
$\Delta : \SSG{X}{\E} \to \SSG{\prod_{i \in I} X_i}{\prod_{i \in I} \E_i} $.

Finally, let $(C,\E) \in \sigma(X,\E)$ be a soft closed set in $X$
and, in order to prove that the soft image $\Delta(C,\E)$
is a soft closed set of $\sigma \! \left( \prod_{i \in I} X_i , \prod_{i \in I} \E_i \right)$,
consider a soft point  $(x_\alpha, \E) \in \SPE[X]$ such that
$\Delta (x_\alpha, \E) \softnotin \Delta(C,\E)$ and, hence,
by Corollary \ref{cor:monotonicsoftimagesandsoftinverseimages}(1), such that $(x_\alpha, \E) \softnotin (C,\E)$.
Since, by hypothesis, the family $\left\{ (\varphi_\psi)_i \right\}_{i\in I}$
of soft mappings soft separates soft points from soft closed sets,
by Definition \ref{def:familyofsofttmappingseparatingsoftpointsfromsoftclosedsets},
we have that there exists some $j \in I$ such that
$(\varphi_\psi)_j (x_\alpha, \E) \, \softnotin \, \softclpar[X_j]{(\varphi_\psi)_j (C,\E) }$,
that is:
$$
(\varphi_j)_{\psi_j} (x_\alpha, \E) \, \softnotin \, \softclpar[X_j]{(\varphi_\psi)_j (C,\E) }
$$
that, by Proposition \ref{pro:softimageofasoftpoint}, corresponds to:
$$
\left( {\varphi_j(x)}_{\psi_j(\alpha)}, \E_j \right) \, \softnotin \,
\softclpar[X_j]{(\varphi_\psi)_j (C,\E) } \, .
$$

So, set $(C_i, \E_i) \, \softequal \, \softclpar[X_i]{(\varphi_\psi)_i (C,\E)} $ for every $i \in I$, we have
in particular for $i=j$ that
$$
\left( {\varphi_j(x)}_{\psi_j(\alpha)}, \E_j \right) \softnotin \, (C_j, \E_j)
$$
which, by Definition \ref{def:softpointsoftbelongstosoftset}, is equivalent to say that:
$$
\varphi_j(x) \notin C_j \left( \psi_j(\alpha) \right)
$$
and since the diagonal mapping $\varphi = \Delta_{i \in I} \varphi_i : X \to \prod_{i \in I} X_i$
on the universes sets is defined by $\varphi(x) = \langle \varphi_i(x) \rangle_{i \in I}$, it follows that:
$$
\varphi(x) \notin \prod_{i \in I} C_i \left( \psi_i(\alpha) \right) .
$$
Now, since the diagonal mapping $\psi = \Delta_{i \in I} \psi_i : X \to \prod_{i \in I} X_i$
on the sets of parameters is defined by
$\psi(\alpha) = \Delta_{i \in I} \psi_i (\alpha) = \langle \psi_i(\alpha) \rangle_{i \in I}$,
using Definition \ref{def:softproductofsoftsets}, we obtain:
$$
\prod_{i \in I} C_i \left( \psi_i(\alpha) \right)
= \left( \prod_{i \in I} C_i \right) \left( \psi(\alpha) \right)
$$
and hence that
$$\varphi(x) \notin \left( \prod_{i \in I} C_i \right) \left( \psi(\alpha) \right)$$
which, by Definitions \ref{def:softpointsoftbelongstosoftset}
and \ref{def:softproductofsoftsets}, is equivalent to say that:
$$\left( \varphi(x)_{\psi(\alpha)}, \prod_{i \in I} \E_i \right) \softnotin \, \softprod_{i \in I} (C_i, \E_i)$$
that, by Proposition \ref{pro:softimageofasoftpoint}, means:
$$\varphi_\psi (x_\alpha, \E) \, \softnotin \,\softprod_{i \in I} (C_i, \E_i)$$
i.e.
$$\Delta (x_\alpha, \E) \, \softnotin \, \softprod_{i \in I} \softclpar[X_i]{(\varphi_\psi)_i (C,\E)} .$$
So, recalling, by Proposition \ref{pro:softclosureofasoftproduct}, that
$$\softclpar[\prod_{i \in I} X_i]{ \softprod_{i \in I} (\varphi_\psi)_i (C,\E)}
\softequal \, \softprod_{i \in I} \softclpar[X_i]{(\varphi_\psi)_i (C,\E)}$$
it follows that:
$$\Delta (x_\alpha, \E) \, \softnotin \, \softclpar[\prod_{i \in I} X_i]{ \softprod_{i \in I} (\varphi_\psi)_i (C,\E)} . $$
Since, by Propositions \ref{pro:softimageofasoftdiagonalmapping} and
\ref{pro:propertiesofsoftclosure}(3) we have
$$\Delta (C,\E) \, \softsubseteq \, \softprod_{i\in I} (\varphi_\psi)_i(C,\E)
\, \softsubseteq \,
\softclpar[\prod_{i \in I} X_i]{\softprod_{i\in I} (\varphi_\psi)_i(C,\E)}
$$
and, by applying Propositions \ref{pro:sofsoftclosureofoperators}(1)
and \ref{pro:propertiesofsoftclosure}(5), we obtain
$$
\softclpar[\prod_{i \in I} X_i]{\Delta (C,\E)}  \, \softsubseteq \,
\softclpar[\prod_{i \in I} X_i]{\softprod_{i\in I} (\varphi_\psi)_i(C,\E)}
$$
it follows, a fortiori, that
$$\Delta (x_\alpha, \E) \, \softnotin \, \softclpar[\prod_{i \in I} X_i]{\Delta (C,\E)} .$$
So, it is proved by contradiction that
$\softclpar[\prod_{i \in I} X_i]{\Delta (C,\E)} \softsubseteq \, \Delta (C,\E)$
and hence, by Proposition \ref{pro:propertiesofsoftclosure}(4)
and Definition \ref{def:softclosedmapping}, that
$\Delta : \SSG{X}{\E} \to \SSG{\prod_{i \in I} X_i}{\prod_{i \in I} \E_i} $
is a soft closed mapping.

Thus, we finally have that the soft diagonal mapping
$\Delta = \Delta_{i \in I} (\varphi_\psi)_i : \SSG{X}{\E} \to \SSG{\prod_{i \in I} X_i}{\prod_{i \in I} \E_i} $
is a soft continuous, injective and soft closed mapping
and so, by Proposition \ref{pro:characterizationsoftembedding},
it is a soft embedding.
\end{proof}


\section{Conclusion}
In this paper we have introduced the notions of family of soft mappings
separating points and points from closed sets and that of soft diagonal mapping
and we have proved a generalization to soft topological spaces of the well-known Embedding Lemma
for classical (crisp) topological spaces.
Such a result could be the start point for extending and investigating other important topics
such as extension and compactifications theorems, metrization theorems etc. in the context of soft topology.


\vskip 2cm

\parskip=0pt {\noindent {\sc Giorgio NORDO \newline
MIFT - Dipartimento di Scienze Matematiche e Informatiche, scienze Fisiche e scienze della Terra -- Universit\`{a} di Messina \newline
Viale F. Stagno D'Alcontres, 31 --
Contrada Papardo, salita Sperone, 98166 Sant'Agata -- Messina
(ITALY)}} \vskip 1pt \noindent
E-mail:  {\tt giorgio.nordo@unime.it}

\end{document}